\date{}
\newtheorem{thm}{Theorem}[section]
\newtheorem{ex}{Example}[section]
\newtheorem{rem}{Remark}[section]
 \newtheorem{prop}{Proposition}[section]
\newtheorem{lemme}{Lemma}[section]
\newtheorem{cor}{Corollary}[section]
\newcommand{\Sp}{\mathbb S}
\newcommand{\R}{\mathbb R}
\newcommand{\N}{\mathbb N}
\DeclareMathOperator*{\essinf}{ess\,inf}
\begin{document}

\title[Spectrum of the Laplacian with weights]
{Spectrum of the Laplacian with weights }

\author{Bruno Colbois}
\address{ Universit\'e de Neuch\^atel, Laboratoire de
Math\'ematiques, 13 rue E. Argand, 2007 Neuch\^atel, Switzerland.}
\email{Bruno.Colbois@unine.ch}

\author{Ahmad El Soufi}
\address{Universit\'e de Tours, Laboratoire de Math\'ematiques
et Physique Th\'eorique, UMR-CNRS 7350, Parc de Grandmont, 37200
Tours, France.} \email{elsoufi@univ-tours.fr.}

\thanks{}

\begin{abstract}
Given a compact Riemannian manifold $(M,g)$ and two positive functions $\rho$ and $\sigma$, we are interested in the eigenvalues of  the Dirichlet energy functional weighted by $\sigma$,  with respect to the $L^2$ inner product  weighted by $\rho$. Under some regularity conditions on $\rho$ and $\sigma$, these eigenvalues are those of the operator
$
-\rho^{-1} \mbox{div}(\sigma \nabla u)
$
with  Neumann conditions on the boundary if $\partial M\ne \emptyset$.
We investigate the effect of the weights on eigenvalues and
discuss the existence of lower and upper bounds under the condition that the total mass is preserved.  

\end{abstract}

\subjclass[2010]{35P15, 58J50}
\keywords{eigenvalue, Laplacian, density, Cheeger inequality, upper bounds}

\maketitle

\section{Introduction}

Let $(M,g)$ be a compact Riemannian manifold of dimension $n\ge 2$, possibly with  nonempty boundary. We designate by $\left\{\lambda_k(M,g)\right\}_{k\ge 0}$ the nondecreasing sequence of eigenvalues of the Laplacian on $(M,g)$ under  Neumann conditions on the boundary   if $\partial M\ne \emptyset$.  The min-max principle tells us that these eigenvalues are variationally defined by
$$\lambda_k(M,g)=\inf_{E\in S_{k+1}} \sup_{u\in E\setminus \{ 0\} } \frac{\int_M |\nabla u|^2 v_g}{\int_M  u^2  v_g} $$
where $S_k$ is the set of all $k$-dimensional vector subspaces of  $H^1(M)$ and $v_g$ is the Riemannian volume element associated with  $ g $.  

\smallskip
The relationships between the eigenvalues $\lambda_k(M,g)$ and the other geometric data of $(M,g)$ constitute a classical topic of research that has been widely investigated in recent decades (the monographs \cite{Berard1, BGM,  Chavel1, Henrot, Schoen-Yau} are among basic references on this subject). 
 In the present work we are interested in eigenvalues of ``weighted" energy functionals  with respect to ``weighted" $L^2$ inner products. Our aim is to investigate the interplay between the geometry of $(M,g)$ and the effect of  the weights. 
 
\smallskip
Therefore, let   $\rho $ and $\sigma $ be two positive continuous functions on $M$ and consider the Rayleigh quotient  
$$R_{(g,\rho,\sigma)}(u)=\frac{\int_M |\nabla u|^2 \sigma\, v_g}{\int_M  u^2 \rho \, v_g}.  $$
The corresponding eigenvalues are given by 
\begin{equation} \label{eq-1}
\mu_k^g(\rho,\sigma)=\inf_{E\in S_{k+1}} \sup_{u\in E\setminus \{ 0\} } R_{(g,\rho,\sigma)}(u).
\end{equation}
Under some regularity conditions on $\rho$ and $\sigma$, $\mu_k^g(\rho,\sigma)$ is the $k$-th eigenvalue of 
the  problem 
\begin{equation} \label{eq0}
-\mbox{div}(\sigma \nabla u)=\mu \rho u \qquad{\rm in }\ M
\end{equation}
with  Neumann conditions on the boundary if $\partial M\ne \emptyset$.
 Here $\nabla$ and $\mbox{div}$ are the gradient and the divergence associated with the Riemannian metric $g$. When there is no risk of confusion, we will simply write $\mu_k(\rho,\sigma)$ for $\mu_k^g(\rho,\sigma)$. 
 
\smallskip
Notice that the numbering of eigenvalues starts from zero. It is clear that the infimum of  $R_{(g,\rho,\sigma)}(u)$ is achieved by constant functions, hence  $\mu_0^g(\rho,\sigma)=0$ and  
\begin{equation} \label{intro0}
\mu_1^g(\rho,\sigma)= \inf_{\int_M u\rho v_g=0}  R_{(g,\rho,\sigma)}(u).
\end{equation}

\smallskip
One obviously has  $\mu_k^g(1,1)=\lambda_k(M,g)$. When $\sigma=1$, the eigenvalues $\mu_k(\rho,1)$ correspond to the situation where $M$ has a non necessarily constant mass density $\rho$ and describe, in dimension 2, the vibrations of  a  non-homogeneous membrane (see \cite{Laugesen, Henrot} and the references therein). The eigenvalues $\mu_k(1,\sigma)$ are those of the operator $\mbox{div}(\sigma \nabla u)$ associated with a conductivity  $\sigma$ on $M$ (see \cite[Chapter 10]{Henrot} and \cite{Astala}). In the case where $\rho=\sigma$, the eigenvalues $\mu_k(\rho,\rho)$ are those of the  Witten Laplacian  $L_\rho$ (see  \cite{CES} and the references therein).
Finally, when $\sigma$ and $\rho$ are related by  $\sigma=\rho^{\frac{n-2}n}$, the corresponding eigenvalues  $\mu_k^g(\rho,\rho^{\frac{n-2}n})$ are exactly those of the Laplacian associated with the conformal metric $\rho^{\frac2n}g$, that is $\mu_k^g(\rho,\rho^{\frac{n-2}n})=\lambda_k(M,\rho^{\frac2n}g)$.
 
 \smallskip
Our goal in this paper is to investigate the behavior of $\mu_k^g(\rho,\sigma)$, especially in the most significant cases mentioned above,
under normalizations that we will specify in the sequel, but  which essentially consist in the preservation of the total mass. The last case, corresponding to conformal changes of metrics, has been widely investigated in recent decades (see for instance \cite{ CE,  GNY, asma1, jammes3, Kokarev, Korevaar, Petrides2, Petrides1}) and most of the questions we will address in this paper are motivated by results established in the conformal setting.  
These questions can be listed as follows:  

\begin{enumerate}
\item  Can one redistribute the mass density $\rho$ (resp. the conductivity  $\sigma$) so that the corresponding eigenvalues become as small as desired?

\item  Can one redistribute $\rho$ and/or $\sigma$ so that the eigenvalues become as large as desired?

\item    If Question (1) (resp. (2)) is answered positively, what kind of constraint can one impose in order to get upper or lower bounds for the eigenvalues?

\item   If Question (1) (resp. (2)) is answered negatively, what are the geometric quantities that bound the eigenvalues?

\item   If the eigenvalues are bounded, what can one say about   their extremal values?

\item   Is it possible,  in some specific situations, to compute or to have sharp estimates for the first  positive eigenvalues?
\end{enumerate}

\smallskip

In a preliminary section we deal with some technical issues concerning the possibility of relaxing the conditions of regularity and positivity of the densities. In the process, we prove a 2-dimensional convergence result  (Theorem \ref{casesurfaces}) which completes a   theorem that Colin de Verdière had established in dimension $n\ge 3$  .  Question (1) is discussed at the beginning of   Section \ref{below} where we show that it is possible to fix one of the densities $\rho$ and $\sigma$ and vary the other one, among densities preserving the  total mass, in order to produce arbitrarily small eigenvalues (Theorem \ref{muinf}).  This leads us to get into Question (3) that we tackle by establishing the following  Cheeger-type inequality (Theorem \ref{Cheeger}):
$$
\mu_1(\rho,\sigma) \ge \frac{1}{4} h_{\sigma,\sigma}(M)h_{\rho,\sigma}(M)
$$
 where  $h_{\sigma,\sigma}(M)$ and $h_{\rho,\sigma}(M)$ are suitably defined isoperimetric constants, in the spirit of what is done in \cite{jammes1}. 

Whenever  a Cheeger-type inequality is proved, a natural question is to investigate a possible reverse inequality under some geometric restrictions (see \cite{Buser} and the introduction of  \cite{Milman} for a general presentation of this issue). It turns out that in the present situation, such a reverse inequality cannot be obtained without additional assumptions on the densities. Indeed, we prove that  on any given Riemannian manifold, there exists families 
 of  densities such that the associated Cheeger constants are as small as desired  while the corresponding eigenvalues are uniformly bounded from below (Theorem \ref{Buser-type}).

\smallskip
Questions (2) and (4) are addressed in Section \ref{above}.  A.  Savo and the authors
 have proved in \cite{CES}  that the first positive eigenvalue $\mu_1(\rho,\rho)$  of the Witten Laplacian   is not bounded above  as $\rho$ runs over densities of fixed total mass. In Proposition \ref{unbound} we prove that, given a  Riemannian metric $g_0$,  we can find a metric $g$, within the set of  metrics conformal to $g_0$ and of the same volume as $g_0$, and a density $\rho$, among densities of fixed total mass with respect to $g_0$, so that  $\mu_1^g(\rho,1)$ is as large as desired. The same also holds for $\mu_1^g(1,\sigma)$.

\smallskip
However, if instead of requiring that the total mass of the densities is fixed with respect to  $g_0$, we assume that it is fixed  with respect to $g$, then the situation changes completely. Indeed, 
Theorem \ref{bound-rho1} below gives   the following estimate when 
$M$ is  a  domain of  a complete Riemannian manifold $(\tilde M,g_0)$ whose Ricci curvature satisfies $Ric_{g_0}\ge -(n-1)$ (including the case $M=\tilde M$ if $\tilde M$ is compact): For every  metric $g$ conformal to $g_0$ and every  density 
$\rho $ on $M$ with $\int_{M}\rho v_g =\vert M\vert_g$, one has 
\begin{equation}\label{intro1}
 \mu_k^g(\rho,1)  \le  \frac 1{\vert M\vert_g^{\frac 2n}} \left(A_nk^{\frac 2n} + B_n \vert M\vert_{g_0}^{\frac 2n} \right),
\end{equation}
where $\vert \ .\ \vert_{g}$ and $\vert \ . \ \vert_{g_0}$ denote the Riemannian volumes with respect to $g$ and $g_0$, respectively, and $A_n$ and $B_n$ are two constants which depend only on the dimension $n$.

\smallskip
A direct consequence of this theorem  is the following inequality satisfied by any density 
$\rho $ on $(M,g)$ with $\int_{M}\rho v_g =\vert M\vert_g$:
\begin{equation}\label{intro2}
 \mu_k^g(\rho,1)  \le A_n\left(\frac k{\vert M\vert_{g}}\right)^\frac 2n+ B_n \mbox{ric}_0 
\end{equation}
where $\mbox{ric}_0$ is a positive number such that $Ric_{g}\ge-(n-1)\mbox{ric}_0\ g$  (see Corollary \ref{bound-rho}).

\smallskip
Regarding the eigenvalues $ \mu_k^g(1,\sigma)$, we are able to prove an estimate of the same type as \eqref{intro2}: 
For every positive density 
$\sigma $ on $(M,g)$ with $\int_{M}\sigma v_g =\vert M\vert_g$ one has (Theorem \ref{bound-sigma}) 
\begin{equation}\label{intro3}
\mu_k^g(1,\sigma) \le A_n \left(\frac k{\vert M\vert_{g}}\right)^\frac 2n + B_n \mbox{ric}_0,  
\end{equation}
where $A_n$ and $B_n$ are two constants which depend only on the dimension $n$. It is worth noting that although the estimates \eqref{intro2} and \eqref{intro3} are similar, their proofs are  of different nature. That is why we were not able to decide whether  a stronger   estimate such as \eqref{intro1} holds for $\mu_k^g(1,\sigma)$.

\smallskip
When $M$ is a bounded domain of a manifold $(\tilde M,\tilde g)$ of nonnegative Ricci curvature (e.g.  $\R^n$), the inequalities \eqref{intro2} and \eqref{intro3} give the following estimates that can be seen as extensions of Kröger's inequalitiy  \cite{Kro}:  $\mu_k^g(\rho,1)  \le A_n\left(\frac k{\vert M\vert_{g}}\right)^\frac 2n $  and $\mu_k^g(1,\sigma) \le A_n \left(\frac k{\vert M\vert_{g}}\right)^\frac 2n $, provided that $\int_{M}\rho v_g =\vert M\vert_g$ and $\int_{M}\sigma v_g =\vert M\vert_g$.  Notice that if we follow Kröger's approach, then we  get an upper bound of $\mu_k^g(\rho,1)$ which involves the gradient of $\rho$ and the integral of  $\frac 1\rho$ (see \cite{EHIS}).

\smallskip
According to  \eqref{intro2} and \eqref{intro3}, it is  natural to introduce the following  {\it  extremal eigenvalues} on a given Riemannian manifold $(M,g)$:
$$
\mu_k^*(M,g)=\sup_{\fint_M\rho \, v_g=1}\mu_k^g(\rho,1) \quad 
\mbox{and}
\quad
\mu_k^{**}(M,g)=\sup_{\fint_M\sigma v_g=1}\mu_k^g(1,\sigma)$$
In section \ref{extremal} we  investigate the qualitative properties of these quantities in the spirit of what we did in \cite{CE} for the {\it conformal spectrum}, thereby  providing some answers to Question (5).
For example, when $M$ is of dimension 2, we have the following lower estimate (see \cite[Corollary 1]{CE}):
$$\mu_k^*(M,g)\ge 8\pi \frac k{\vert M\vert_g}.$$
This means that, given any Riemannian surface $(M,g)$, endowed with the constant mass disribution $\rho=1$ (whose eigenvalues can be  very close to zero), it is always possible to redistribute the mass density $\rho$ so that the resulting eigenvalue $ \mu_k^g(\rho,1) $ is greater or equal to    $8\pi \frac k{\vert M\vert_g}$.

\smallskip
It turns out that this phenomenon is specific to the dimension 2. Indeed, we prove (Theorem \ref{inf_sup_rho}) that  on any 
 compact manifold $M$ of dimension $n\ge 3$, there exists  a 1-parameter family of Riemannian metrics $g_\varepsilon$ of volume $1$ such that
$$\mu_k^*(M,g_\varepsilon)\le C k\varepsilon^{\frac {n-2}n},$$
 where C is a constant which does not depend on $\varepsilon$.
This means that in dimension $n\ge 3$, there exist geometric situations that generate very small eigenvalues, regardless of how the mass density is distributed.  

\smallskip
Regarding the extremal eigenvalues $ \mu_k^{**}(M,g)$, a  similar result is proved (Theorem   \ref{inf_sup_sigma}) which is, moreover, also valid in dimension 2. 

\smallskip
Note however that  it is possible to construct examples of  Riemannian manifolds $(M,g)$ with very small eigenvalues (for the constant densities), for which $\mu_k^*(M,g)$ and $ \mu_k^{**}(M,g)$) are sufficiently large (see Proposition \ref{small_big}). 

\smallskip
The last part of the paper (Section \ref{examples}) is devoted to the study of the first extremal eigenvalues $\mu_1^*$ and $ \mu_1^{**}$. We give sharp estimates  of these quantities  for some standard examples or under  strong symmetry  assumptions.

\section{Preliminary results} \label{technicalfact}
This section is dedicated to some preliminary technical results. The reason is that in order to construct examples and counter-examples, it is often more convenient to use densities that are non smooth or which  vanish somewhere in the manifold.  The key arguments  used in the proof of these results rely on the method developed  by Colin de Verdière in \cite{colin}. 

\smallskip
Let $(M,g)$ be a compact Riemannian manifold, possibly with boundary.

\begin{prop}\label{smoothing} Let $\rho\in L^\infty(M)$ and $\sigma\in C^0(M)$ be two positive  densities on  $M$. For every $N\in\N^*$, 
there exist two sequences of smooth positive densities $\rho_p$ and $\sigma_p$ such that, $\forall k\le N$,
$$\mu_k(\rho_p,\sigma_p)\to \mu_k(\rho,\sigma)$$ 
as $p\to\infty$.
\end{prop}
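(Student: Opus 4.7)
The plan is to combine mollification of the densities with a min-max convergence argument in the spirit of Colin de Verdi\`ere. Write $c_0=\min\sigma>0$ and $c_1=\essinf\rho>0$. Using a partition of unity subordinate to a finite cover of $M$ by coordinate charts, standard convolution produces smooth positive sequences $\sigma_p\to\sigma$ uniformly on $M$ and $\rho_p\to\rho$ almost everywhere and in $L^q(M,v_g)$ for every $q<\infty$, with $c_0/2\le\sigma_p\le 2\|\sigma\|_\infty$ and $c_1/2\le\rho_p\le 2\|\rho\|_\infty$ once $p$ is large. The upper bound $\limsup_p\mu_k(\rho_p,\sigma_p)\le\mu_k(\rho,\sigma)$ then follows from \eqref{eq-1}: for $\varepsilon>0$, pick a $(k+1)$-dimensional subspace $E\subset H^1(M)$ almost realizing the min-max for $\mu_k(\rho,\sigma)$; since any fixed $u\in H^1(M)$ has $u^2$ in some $L^s$ with $s>1$ by Sobolev embedding, the uniform bounds on $\rho_p,\sigma_p$ together with a.e.\ convergence give, via dominated convergence, $\int_M u^2\rho_p\,v_g\to\int_M u^2\rho\,v_g$ and $\int_M|\nabla u|^2\sigma_p\,v_g\to\int_M|\nabla u|^2\sigma\,v_g$. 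Compactness of the unit sphere of the finite-dimensional space $E$ makes these convergences uniform in $u\in E$, so $\sup_{u\in E\setminus\{0\}}R_{(g,\rho_p,\sigma_p)}(u)\to\sup_{u\in E\setminus\{0\}}R_{(g,\rho,\sigma)}(u)$ and the min-max principle gives the claim.

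For the lower bound $\liminf_p\mu_k(\rho_p,\sigma_p)\ge\mu_k(\rho,\sigma)$, take for each $p$ eigenfunctions $u_p^0,\dots,u_p^N$ of the smoothed problem, orthonormal in $L^2(\rho_p v_g)$. The previous step ensures that $\mu_j(\rho_p,\sigma_p)$ is uniformly bounded for $j\le N$; combined with $\sigma_p\ge c_0/2$ and the upper bound on $\rho_p$, this yields a uniform $H^1$-bound on each $u_p^j$. Rellich compactness and diagonal extraction then produce a subsequence along which $u_p^j\to u^j$ weakly in $H^1(M)$ and strongly in $L^2(M)$. Strong $L^2$-convergence together with the uniform boundedness and a.e.\ convergence of $\rho_p$ give, by dominated convergence, the $L^2(\rho v_g)$-orthonormality of $\{u^j\}_{j\le N}$, so this family spans an $(N+1)$-dimensional subspace of $H^1(M)$. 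For a fixed linear combination $u=\sum_{i\le j}a_i u^i$ and its approximant $u_p=\sum_{i\le j}a_iu_p^i$, rewriting the weighted Dirichlet energy as $\|\sqrt{\sigma_p}\,\nabla u_p\|_{L^2}^2$ and using the uniform convergence $\sqrt{\sigma_p}\to\sqrt\sigma$ with the weak convergence $\nabla u_p\rightharpoonup\nabla u$ in $L^2$, one obtains $\int_M|\nabla u|^2\sigma\,v_g\le\liminf_p\int_M|\nabla u_p|^2\sigma_p\,v_g$, while $\int_M u^2\rho\,v_g=\lim_p\int_M u_p^2\rho_p\,v_g$ by the same dominated-convergence argument as above. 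Applying \eqref{eq-1} to the limit subspace spanned by $u^0,\dots,u^j$ then yields $\mu_j(\rho,\sigma)\le\liminf_p\mu_j(\rho_p,\sigma_p)$ for every $j\le N$.

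The delicate step is this lower-bound argument, where one must pass to the limit simultaneously in the eigenfunctions and in the weights. The two technical linchpins are the uniform positive lower bound $\sigma_p\ge c_0/2$, which both produces the $H^1$-compactness required for Rellich and preserves weak lower semi-continuity of the weighted Dirichlet energy, and the uniform upper bound on $\rho_p$, which combined with strong $L^2$-convergence of the eigenfunctions allows dominated convergence to pass to the limit in the orthonormality relations $\int_M u_p^iu_p^j\rho_p\,v_g=\delta_{ij}$. Both properties are built in at the mollification stage, which is precisely what the hypotheses $\sigma\in C^0$, $\rho\in L^\infty$ with $\rho,\sigma>0$ are engineered to allow; any relaxation (e.g.\ densities that vanish or blow up) would require substantially more delicate truncation and cutoff arguments of the type developed in \cite{colin}.
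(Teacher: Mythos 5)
Your proof is correct, and the mollification set-up (smooth approximants with the uniform two-sided bounds $\tfrac{c_1}{2}\le\rho_p\le 2\|\rho\|_\infty$, $\tfrac{c_0}{2}\le\sigma_p$) is exactly the one used in the paper. Where you diverge is in how the convergence of the eigenvalues is then obtained: the paper simply observes that the quadratic forms $q_p(u)=\int_M|\nabla u|^2\sigma_p v_g$ and norms $\|u\|_p^2=\int_M u^2\rho_p v_g$ satisfy the hypotheses of the abstract convergence theorem of Colin de Verdi\`ere (\cite[Theorem I.8]{colin}) --- for which it additionally arranges the one-sided domination $\sigma\le\sigma_p$ by adding $\|\sigma_p-\sigma\|_\infty$ --- whereas you carry out the two-sided min--max argument by hand: dominated convergence on a fixed almost-optimal finite-dimensional test space for the $\limsup$, and Rellich compactness of the approximate eigenfunctions plus weak lower semicontinuity of $\|\sqrt{\sigma_p}\,\nabla u_p\|_{L^2}^2$ for the $\liminf$. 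Your route is self-contained and dispenses with the monotonicity $\sigma\le\sigma_p$, at the cost of reproving a compactness argument that the paper reuses as a black box several times in this section (notably in the proof of Proposition \ref{zerorho}). Two points you should make explicit to be fully rigorous: the inequality $\int_M|\nabla u_p|^2\sigma_p v_g\le\mu_j(\rho_p,\sigma_p)\int_M u_p^2\rho_p v_g$ for a combination $u_p=\sum_{i\le j}a_iu_p^i$ of the first $j+1$ eigenfunctions uses the $\sigma_p$-orthogonality of the gradients of eigenfunctions (i.e.\ the weak form of the eigenvalue equation), not just $L^2(\rho_p v_g)$-orthonormality; and to pass from the extracted subsequence to the full $\liminf_p$ you should first restrict to a subsequence realizing that $\liminf$ before invoking Rellich. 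Both are routine.
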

\begin{proof} 
Using standard density results, let $\rho_p$ and $\sigma_p$ be  two sequences of smooth positive densities such that, $\rho_p$ converges to $\rho$ in $L^2(M)$ and $\sigma_p$ converges uniformly towards $\sigma$. Assume furthermore that $\frac 12 \inf \rho\le \rho_p\le 2\sup\rho$ almost everywhere and that (replacing $\sigma_p$ by $\sigma_p+\Vert \sigma_p-\sigma\Vert_\infty $ if necessary) $\sigma\le \sigma_p$ on $M$. Then the sequence of quadratic forms $q_p(u)=\int_M \vert\nabla u\vert^2 \sigma_p v_g$ together with the sequence of norms $\Vert u\Vert_p^2 =\int_M  u^2 \rho_p v_g$ satisfy the assumptions of Theorem I.8 of \cite{colin} which enables us to conclude.
\end{proof}

Let $M_0$ be a domain in  $M$ with $C^1$-boundary  and let $\rho$ be a positive bounded function on $M_0$. In order to state the next result, let us introduce the following quadratic form defined on $H^1(M_0)$:
 $$Q_0(u)=\int_{M_0}\vert\nabla u\vert^2 v_g + \int_{M\setminus M_0}\vert\nabla H(u)\vert^2 v_g $$
 where $H(u)$ is the harmonic extension of $u$ to $M\setminus M_0$, with Neumann condition on  $\partial M\setminus\partial M_0$ if $\partial M\setminus\partial M_0\ne\emptyset$ (i.e. $H(u)$ is harmonic on $M\setminus M_0$, coincides with $u$ on $\partial M_0\setminus\partial M$, and $\frac{\partial H(u)}{\partial \nu}=0$ on $\partial M\setminus\partial M_0$. The function $H(u)$ minimizes $\int_{M\setminus M_0}\vert\nabla v\vert^2 v_g $ among all functions $v$ on $M\setminus M_0$ which coincide with $u$ on $\partial M_0\setminus\partial M$). 
 We denote by $\gamma_k(M_0,\rho)$ the eigenvalues of this quadratic form with respect to the inner product of $L^2(M_0, \rho v_g)$ associated with $\rho$, that is,
 $$\gamma_k(M_0,\rho)= \inf_{E\in S^0_{k+1}} \sup_{u\in E\setminus \{ 0\} }\frac{\int_{M_0}\vert\nabla u\vert^2 v_g + \int_{M\setminus M_0}\vert\nabla H(u)\vert^2 v_g }{\int_{M_0}  u^2 \rho \, v_g}  $$
 where $S^0_k$ is the set of all $k$-dimensional vector subspaces of  $H^1(M_0)$.

\begin{prop}\label{zerorho} Let $M_0\subset M$ be a domain  with $C^1$-boundary and   let $\rho\in L^\infty(M_0)$ be a positive density    with $\essinf_{M_0}\rho >0$. Define, for every  $\varepsilon >0$, the density  $\rho_\varepsilon\in L^\infty(M)$ by 
 \[
\rho_\varepsilon (x)=
  \left\lbrace
  \begin{array}{ll}
   \rho(x)& \quad \text{if } x\in M_0\\
 \varepsilon & \quad \text{otherwise.}\\
  \end{array}
  \right.
 \]
Then, for every positive $k$, 
$\mu_k(\rho_\varepsilon,1)$ converges to $\gamma_k(M_0,\rho)$ as $\varepsilon\to 0$.
\end{prop}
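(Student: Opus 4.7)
The goal is to prove $\lim_{\varepsilon\to 0}\mu_k(\rho_\varepsilon,1)=\gamma_k(M_0,\rho)$ by establishing the two one-sided inequalities separately, both through the min-max principle; without loss of generality I may assume $M_0\subsetneq M$.

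For the upper bound, I would pick $L^2(M_0,\rho v_g)$-orthonormal eigenfunctions $\phi_0,\ldots,\phi_k\in H^1(M_0)$ of $\gamma_0(M_0,\rho),\ldots,\gamma_k(M_0,\rho)$ and extend each to $\Phi_j\in H^1(M)$ by setting $\Phi_j=H(\phi_j)$ on $M\setminus M_0$. Polarizing $Q_0$ and using that $\phi_j$ are its eigenfunctions yields $\int_M \nabla\Phi_j\cdot\nabla\Phi_l\,v_g=\gamma_j\,\delta_{jl}$. For any $u=\sum_{j=0}^k a_j\Phi_j$ the Rayleigh quotient with respect to $\rho_\varepsilon$ becomes
$$R_{(g,\rho_\varepsilon,1)}(u)=\frac{\sum_j a_j^2\,\gamma_j(M_0,\rho)}{\sum_j a_j^2+\varepsilon\int_{M\setminus M_0}\bigl(\sum_j a_j H(\phi_j)\bigr)^2 v_g}\le \gamma_k(M_0,\rho),$$
yielding $\mu_k(\rho_\varepsilon,1)\le \gamma_k(M_0,\rho)$ for every $\varepsilon>0$, and hence the $\limsup$ inequality.

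For the lower bound, let $u_{\varepsilon,0},\ldots,u_{\varepsilon,k}$ be the first $k+1$ eigenfunctions of \eqref{eq0} with density $\rho_\varepsilon$, orthonormal in $L^2(M,\rho_\varepsilon v_g)$. From $\int_M|\nabla u_{\varepsilon,j}|^2 v_g=\mu_{\varepsilon,j}\le \gamma_k(M_0,\rho)$ and $\essinf_{M_0}\rho>0$, the family $(u_{\varepsilon,j}|_{M_0})_\varepsilon$ is bounded in $H^1(M_0)$. The critical step is to show that the Gram matrix $A_\varepsilon=(\int_{M_0}u_{\varepsilon,j}u_{\varepsilon,l}\,\rho\,v_g)_{j,l}$ converges to the identity, which amounts to $\varepsilon\int_{M\setminus M_0}u_{\varepsilon,j}u_{\varepsilon,l}\,v_g\to 0$. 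On $M\setminus M_0$ the eigenfunction equation reads $-\Delta u_{\varepsilon,j}=\mu_{\varepsilon,j}\varepsilon u_{\varepsilon,j}$ with Neumann data on $\partial M\setminus\partial M_0$; splitting $u_{\varepsilon,j}=h_{\varepsilon,j}+w_{\varepsilon,j}$ with $h_{\varepsilon,j}=H(u_{\varepsilon,j}|_{\partial M_0\setminus\partial M})$ and $w_{\varepsilon,j}$ vanishing on the interface, an energy identity combined with the Poincar\'e inequality on $M\setminus M_0$ (available thanks to the Dirichlet trace of $w_{\varepsilon,j}$ on $\partial M_0\setminus\partial M$) gives $\|w_{\varepsilon,j}\|_{H^1(M\setminus M_0)}=O(\varepsilon)$, while $\|h_{\varepsilon,j}\|_{H^1(M\setminus M_0)}$ is uniformly controlled through the trace theorem and elliptic regularity by the bounded $H^1(M_0)$-norms. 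Hence $\int_{M\setminus M_0}u_{\varepsilon,j}^2 v_g=O(1)$ and $A_\varepsilon\to I$.

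With these preparations, the $(k+1)$-dimensional subspace $F_\varepsilon=\mathrm{span}(u_{\varepsilon,j}|_{M_0})_{j=0}^k$ is used as a test family in the min-max for $\gamma_k(M_0,\rho)$. For $u=\sum_j a_j u_{\varepsilon,j}|_{M_0}$, the minimality of the harmonic extension yields
$$Q_0(u)\le \int_M\bigl|\sum_j a_j\nabla u_{\varepsilon,j}\bigr|^2 v_g=\sum_j a_j^2\mu_{\varepsilon,j}\le \mu_{\varepsilon,k}\sum_j a_j^2,$$
whereas $\|u\|^2_{L^2(M_0,\rho v_g)}=a^T A_\varepsilon a\ge (1-o(1))\sum_j a_j^2$. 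Therefore $\gamma_k(M_0,\rho)\le (1+o(1))\mu_{\varepsilon,k}$, and the lower bound follows upon taking $\liminf$. The principal obstacle is the $L^2$-estimate of $u_{\varepsilon,j}$ on $M\setminus M_0$, which rests on the near-harmonicity of the eigenfunctions there and on a uniform Poincar\'e constant for the mixed Dirichlet--Neumann problem on $M\setminus M_0$.
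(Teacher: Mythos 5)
Your argument is correct, but it follows a genuinely different route from the paper. The paper does not argue by two one-sided min-max inequalities; instead it identifies $H^1(M)$ with a space $\mathcal H_\varepsilon$ of pairs $(v_0,v_\infty)$ via the rescaling $u\mapsto(u|_{M_0},\sqrt\varepsilon\,u|_{M\setminus M_0})$, splits $\mathcal H_\varepsilon$ into the subspace of harmonically extended functions and the subspace of functions vanishing on $M_0$, observes that the quadratic form is of order $1/\varepsilon$ on the latter, and then invokes Colin de Verdi\`ere's abstract convergence theorems (Theorems I.7 and I.8 of \cite{colin}) to localize the low spectrum onto the harmonic-extension part and to absorb the $O(\varepsilon)$ perturbation $\varepsilon\int_{M\setminus M_0}H(u)^2v_g$ of the inner product. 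Both proofs hinge on the same technical estimate, namely $\int_{M\setminus M_0}H(u)^2v_g\le C\,\|u\|^2_{H^1(M_0)}$ via the trace on $\Gamma=\partial M_0\setminus\partial M$ (the paper proves it by hand with an auxiliary torsion-type function $\eta$, you invoke the trace theorem and elliptic theory, which is equivalent). What your approach buys is self-containedness and the clean monotone inequality $\mu_k(\rho_\varepsilon,1)\le\gamma_k(M_0,\rho)$ for \emph{every} $\varepsilon>0$, not only in the limit; what it costs is the extra analysis of the eigenfunctions $u_{\varepsilon,j}$ on $M\setminus M_0$, which the abstract machinery avoids. On that last point, one small quantitative remark: your claim $\|w_{\varepsilon,j}\|_{H^1(M\setminus M_0)}=O(\varepsilon)$ is circular as stated, since the energy identity gives $\|\nabla w_{\varepsilon,j}\|_{L^2}\le C\mu_{\varepsilon,j}\,\varepsilon\,\|u_{\varepsilon,j}\|_{L^2(M\setminus M_0)}$ and the right-hand side involves the very quantity being bounded; the cleanest fix is to use the a priori bound $\varepsilon\int_{M\setminus M_0}u_{\varepsilon,j}^2v_g\le 1$, i.e. $\|u_{\varepsilon,j}\|_{L^2(M\setminus M_0)}\le\varepsilon^{-1/2}$, which yields $\|w_{\varepsilon,j}\|_{H^1}=O(\varepsilon^{1/2})$ directly and then, together with the uniform bound on $\|h_{\varepsilon,j}\|_{H^1}$, gives $\int_{M\setminus M_0}u_{\varepsilon,j}^2v_g=O(1)$ and $A_\varepsilon\to I$ as you need. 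With that adjustment the proof is complete.
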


\begin{proof}
The eigenvalues $\mu_k(\rho_\varepsilon,1)$  are those of the quadratic  form  $q(u)=\int_M \vert\nabla u\vert^2 v_g$, $u\in H^{1}(M)$, with respect to the inner product $\Vert u\Vert_\varepsilon^2 = \int_M u^2 \rho_\varepsilon v_g$. Set $M_\infty=M\setminus M_0$ and $\Gamma=\partial M_0\cap\partial M_\infty=\partial M_0\setminus\partial M$. We identify $H^1(M)$ with the space $\mathcal H_\varepsilon =\{v=(v_0,v_\infty)\in H^1(M_0)\times H^1(M_\infty):\ {v_\infty}_{\restriction_{\Gamma}}= \sqrt\varepsilon \,{v_0}_{\restriction_{\Gamma}}\}$ through  the  map  $\Psi_\varepsilon(u)=(u_{\restriction_{M_0}}, \sqrt\varepsilon\, u_{\restriction_{M_\infty}})$. We endow $\mathcal H_\varepsilon$ with the inner product given by $\Vert (v_0,v_\infty)\Vert_\rho^2 = \int_{M_0} v_0^2 \rho v_g+\int_{M_\infty} v_\infty^2 v_g$ and consider   the quadratic form $q_\varepsilon (v_0,v_\infty)=\int_{M_0} \vert\nabla v_0\vert^2 v_g+\frac 1\varepsilon\int_{M_\infty} \vert\nabla v_\infty\vert^2 v_g$, so that, for every $u\in H^1(M)$
 $$\Vert \Psi_\varepsilon (u)\Vert_\rho = \Vert u\Vert_\varepsilon\qquad \mbox{and }\qquad q_\varepsilon(\Psi_\varepsilon(u))=q(u).$$ 
 Therefore, the eigenvalues of the quadratic form $q:H^1(M)\to \R$  with respect to $\Vert \ \Vert_\varepsilon$ (i.e. $\mu_k^g(\rho_\varepsilon,1)$ ) coincide with those of $ q_\varepsilon:\mathcal H_\varepsilon\to \R$ with respect to $\Vert \ \Vert_\rho$.
 
 \smallskip
 The space $\mathcal H_\varepsilon$ decomposes into the direct sum $\mathcal H_\varepsilon=\mathcal K_0^\varepsilon\oplus \mathcal K_\infty^\varepsilon$ with 
  $\mathcal K_0^\varepsilon=\{(v_0,v_\infty)\in \mathcal H_\varepsilon \ :\ v_\infty \mbox{ is harmonic, and  } \frac{\partial  v_\infty}{\partial \nu}=0 \mbox{ on } \partial M\setminus\partial M_0 \mbox{ if }\partial M\setminus\partial M_0\ne\emptyset \}$, and $\mathcal K_\infty^\varepsilon=\{(v_0,v_\infty)\in \mathcal H_\varepsilon \ :\ v_0=0 \}$ (Indeed, $v=(v_0,v_\infty)=(v_0, \sqrt\varepsilon H(v_0))+ (0, v_\infty-\sqrt\varepsilon H(v_0))$). These two subspaces are $q_\varepsilon$-orthogonal and, denoting by $\lambda_1(M_\infty)$ the first eigenvalue of $M_\infty$ under Dirichlet boundary conditions on $\Gamma$  and Neumann boundary conditions on $\partial M_\infty\setminus\Gamma$,  we have, for every $v=(0,v_\infty)\in \mathcal K_\infty$, 
  $$q_\varepsilon(v)= \frac 1\varepsilon\int_{M_\infty} \vert\nabla v_\infty\vert^2 v_g \ge  \frac 1\varepsilon\lambda_1(M_\infty) \int_{M_\infty}  v_\infty^2 v_g =  \frac 1\varepsilon\lambda_1(M_\infty) \Vert v\Vert_\rho^2.$$
 Theorem I.7 of \cite{colin} then implies that, given any integer  $N>0$,  {\it the $N$ first eigenvalues  $\mu_k(\rho_\varepsilon,1)$  of $q_\varepsilon$  on $\mathcal H_\varepsilon $ are, for  sufficiently small $\varepsilon$, as close as desired to the eigenvalues of the restriction of $q_\varepsilon$ on $\mathcal K_0^\varepsilon$.}  
 
We still have to compare the eigenvalues  of   $q_\varepsilon$ on $\mathcal K_0^\varepsilon$, that we denote $\gamma_k(\varepsilon)$,  with the eigenvalues $\gamma_k(M_0,\rho)$ of $Q_0$ on $L^2(M_0,\rho v_g)$. For this, we make use of Theorem I.8 of \cite{colin}. Indeed, $ \mathcal K_0^\varepsilon$ can be identified to $H^1(M_0)$ through $\Psi_\varepsilon^0: u\in H^1(M_0) \mapsto (u, \sqrt\varepsilon H(u))\in \mathcal  K_0^\varepsilon$, which satisfies $\Vert\Psi_\varepsilon^0(u)\Vert_\varepsilon^2=  \int_{M_0}  u^2\rho v_g + \varepsilon\int_{M_\infty}  H(u)^2 v_g$ and $q_\varepsilon(\Psi_\varepsilon^0(u)) = Q_0(u) = \int_{M_0} \vert\nabla u\vert^2 v_g+\int_{M_\infty} \vert\nabla H(u)\vert^2 v_g$. Hence, we are led to compare, on $L^2(M_0)$,  the eigenvalues of the quadratic form $Q_0$ with respect to the following two scalar products:  $\Vert u\Vert_\rho^2 = \int_{M_0}  u^2\rho v_g$ and $\Vert u\Vert_\varepsilon^2=\int_{M_0}  u^2\rho v_g + \varepsilon\int_{M_\infty}  H(u)^2 v_g$. 

\smallskip
Now, since $H(u)$ is a harmonic extension of $u_{\restriction_\Gamma}$ to $M_\infty$, there exists a constant $C$, which does not depend on $\varepsilon$, such that
$\int_{M_\infty}  H(u)^2 v_g \le C \int_{\Gamma} u^2  v_{\bar g}$, where $\bar g$ is the metric induced on $\Gamma$ by $g$. Indeed, let $\eta$ be the solution in  $M_\infty$ of $\Delta \eta=-1$ with $\eta_{\restriction_\Gamma} =0$ and $\frac{\partial\eta}{\partial\nu} =0$ on $\partial M_\infty\setminus\Gamma$. Observe that we have $\eta\ge 0$ (maximum principle and Hopf Lemma) and, since   $\int_{M_\infty}  g(\nabla (\eta H(u)),\nabla H(u))   v_g =0$, $\int_{M_\infty}  g(\nabla \eta,\nabla H(u)^2)  v_g = -2\int_{M_\infty}  \eta \vert \nabla H(u)\vert ^2 v_g\le 0$.  Thus
$$\int_{M_\infty}  H(u)^2 v_g =- \int_{M_\infty}  H(u)^2 \Delta \eta\, v_g=\int_{M_\infty}  g(\nabla \eta,\nabla H(u)^2)  v_g + \int_{\Gamma} u^2 \frac{\partial\eta}{\partial\nu} v_{\bar g}   \le c  \int_{\Gamma} u^2  v_{\bar g} $$
where $c$ is an upper bound of $\frac{\partial\eta}{\partial\nu}$ on $\Gamma$. On the other hand, $  \int_{\Gamma} u^2 v_{\bar g} $ is controlled by $\Vert u\Vert^2_{H^{\frac 12}(\Gamma)}$ which in turn is controlled  (using boundary trace inequalities in $M_0$) by $\Vert u\Vert^2_{H^1(M_0)}$. Finally, there exists a constant $C$ (which depends on $\essinf_{M_0} \rho$ but not on $\varepsilon$) such that $\int_{M_\infty}  H(u)^2 v_g\le C (\int_{M_0}  u^2\rho v_g + \int_{M_0} \vert\nabla u\vert^2 v_g)$ and, then 
$$\Vert u\Vert_\varepsilon^2\le C(\Vert u\Vert_\rho^2 +Q_0(u)).$$
 Since $\Vert u\Vert_\varepsilon^2$ converges to $\Vert u\Vert_\rho^2 $ as $\varepsilon\to 0$, this implies, according to \cite[Theorem I.8]{colin} (see also \cite[Remark 2.14]{jammes2}),  that, for sufficiently small $\varepsilon$, { the $N$ first eigenvalues $\gamma_k(\varepsilon)$ of $Q_0$ with respect to $\Vert\ \Vert_\varepsilon$ are as close as desired to those, $\gamma_k(M_0,\rho)$,  of $Q_0$, with respect to $\Vert\ \Vert_\rho$. }
\end{proof}

Recall that in dimension 2, one has 
\begin{equation}\label{dim2}
\mu_k^g(\rho,1)=\lambda_k(M,\rho g).
\end{equation} 
An immediate consequence of Proposition  \ref{zerorho} is  the following result which completes Theorem III.1 of Colin de Verdière \cite{colin}. 

\begin{thm} \label{casesurfaces} Let $(M,g)$ be a compact Riemannian manifold  of dimension $n\ge2$ and let $M_0\subset M$ be a domain with boundary of class $C^1$. Let $g_{\varepsilon}$ be the a family of Riemannian metrics on $M$, with $g_{\varepsilon}=g$ on $M_0$ and $g_{\varepsilon}=\varepsilon g$ outside $M_0$. Let $k\ge1$. 
\begin{enumerate}
\item (Theorem III.1 of \cite{colin}) If $n\ge 3$, then  $\lambda_k(M,g_{\varepsilon})$ converges to $\lambda_k(M_0,g)$ as $\varepsilon \to 0$

\item If $n=2 $, then $\lambda_k(M,g_{\varepsilon})$ converges to $\gamma_k(M_0, 1)$ as $\varepsilon \to 0$.
\end{enumerate}
\end{thm}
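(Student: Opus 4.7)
The plan is to reduce statement (2) to Proposition \ref{zerorho} by exploiting the conformal invariance of the Dirichlet energy in dimension two. First I would note that by construction, the metric $g_{\varepsilon}$ is globally conformal to $g$: we have $g_{\varepsilon}=\rho_{\varepsilon}\,g$ where
\[
\rho_{\varepsilon}(x)=\begin{cases}1 & \text{if } x\in M_0,\\ \varepsilon & \text{if } x\in M\setminus M_0.\end{cases}
\]
In dimension $n=2$, the identity \eqref{dim2}, namely $\mu_k^g(\rho,1)=\lambda_k(M,\rho g)$, then gives
\[
\lambda_k(M,g_{\varepsilon})=\lambda_k(M,\rho_{\varepsilon}g)=\mu_k^g(\rho_{\varepsilon},1).
\]
So the problem has been transformed from a degenerating-metric problem into a degenerating-density problem of exactly the type covered by Proposition \ref{zerorho}.

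Next I would verify that $\rho_{\varepsilon}$ is an admissible family for Proposition \ref{zerorho}: the restriction $\rho\equiv 1$ on $M_0$ belongs to $L^\infty(M_0)$ and satisfies $\essinf_{M_0}\rho=1>0$, the domain $M_0$ has $C^1$ boundary by hypothesis, and $\rho_{\varepsilon}$ is precisely the extension introduced in the statement of that proposition with the constant value $\varepsilon$ on $M\setminus M_0$. Applying Proposition \ref{zerorho} with $\rho\equiv 1$ therefore yields
\[
\mu_k^g(\rho_{\varepsilon},1)\ \xrightarrow[\varepsilon\to 0]{}\ \gamma_k(M_0,1),
\]
and combining this with the conformal identity above gives $\lambda_k(M,g_{\varepsilon})\to\gamma_k(M_0,1)$, as required.

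There is essentially no obstacle beyond checking that these two reductions compose correctly; the real analytical content has already been absorbed into Proposition \ref{zerorho}, whose proof handled the splitting $\mathcal H_\varepsilon=\mathcal K_0^\varepsilon\oplus\mathcal K_\infty^\varepsilon$ and the comparison of scalar products using harmonic extensions and boundary trace inequalities. The point worth emphasizing conceptually is that the dichotomy with Colin de Verdière's result in the range $n\ge 3$ comes precisely from dimension-two conformality: in higher dimension, scaling $g$ by $\varepsilon$ outside $M_0$ multiplies the volume element by $\varepsilon^{n/2}$ but the norm of gradients by $\varepsilon^{-1}$, so the two effects no longer match up into a pure density problem on the $L^2$ side, which is why the limit in (1) is the genuine Neumann spectrum of $M_0$, while in (2) it is the non-local quantity $\gamma_k(M_0,1)$ that remembers the harmonic extension through the ``frozen'' complement.
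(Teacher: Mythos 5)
Your argument is correct and is exactly the route the paper takes: it presents Theorem \ref{casesurfaces}(2) as an immediate consequence of the identity \eqref{dim2} combined with Proposition \ref{zerorho} applied to $\rho\equiv 1$ on $M_0$ and the extension by $\varepsilon$. Part (1) is simply Colin de Verdi\`ere's theorem, so nothing further is needed.
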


From Proposition \ref{smoothing} and  Proposition \ref{zerorho} we can deduce the following two corollaries:

\begin{cor}\label{zero_rho} Let $\rho\in L^\infty(M_0)$ be a positive density on a  domain $M_0\subset M$ with boundary of class $C^1$. There exists a family of smooth positive densities $\rho_\varepsilon$ on $M$ such that $\int_M \rho_\varepsilon v_g$ tends to $\int_{M_0} \rho v_g$ and, for every $k\in\N^*$, 
$\mu_k(\rho_\varepsilon,1)$ converges to $\gamma_k(M_0,\rho)$ as  $\varepsilon\to 0$. 
\end{cor}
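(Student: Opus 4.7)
The plan is to combine the two preceding propositions via a diagonal extraction: Proposition \ref{zerorho} already produces a family of densities (piecewise defined, not smooth) that realises the desired limits, and Proposition \ref{smoothing} lets us smooth each such density while controlling the first $N$ eigenvalues. The remaining work is to thread a single one-parameter family through this two-parameter process.

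More concretely, first I would set $\hat\rho_\varepsilon\in L^\infty(M)$ equal to $\rho$ on $M_0$ and equal to $\varepsilon$ on $M\setminus M_0$, exactly as in Proposition \ref{zerorho}. Since $\essinf_{M_0}\rho>0$, this is a positive bounded density on $M$. Proposition \ref{zerorho} then gives $\mu_k(\hat\rho_\varepsilon,1)\to \gamma_k(M_0,\rho)$ as $\varepsilon\to 0$ for every $k\ge 1$, and a direct computation yields
$$\int_M \hat\rho_\varepsilon\, v_g=\int_{M_0}\rho\, v_g+\varepsilon\,\vert M\setminus M_0\vert_g\;\longrightarrow\;\int_{M_0}\rho\, v_g.$$
This solves the problem modulo the smoothness requirement.

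Next, for each fixed $\varepsilon>0$, apply Proposition \ref{smoothing} (with $\sigma\equiv 1$) to $\hat\rho_\varepsilon$: for every integer $N$ there exists a sequence of smooth positive densities $\rho_{\varepsilon,p}$ that converges to $\hat\rho_\varepsilon$ in $L^2(M)$ and satisfies $\mu_k(\rho_{\varepsilon,p},1)\to\mu_k(\hat\rho_\varepsilon,1)$ as $p\to\infty$ for all $k\le N$. The $L^2$-convergence also forces $\int_M\rho_{\varepsilon,p}\,v_g\to\int_M\hat\rho_\varepsilon\,v_g$ because $M$ is compact. To convert the two parameters into one, I would take $\varepsilon_n=1/n$ and, for each $n$, choose $p(n)$ large enough that simultaneously
$$\bigl\vert\mu_k(\rho_{\varepsilon_n,p(n)},1)-\mu_k(\hat\rho_{\varepsilon_n},1)\bigr\vert<\frac{1}{n}\quad\text{for all }1\le k\le n,$$
and $\bigl\vert\int_M\rho_{\varepsilon_n,p(n)}\,v_g-\int_M\hat\rho_{\varepsilon_n}\,v_g\bigr\vert<1/n$. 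Setting $\rho_{\varepsilon_n}:=\rho_{\varepsilon_n,p(n)}$ (and extending to a continuous parameter by $\rho_\varepsilon=\rho_{\varepsilon_n}$ for $\varepsilon\in[\varepsilon_{n+1},\varepsilon_n)$) yields the required family: a triangle-inequality splitting against $\mu_k(\hat\rho_{\varepsilon_n},1)$ (resp.\ against $\int_M\hat\rho_{\varepsilon_n}\,v_g$) delivers the two desired convergences for every fixed $k$.

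There is essentially no serious obstacle here, the proof being a soft combination of the previous two results; the only subtle point is ensuring that a single diagonal sequence handles \emph{all} $k\in\mathbb N^*$ simultaneously, which is why the choice of $p(n)$ must be tied to $n$ (controlling the first $n$ eigenvalues at the $n$-th stage), rather than to a single fixed $N$.
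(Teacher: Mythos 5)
Your proposal is correct and is exactly the deduction the paper intends (the paper gives no explicit proof, simply stating that the corollary follows from Propositions \ref{smoothing} and \ref{zerorho}): extend $\rho$ by $\varepsilon$ outside $M_0$, invoke Proposition \ref{zerorho}, then smooth via Proposition \ref{smoothing} and diagonalize. The only point worth making explicit is that you use $\essinf_{M_0}\rho>0$ (required by Proposition \ref{zerorho}), which is the paper's implicit reading of ``positive density'' here.
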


\begin{cor}\label{density_neumann} Let  $(M,g)$ be a compact manifold possibly with boundary and let $M_0\subset M$ be a domain with boundary of class $C^1$. For every integer $k>0$ and every $\varepsilon>0$, there exists a positive smooth density $\rho_\varepsilon$ on $M$ such that $\int_M \rho_\varepsilon v_g=\vert M\vert_g$ and  
$$\mu_k(\rho_\varepsilon,1)\ge \frac{\vert M_0\vert_g}{\vert M\vert_g}\lambda_k(M_0,g) -\varepsilon.$$
\begin{proof} Let $\rho$ be the density on $M_0$ defined by  $\rho= \frac{\vert M\vert_g}{\vert M_0\vert_g}$. We apply  Corollary \ref{zero_rho} taking into account that $\gamma_k(M_0, \rho)= \frac{\vert M_0\vert_g}{\vert M\vert_g}\gamma_k(M_0, 1) \ge \frac{\vert M_0\vert_g}{\vert M\vert_g}\lambda_k(M_0,g)$. 
\end{proof}

\end{cor}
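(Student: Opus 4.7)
The plan is to apply Corollary \ref{zero_rho} with the constant density $\rho \equiv \vert M\vert_g / \vert M_0\vert_g$ on $M_0$, chosen precisely so that $\int_{M_0} \rho \, v_g = \vert M\vert_g$ matches the target total mass. This produces a family of smooth positive densities $\tilde\rho_\delta$ on $M$ with $\int_M \tilde\rho_\delta \, v_g \to \vert M\vert_g$ and $\mu_k(\tilde\rho_\delta, 1) \to \gamma_k(M_0, \rho)$ as $\delta \to 0$.

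The next step is to identify $\gamma_k(M_0, \rho)$ with a multiple of the classical Neumann eigenvalue of $M_0$. Since $\rho$ is a positive constant, the quadratic form $Q_0$ is unaffected while the denominator of the Rayleigh quotient picks up a factor of $\rho$, giving
$$\gamma_k(M_0, \rho) = \frac{1}{\rho}\, \gamma_k(M_0, 1) = \frac{\vert M_0\vert_g}{\vert M\vert_g}\,\gamma_k(M_0, 1).$$
Moreover, $\gamma_k(M_0, 1)$ and $\lambda_k(M_0, g)$ are both variational problems on $H^1(M_0)$ with the same denominator $\int_{M_0} u^2 v_g$, but the numerator defining $\gamma_k$ contains the additional nonnegative term $\int_{M \setminus M_0} \vert\nabla H(u)\vert^2 v_g$. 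Hence $\gamma_k(M_0, 1) \ge \lambda_k(M_0, g)$, which yields $\gamma_k(M_0, \rho) \ge (\vert M_0\vert_g / \vert M\vert_g)\, \lambda_k(M_0, g)$.

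Finally, since Corollary \ref{zero_rho} only guarantees the total mass in the limit, I would renormalize: set $\rho_\varepsilon := c_\delta\, \tilde\rho_\delta$ where $c_\delta := \vert M\vert_g / \int_M \tilde\rho_\delta \, v_g \to 1$. By the homogeneity of the Rayleigh quotient in $\rho$, one has $\mu_k(\rho_\varepsilon, 1) = c_\delta^{-1}\, \mu_k(\tilde\rho_\delta, 1)$, and the right-hand side still converges to $\gamma_k(M_0, \rho)$. Choosing $\delta$ small enough that both $\vert c_\delta - 1\vert$ and the approximation error are small delivers the desired bound $\mu_k(\rho_\varepsilon, 1) \ge (\vert M_0\vert_g / \vert M\vert_g)\, \lambda_k(M_0, g) - \varepsilon$. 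I do not foresee a real obstacle here; the statement is essentially a direct application of Corollary \ref{zero_rho} together with a scaling computation, and the only care needed is the bookkeeping that separates the approximation parameter from the statement's error $\varepsilon$.
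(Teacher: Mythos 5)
Your proposal is correct and follows essentially the same route as the paper: apply Corollary \ref{zero_rho} to the constant density $\rho=\vert M\vert_g/\vert M_0\vert_g$ on $M_0$, use the homogeneity $\gamma_k(M_0,\rho)=\rho^{-1}\gamma_k(M_0,1)$ and the comparison $\gamma_k(M_0,1)\ge\lambda_k(M_0,g)$. The only difference is that you explicitly carry out the final renormalization to achieve $\int_M\rho_\varepsilon v_g=\vert M\vert_g$ exactly, a bookkeeping step the paper leaves implicit.
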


\begin{rem}\label{cheeger2-d}
In dimension 2, it is clear from \eqref{dim2}
 that  the problem  of minimizing or maximizing $\mu_k^{g}(\rho,1)$ w.r.t. $\rho$ is equivalent to the problem  of minimizing or maximizing $\lambda_k(M,g)$ w.r.t.  conformal deformations of the metric $g$. 
In dimension $n\ge 3$, the two problems are completely different. To emphasize this difference, observe that, given a positive constant $c$, one has 
$$\inf _{\rho\le c} \mu_k^{g}(\rho,1)\ge \frac 1c \mu_k^{g}(1,1)=\frac 1c \lambda_k(M,g)>0$$
while 
$$\inf _{\rho\le c} \lambda_k(M,\rho g)=0.$$
Indeed, let $B_j$, $j\le k+1$ be a family of mutually disjoint   balls in $M$ and consider the density  $\rho_\varepsilon $ which is  equal to  $c$ on each $B_j$ and equal to $\varepsilon$ elsewhere. According to  \cite[Theorem III.1]{colin}, $\lambda_k(M,\rho_\varepsilon g)$  converges as $\varepsilon \to 0$ to the $(k+1)$-th  Neumann eigenvalue of the union of balls which is zero. 
\end{rem}


\section{Bounding the eigenvalues from below} \label{below}

\subsection{Non existence of ``density-free"  lower bounds.} Let $(M,g)$ be a compact Riemannian manifold of dimension $n\ge 2$, possibly with boundary, and denote by $[g]$ the set of all Riemannian metrics $g'$ on $M$ which are conformal to $g$ with $\vert M\vert_{g'}=\vert M  \vert_{g}$. It is well known that  $\lambda_k(M,g') $ can be as small as desired when $g'$ varies  within  $[g]$, i.e. $\inf_{g'\in [g]}\lambda_k(M,g) =0$ (Cheeger dumbbells). Since $\mu_k^{g}(\rho,\rho^{\frac{n-2}n})= \lambda_k(M, \rho^{\frac 2n}g)  $, this property is equivalent  to
\begin{equation}\label{cheegerdumbbell}
\inf_{\int_M \rho  v_{g}= \vert M\vert_g} \mu_k^{g}(\rho,\rho^{\frac{n-2}n})  =0.
\end{equation}
Let us denote by
$\mathcal R_0$ the set of positive smooth functions $\phi$ on $M$ satisfying $ \fint_M \phi v_{g} =1$, where $\fint_M \phi v_{g} =\frac1{\vert M\vert_{g}} \int_M \phi v_{g} $. 
The following theorem shows that  $\mu_k(\rho,\sigma) $ is not bounded below when  one of the densities $\rho,\sigma$ is  fixed and the second one is varying  within $\mathcal R_0$. We also deal with the case  $\sigma=\rho^p$, $p\ge 0$, which includes \eqref{cheegerdumbbell} and the case of the Witten Laplacian. 

\begin{thm}\label{muinf} For every positive integer $k$,  one has, $\forall p>0$

$$(i) \qquad  \inf_{ \rho\in \mathcal R_0 } \mu_k(\rho,1)  =0
$$
$$(ii) \qquad \inf_{\sigma  \in \mathcal R_0} \mu_k(1,\sigma)  =0
$$
$$(iii) \qquad  \inf_{\rho  \in \mathcal R_0}\mu_k(\rho,\rho^p)  =0.
$$

\end{thm}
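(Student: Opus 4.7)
All three parts are handled by redistributing the corresponding density so that the associated Rayleigh quotient admits a $(k+1)$-dimensional test subspace whose sup-Rayleigh vanishes.

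For (i), the plan is to invoke Corollary~\ref{zero_rho}. Fix $k+1$ distinct points $p_1, \ldots, p_{k+1} \in M$ and, for small $r>0$, set $M_0(r) = \bigsqcup_{j=1}^{k+1} B(p_j,r)$ with pairwise disjoint balls. Let $\rho_r$ be the constant $|M|_g/|M_0(r)|_g$ on $M_0(r)$, so $\int_{M_0(r)} \rho_r\, v_g = |M|_g$. Corollary~\ref{zero_rho} then yields smooth positive densities $\rho_\varepsilon$ on $M$ with $\int_M \rho_\varepsilon\, v_g \to |M|_g$ and $\mu_k(\rho_\varepsilon,1) \to \gamma_k(M_0(r),\rho_r)$; a normalizing factor tending to $1$ places the $\rho_\varepsilon$ in $\mathcal{R}_0$ without changing the limit. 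To estimate $\gamma_k(M_0(r),\rho_r)$, I would use the $(k+1)$-dimensional test subspace of $H^1(M_0(r))$ spanned by the indicator functions $\mathbf{1}_{B(p_j,r)}$, which lie in $H^1(M_0(r))$ because $M_0(r)$ is disconnected. For $u = \sum_j c_j \mathbf{1}_{B(p_j,r)}$, $\nabla u$ vanishes on $M_0(r)$, so $Q_0(u)$ reduces to the Dirichlet energy of the harmonic extension of the piecewise constant boundary data $c_j$ on $\partial B(p_j,r)$. Standard capacity estimates give $Q_0(u) \le C_n(r)\sum_j c_j^2$, with $C_n(r)=O(r^{n-2})$ for $n\ge 3$ and $C_n(r)=O(1/|\log r|)$ for $n=2$. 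The denominator is $\rho_r\, \omega_n r^n \sum_j c_j^2 = \frac{|M|_g}{k+1}\sum_j c_j^2$, independent of $r$. Hence $\gamma_k(M_0(r),\rho_r) \to 0$ as $r\to 0$, and a diagonal choice of $r$ then $\varepsilon$ produces densities in $\mathcal{R}_0$ with $\mu_k(\rho_\varepsilon,1)$ arbitrarily small.

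For (ii) and (iii), a direct test function argument with disjoint supports suffices. Fix $k+1$ pairwise disjoint open sets $A_1, \ldots, A_{k+1}$ and slightly enlarged pairwise disjoint open sets $B_j$ with $\overline{A_j} \subset B_j$. Choose $u_j \in C^\infty(M)$ equal to $1$ on $A_j$, supported in $B_j$, with $|\nabla u_j| \le K$; the $u_j$'s have pairwise disjoint supports. For (ii), take a smooth $\sigma_\varepsilon$ equal to a bounded constant $a_\varepsilon$ outside $\bigcup_j (B_j\setminus A_j)$ and to $\varepsilon$ throughout most of that transition region (smoothly interpolated), normalized so that $\fint_M \sigma_\varepsilon\, v_g=1$; $a_\varepsilon$ stays bounded since $|B_j\setminus A_j|_g$ is fixed. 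For $u = \sum_j c_j u_j$, disjointness yields
$$
\int_M |\nabla u|^2 \sigma_\varepsilon\, v_g \le \varepsilon K^2\bigl(\max_j |B_j|_g\bigr)\sum_j c_j^2, \qquad
\int_M u^2\, v_g \ge \bigl(\min_j |A_j|_g\bigr)\sum_j c_j^2,
$$
so the Rayleigh quotient on the span of the $u_j$'s is $O(\varepsilon)$, giving $\mu_k(1,\sigma_\varepsilon) \to 0$. For (iii), choose analogously a smooth $\rho_\varepsilon$ equal to a bounded constant $a$ on $\bigcup_j A_j$ and equal to $\varepsilon$ on the transition region, with $\fint_M \rho_\varepsilon\, v_g = 1$. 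The same disjointness argument gives
$$
\int_M |\nabla u|^2 \rho_\varepsilon^p\, v_g \le \varepsilon^p K^2\bigl(\max_j |B_j|_g\bigr)\sum_j c_j^2, \qquad
\int_M u^2 \rho_\varepsilon\, v_g \ge a\bigl(\min_j |A_j|_g\bigr)\sum_j c_j^2,
$$
so the Rayleigh quotient is $O(\varepsilon^p)$, which vanishes since $p>0$.

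The main obstacle lies in part (i) for $n=2$. A naive test function construction analogous to (ii) and (iii), once the mass constraint $\fint_M \rho_\varepsilon\, v_g = 1$ is imposed, only produces a Rayleigh quotient of order $(k+1)\omega_n r^{n-2}/|M|_g$: this vanishes for $n\ge 3$ but is of constant order when $n=2$, so it does not settle (i) uniformly in dimension. The appeal to Corollary~\ref{zero_rho} is precisely what rescues the two-dimensional case: it transfers the problem to the Dirichlet energy of a harmonic extension on $M\setminus M_0(r)$, for which the relevant capacity does vanish as $r\to 0$ even in dimension $2$, albeit only logarithmically.
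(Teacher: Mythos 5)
Your proposal is correct. Parts (ii) and (iii) follow essentially the same template as the paper's proof (disjointly supported plateau functions whose gradients live where the relevant weight is tiny); the only differences are cosmetic: the paper shrinks the supports with $\varepsilon$ and tunes explicit powers ($\sigma_\varepsilon=\varepsilon^5$ on balls of radius $\varepsilon^2$, $\rho_\varepsilon=\varepsilon^{-n}$ on balls of radius $\varepsilon$), whereas you keep fixed supports and send the weight to zero only on the transition annuli, which is slightly cleaner in bookkeeping. Just make sure the interpolation zone where $\sigma_\varepsilon$ (resp. $\rho_\varepsilon$) climbs back up to $a_\varepsilon$ is nested strictly outside $\operatorname{supp}\nabla u_j$, e.g.\ by choosing $A_j\subset\subset A_j'\subset\subset B_j'\subset\subset B_j$ with $u_j$ transitioning in $B_j'\setminus \overline{A_j'}$ and the weight equal to $\varepsilon$ on all of $B_j\setminus\overline{A_j}$.

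Where you genuinely diverge from the paper is part (i). The paper argues directly for $n\ge 3$ by concentrating the mass as $\rho_\varepsilon=\varepsilon^{-n}$ on a ball of radius $\varepsilon$ containing $k$ sub-balls of radius $\sim\varepsilon^{(n+2)/4}$, obtaining the explicit rate $\mu_k(\rho_\varepsilon,1)\lesssim\varepsilon^{(n-2)/2}$, and then disposes of $n=2$ separately by invoking the identity $\mu_k(\rho,1)=\lambda_k(M,\rho g)$ together with the well-known existence of conformal metrics with small $\lambda_k$. You instead route everything through Corollary~\ref{zero_rho}: collapse onto a disjoint union of $k+1$ shrinking balls carrying constant density of fixed total mass, and estimate $\gamma_k(M_0(r),\rho_r)$ using locally constant test functions, so that $Q_0$ reduces to the Dirichlet energy of the harmonic extension, i.e.\ a capacity that vanishes like $r^{n-2}$ for $n\ge3$ and like $1/|\log r|$ for $n=2$. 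This is legitimate (the paper itself applies the Colin de Verdi\`ere machinery to disconnected $M_0$ in Remark~\ref{cheeger2-d}, and $\rho_r$ satisfies the hypotheses of Proposition~\ref{zerorho}), and it buys a proof that is uniform in dimension, covering $n=2$ without an external citation; the price is reliance on the preliminary convergence results and the loss of the paper's explicit decay rate, plus the extra diagonal extraction in $r$ and $\varepsilon$. Your closing remark correctly identifies why the naive test-function computation fails at $n=2$ and why the harmonic-extension formulation rescues it.
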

 
\begin{proof}[Proof of Theorem \ref{muinf}]


\smallskip


{ $(i)$}:  In dimension 2 one has  $\mu_k(\rho,1) = \lambda_k(M,\rho g)$ and  the problem is equivalent to that of deforming conformally the metric $g$ into a metric $\rho g$ whose $k$-th eigenvalue is as small as desired. 
The existence of such a deformation is well known. 


\smallskip

Assume now that the dimension of $M$ is at least 3.
Let us choose a point  $x_0$  in $M$. The Riemannian volume of a geodesic ball $B(x,r)$  of radius $r$ in $M$ is asymptotically
equivalent, as $r\to 0$,  to $\omega_{n} r^{n}$,
where $\omega_{n}$ is the volume  of the unit ball  in the 
$n$-dimensional Euclidean space.   Therefore, there exist $\varepsilon_0 \in(0,1)$  sufficiently small and $N\in \N$ so that, for every $r < \frac {\varepsilon_0} N$ and every $x\in B(x_0,\varepsilon_0)$, 
\begin{equation}\label{eq1}
 \frac 12 \omega_{n} r^{n}\le |B(x,r)| \le 2\omega_{n} r^{n}.
\end{equation}

Fix a positive integer $k$ and let $\delta =\frac{n-2}{4}$ so that  $\delta  <\frac n2 -1$. One can choose $N\in \N$   sufficiently large so that, for every $\varepsilon < \frac {\varepsilon_0} N$,  the ball $B(x_0,\varepsilon)$ contains $k$ mutually disjoint balls of radius $2\varepsilon^{\frac n2 -\delta}$ (indeed, since $\frac n2 -\delta >1$, $2\varepsilon^{\frac n2 -\delta}$ is very small compared to $\varepsilon $ as the latter tends to zero).
We consider a smooth positive density $\rho_\varepsilon$ such that $\rho_\varepsilon = \frac 1{\varepsilon^n}$ inside $B(x_0,\varepsilon)$,  $\rho_\varepsilon=\varepsilon$ in $M\setminus B(x_0,2\varepsilon)$, and $\rho_\varepsilon \le \frac 1{\varepsilon^n}$ elsewhere. Thanks to \eqref{eq1}, one has
$$ \int_M \rho_\varepsilon  v_{g} \le \frac 1{\varepsilon^n}\vert B(x_0,2\varepsilon)\vert_g + \varepsilon \vert M\vert _g\le 2^{n+1}\omega_{n} + \varepsilon \vert M\vert _g .$$  
For simplicity, we set $\alpha= {\frac n2 -\delta}= \frac{n+2}{4}$ and denote by $x_1,\dots, x_k$ the centers of $k$ mutually disjoint balls of radius $2\varepsilon^{\alpha}$ contained in $B(x_0,\varepsilon)$. 

\smallskip
For each $i\le k$, we denote $f_i$ the function  which vanishes  outside  $B(x_i,2\varepsilon^{\alpha})$, equals $1$ in $B(x_i,\varepsilon^{\alpha})$, and $f_i(x)=2-\frac 1{\varepsilon^{\alpha}} d_{g}(x,x_i)$ for every $x$ in the annulus $B(x_i,2\varepsilon^{\alpha}) \setminus B(x_i,\varepsilon^{\alpha})$. The norm of the gradient of $f_i$ vanishes everywhere unless inside the annulus where we have $|\nabla f_i|= \frac 1{\varepsilon^{\alpha}}$. Thus, using \eqref{eq1},
$$\int_M f_i^2 \rho_\varepsilon v_g\ge \frac 1{\varepsilon^{n}} \int_{B(x_i,\varepsilon^{\alpha})} f_i^2 v_g = \frac{|B(x_i,\varepsilon^{\alpha})|}{\varepsilon^{n}}\ge\frac 12 \omega_n \varepsilon^{n(\alpha-1)}$$
and
$$\int_M |\nabla f_i|^2 v_g\le  \frac {|B(x_i,2\varepsilon^{\alpha})|}{\varepsilon^{2\alpha}}=2^{n+1} \omega_n \varepsilon^{\alpha(n-2)} .$$
Thus
$$R_{(g,\rho_\varepsilon,1)}(f_i)\le {2^{n+2}}\varepsilon^{n-2\alpha}={2^{n+2}}\varepsilon^{\frac{n-2}2} .$$
In conclusion, we have
$$\mu_k(\rho_\varepsilon,1)\le {2^{n+2}}\varepsilon^{\frac{n-2}2} $$
and
$$\mu_k\left(\frac{\rho_\varepsilon}{\fint_M \rho_\varepsilon  v_g},1\right)=\mu_k(\rho_\varepsilon,1)\fint_M \rho_\varepsilon  v_g\le {2^{n+2}}\left( \frac{2^{n+1}\omega_{n}} {\vert M\vert_g}\varepsilon^{\frac{n-2}2} + \varepsilon^{\frac{n}2} \right)  .$$
Letting $\varepsilon $ tends to zero we get the result. 

\smallskip

 \noindent $(ii)$:  The proof is similar to the previous one. For $\varepsilon$ sufficiently small, we may assume that there exist $k+1$ mutually disjoint balls $B(x_i,\varepsilon^2)$ inside a ball $B(x_0,\varepsilon)$ and  consider any function $\sigma_\varepsilon\in \mathcal R_0$ such that $\sigma_\varepsilon =\varepsilon ^5$ inside  $B(x_0,\varepsilon)$. For each $i\le k+1$, let $f_i$ be the   function which vanishes  outside  $B(x_i,2\varepsilon^{2})$, equals $1$ in $B(x_i,\varepsilon^{2})$, and $f_i(x)=2-\frac 1{\varepsilon^{2}} d_{g}(x,x_i)$  in  $B(x_i,2\varepsilon^{2}) \setminus B(x_i,\varepsilon^2)$. As before,
 $$\int_M f_i^2  v_g\ge  \int_{B(x_i,\varepsilon^{2})} f_i^2  dx \ge  |B(x_i,\varepsilon^{2})| \ge \frac{1}2\omega_n \varepsilon^{2n}$$
and
$$\int_M |\nabla f_i|^2\sigma_\varepsilon v_g\le \frac 1{\varepsilon^4 } \int_{B(x_i,2\varepsilon^{2})}\sigma_\varepsilon v_g \le \varepsilon |B(x_i,2\varepsilon^{2})|\le 2^{n+1} \omega_n \varepsilon^{2n+1}.$$
Thus
$$\mu_k(1,\sigma_\varepsilon )\le \max_{i\le k+1} \frac{\int_M |\nabla f_i|^2\sigma_\varepsilon v_g}{\int_M f_i^2  v_g}\le 2^{n+2} \varepsilon.$$

\smallskip
  \noindent $(iii)$:  
For sufficiently small $\varepsilon$, let  $B(x_i,4\varepsilon)$, $i\le k+1$, be  $k+1$ mutually disjoint balls of radius $4\varepsilon$ in $M$. As before, we can assume that, $\forall r\le 4\varepsilon$, $\frac 12 \omega_{n} r^{n}\le |B(x_i,r)| \le 2\omega_{n} r^{n}$. We define $\rho_\varepsilon$ to be equal to $\frac 1{\varepsilon^n}$ on each  of the balls $B(x_i,\varepsilon)$ and equal to $\varepsilon^n$ in the complement of $\cup_{i\le k} B(x_i,2\varepsilon)$. 
For every $i\le k+1$, the function $f_i$ defined to be equal to $1$ on $B(x_i,2\varepsilon)$ and $f_i(x)=2-\frac 1{2\varepsilon} d_{g}(x,x_i)$  in the annulus $B(x_i,4\varepsilon)\setminus B(x_i,2\varepsilon)$ and zero in the complement of  $B(x_i,4\varepsilon)$ satisfies 
$$\int_M f_i^2 \rho_\varepsilon v_g\ge  \int_{B(x_i,\varepsilon)} f_i^2 \rho_\varepsilon dx =  \frac 1{\varepsilon^n} |B(x_i,\varepsilon)| \ge \frac 12{ \omega_{n}}.$$
 On the other hand, $\forall p>0$,
$$\int_M |\nabla f_i|^2\rho_\varepsilon^p v_g =\varepsilon^{pn} \int_{B(x_i,4\varepsilon)\setminus B(x_j,2\varepsilon)} |\nabla f_i|^2 v_g = \varepsilon^{pn} \frac 1{4\varepsilon^{2}}|B(x_i,4\varepsilon)|  \le 2^{2n-1}\omega_n\varepsilon^{(p+1)n-2}.$$
 Thus
$$\mu_k(\rho_\varepsilon,\rho_\varepsilon^p )\le \max_{i\le k+1} \frac{\int_M |\nabla f_i|^2\sigma_\varepsilon v_g}{\int_M f_i^2  v_g}\le 2^{2n}\varepsilon^{(p+1)n-2}.$$
Regarding $\fint_M \rho_\varepsilon v_g$, it is clear that it is bounded both from above and from below by positive constants that are independent of $\varepsilon$, which enables us to conclude.
\end{proof}

\subsection{Cheeger-type inequality}

Theorem \ref{muinf} tells us that it is necessary to involve other quantities than the total mass in order to get lower bounds for the eigenvalues. Our next theorem gives a lower estimate which  is modeled on Cheeger's inequality, with  suitably defined isoperimetric constants, as was done by Jammes for Steklov eigenvalues \cite{jammes1}. 

\smallskip
Let $(M,g)$ be a compact Riemannian manifold, possibly with boundary. The classical Cheeger constant is defined by 
$$h(M)=\inf_{{\vert D\vert_g} \le\frac 12\vert M\vert_g }\frac{\vert\partial D\setminus\partial M\vert_g}{\vert D\vert_g}=\inf_{D\subset M} \frac{\vert\partial D\setminus\partial M \vert_g}{\min\{\vert D\vert_g, \vert M\vert_g - \vert D\vert_g\}}.$$
Given two  positive densities $\rho $ and $\sigma$ on $M$, we introduce the following Cheeger-type constant:
$$h_{\rho, \sigma}(M)=\inf_{\vert D\vert_\sigma\le\frac 12 \vert M\vert_\sigma }\frac{\vert\partial D\setminus\partial M\vert_\sigma}{\vert D\vert_\rho}$$
with $\vert D\vert_\sigma $ (resp. $\vert\partial D\setminus\partial M\vert_\sigma $) is  the $n$-volume of $D$ (resp. the $(n-1)$-volume of $\partial D\setminus\partial M$) with respect to the measure induced by $\sigma v_g$.
\begin{thm}\label{Cheeger} 
One has
$$\mu_1(\rho,\sigma)\ge \frac 14 h_{\sigma,\sigma}(M) h_{\rho, \sigma}(M).$$
\end{thm}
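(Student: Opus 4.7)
The plan is to follow the classical Cheeger scheme, but to leverage the two different weighted isoperimetric constants on opposite sides of the argument: $h_{\rho,\sigma}$ to control the $\rho$-weighted $L^2$-norm of a test function via the coarea formula, and $h_{\sigma,\sigma}$ to absorb the spurious $\sigma$-weighted $L^2$-norm that appears after Cauchy--Schwarz. By Proposition \ref{smoothing} it is enough to establish the inequality when $\rho$ and $\sigma$ are smooth and positive, which legitimizes working with a classical first eigenfunction and integrating by parts.

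First I would pick a first eigenfunction $u$ for $\mu_1(\rho,\sigma)$, so that $-\mathrm{div}(\sigma\nabla u)=\mu_1(\rho,\sigma)\,\rho\, u$ and, by \eqref{intro0}, $\int_M u\rho\,v_g=0$. Since $|\{u>0\}|_\sigma$ and $|\{u<0\}|_\sigma$ cannot both exceed $|M|_\sigma/2$, I may assume (replacing $u$ by $-u$ if necessary) that $|\{u>0\}|_\sigma\le \tfrac12|M|_\sigma$. Setting $w=u_+$ and using that $\nabla w=\mathbf 1_{\{u>0\}}\nabla u$ a.e., integration by parts together with the eigenvalue equation (and the Neumann condition on $\partial M$) gives
\begin{equation*}
\int_M |\nabla w|^2\sigma\,v_g=\int_M \sigma\nabla u\cdot\nabla w\,v_g=\mu_1(\rho,\sigma)\int_M u\,w\,\rho\,v_g=\mu_1(\rho,\sigma)\int_M w^2\rho\,v_g,
\end{equation*}
so $w\in H^1(M)$ is a Lipschitz test function whose Rayleigh quotient equals $\mu_1(\rho,\sigma)$ and whose support has $\sigma$-measure at most $|M|_\sigma/2$.

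Next I would introduce $f=w^2$ and apply the coarea formula for the measure $\sigma\,v_g$:
\begin{equation*}
\int_M |\nabla f|\,\sigma\,v_g=\int_0^{\infty}|\partial\{f>t\}\setminus\partial M|_\sigma\,dt.
\end{equation*}
Each superlevel set $\{f>t\}$ lies in $\{u>0\}$, hence has $\sigma$-measure $\le|M|_\sigma/2$, so the definitions of $h_{\rho,\sigma}(M)$ and $h_{\sigma,\sigma}(M)$ yield, after integrating in $t$,
\begin{equation*}
\int_M|\nabla f|\,\sigma\,v_g\ge h_{\rho,\sigma}(M)\int_M w^2\rho\,v_g,\qquad \int_M|\nabla f|\,\sigma\,v_g\ge h_{\sigma,\sigma}(M)\int_M w^2\sigma\,v_g.
\end{equation*}

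Finally, Cauchy--Schwarz applied to $|\nabla f|=2w|\nabla w|$ gives
\begin{equation*}
\int_M|\nabla f|\,\sigma\,v_g\le 2\Bigl(\int_M w^2\sigma\,v_g\Bigr)^{1/2}\Bigl(\int_M|\nabla w|^2\sigma\,v_g\Bigr)^{1/2}.
\end{equation*}
Plugging the $h_{\sigma,\sigma}$-lower bound into the first factor and simplifying yields $\int_M|\nabla f|\,\sigma\,v_g\le\frac{4}{h_{\sigma,\sigma}(M)}\int_M|\nabla w|^2\sigma\,v_g$, and comparing this with the $h_{\rho,\sigma}$-lower bound gives
\begin{equation*}
h_{\rho,\sigma}(M)\int_M w^2\rho\,v_g\le\frac{4}{h_{\sigma,\sigma}(M)}\int_M|\nabla w|^2\sigma\,v_g,
\end{equation*}
which, divided by $4\int_M w^2\rho\,v_g$, is exactly the claimed inequality. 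The only real obstacle is the bookkeeping for the two weights: one has to recognize that the co-area argument is most naturally run with respect to the measure $\sigma\,v_g$ (so that both Cheeger constants of the statement appear with the correct numerators), and that Cauchy--Schwarz must be balanced so as to leave $\int|\nabla w|^2\sigma$ on the right and $\int w^2\sigma$ on the left, where $h_{\sigma,\sigma}$ can then be invoked.
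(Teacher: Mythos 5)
Your proof is correct, and its analytic core is exactly the paper's: the coarea formula taken with respect to the measure $\sigma\,v_g$, the two isoperimetric bounds applied to the superlevel sets (perimeter $\ge h_{\sigma,\sigma}\times\sigma$-volume and $\ge h_{\rho,\sigma}\times\rho$-volume), and the Cauchy--Schwarz step that trades $\int|\nabla w^2|\sigma\,v_g$ for $\int|\nabla w|^2\sigma\,v_g$ at the cost of the factor $\frac14$ and a spurious $\int w^2\sigma\,v_g$, which is then absorbed by $h_{\sigma,\sigma}$. The one structural difference is how you produce a one-signed function supported on a set of $\sigma$-measure at most $\frac12|M|_\sigma$. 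You take the positive part of a first eigenfunction and use the identity $\int|\nabla u_+|^2\sigma\,v_g=\mu_1(\rho,\sigma)\int u_+^2\rho\,v_g$, which is Cheeger's original route; the paper instead works with an arbitrary (Morse) test function $f$ with $\int_M f\rho\,v_g=0$, introduces a median $m$ of $f$ for the measure $\sigma v_g$, applies the one-sided estimate to $(f-m)_+$ and $(m-f)_+$, sums, and uses $\int_M(f-m)^2\rho\,v_g\ge\int_M f^2\rho\,v_g$ before invoking \eqref{intro0}. Your version is shorter and avoids the median, but it requires the eigenfunction to exist and be regular enough for the nodal-set manipulations, which is why you must first reduce to smooth densities via Proposition \ref{smoothing}; in that reduction you should also check that the Cheeger constants behave well in the limit, i.e.\ that $\liminf_p h_{\sigma_p,\sigma_p}h_{\rho_p,\sigma_p}\ge h_{\sigma,\sigma}h_{\rho,\sigma}$ --- immediate if $\rho_p\to\rho$ and $\sigma_p\to\sigma$ uniformly (available for continuous densities by mollification), but not automatic for the $L^2$ approximation of Proposition \ref{smoothing}. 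The paper's median argument works directly at the level of test functions and so sidesteps this approximation issue entirely.
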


\begin{proof}
The proof follows the same general outline as the original proof by  Cheeger (see \cite{cheeger} and \cite{Buser2}). We give here a complete proof in the case where $M$ is a closed manifold.  The proof in the case $\partial M\ne\emptyset$ can be done analogously. Let $f$ be a  Morse function   such that the $\sigma$-volume of its positive nodal domain $\Omega_+(f)=\{f>0\} $ is less or equal to half the $\sigma$-volume of $M$. 
 For every $t\in (0,\sup f)$ excepting a finite number of values, the set $f^{-1}(t)$ is a regular hypersurface of $M$. We denote by $ v^t_g $ the measure induced on $f^{-1}(t)$ by $v_g$ and set $P_\sigma(t)= \int_{f^{-1}(t)}\sigma v^t_g$. The level sets of $f$ are denoted  $\Omega(t)=\{f>t\}$  and we set  $V_\sigma(t)=\int_{\Omega(t)}  \sigma v_g$ and $V_\rho (t)=\int_{\Omega(t)} \rho \, v_g$ . Using the co-area formula one gets
$$\int_{\Omega_+(f)} |\nabla f| \sigma v_g=  \int_0^{+\infty} P_\sigma(t)dt .$$
On the other hand, the same co-area formula gives 
$$V_\rho(t)= \int_t^{+\infty} ds\int_{f^{-1}(s)}  \frac \rho{|\nabla f| } v^s_g  .$$
 Thus
$$V_\rho'(t) = -\int_{f^{-1}(t)}  \frac \rho{|\nabla f| } v^t_g. $$
Now
$$\int_{\Omega_+(f)} f \rho \, v_g =  \int_0^{+\infty}dt \int_{f^{-1}(t)}  \frac{ f\rho}{|\nabla f| }   v^t_g  = \int_0^{+\infty} tdt\int_{f^{-1}(t)}  \frac \rho{|\nabla f| } v^t_g   = -\int_0^{+\infty} tV_\rho'(t) dt  $$
which gives after integration by parts
$$\int_{\Omega_+(f)} f\rho \, v_g= \int_0^{+\infty} V_\rho(t) dt .$$
Similarly, one has 
$$\int_{\Omega_+(f)} f \sigma v_g   = \int_0^{+\infty} V_\sigma(t) dt .$$
Since $P_\sigma(t)\ge h_{\sigma,\sigma}(M) V_\sigma(t)$ and $P_\sigma(t)\ge h_{\rho, \sigma}(M) V_\rho(t)$ we deduce 
$$\int_{\Omega_+(f)} |\nabla f|  \sigma v_g\ge \max \left\{ h_{\sigma,\sigma}(M) \int_{\Omega_+(f)} f \sigma v_g \ , \ h_{\rho, \sigma}(M) \int_{\Omega_+(f)} f \rho \, v_g \right\}.$$
Using Cauchy-Schwarz inequality we get
\begin{eqnarray}\label{eq 2}
\nonumber \int_{\Omega_+(f)} |\nabla f|^2 \sigma v_g &\ge& \frac 14\frac{\left( \int_{\Omega_+(f)} |\nabla f^2| \sigma v_g\right)^2 }{\int_{\Omega_+(f)}  f^2 \sigma v_g}
\ge\frac 14\frac{ h_{\sigma,\sigma}(M)h_{\rho, \sigma}(M) \int_{\Omega_+(f)}  f^2\sigma v_g \int_{\Omega_+(f)}  f^2 \rho \, v_g }{\int_{\Omega_+(f)} f^2\sigma v_g}\\ &=& \frac 14 h_{\sigma,\sigma}(M)h_{\rho, \sigma}(M)  \int_{\Omega_+(f)}  f^2 \rho \, v_g .
\end{eqnarray}

Now, let $m\in\R$ be such that $\vert \{f>m\}\vert_\sigma = \vert \{f<m\}\vert_\sigma =\frac 12 \vert M\vert_\sigma$ (such an $m$ is called a median of $f$ for $\sigma$). Applying \eqref{eq 2} to $f-m$ and $m-f$ we get 
$$
\int_{\{f>m\}} |\nabla f|^2 \sigma v_g \ge \frac 14 h_{\sigma,\sigma}(M)h_{\rho, \sigma}(M)  \int_{\{f>m\}}  (f-m)^2 \rho \, v_g $$
and
$$
\int_{\{f<m\}} |\nabla f|^2 \sigma v_g \ge \frac 14 h_{\sigma,\sigma}(M)h_{\rho, \sigma}(M)  \int_{\{f<m\}}  (f-m)^2 \rho \, v_g .$$
Summing up we obtain
$$
\int_{M} |\nabla f|^2 \sigma v_g \ge \frac 14 h_{\sigma,\sigma}(M)h_{\rho, \sigma}(M)  \int_{M}  (f-m)^2 \rho \, v_g. $$
Since $ \int_{M}  (f-m)^2 \rho \, v_g= \int_{M}  f^2 \rho \, v_g +m^2\vert M\vert_\rho -2m\int_{M}  f \rho \, v_g $, we deduce that, for every $f$ such that $\int_{M}  f \rho \, v_g =0$,
$$
\int_{M} |\nabla f|^2 \sigma v_g \ge \frac 14 h_{\sigma,\sigma}(M)h_{\rho, \sigma}(M)  \int_{M}  f^2 \rho \, v_g $$
which, thanks to \eqref{intro0}, implies the desired inequality.
\end{proof}

 
\begin{rem}
In dimension 2, Theorem \ref{Cheeger} can be restated as follows:
If $(M,g)$ is a compact Riemannian surface, then
\begin{eqnarray}\label{cheeger_impN}
\lambda_1(M,g)\ge \frac 14 \sup_{g'\in [g]}h_{g',g'}(M) h_{g,g'}(M)
\end{eqnarray}
where   $h_{g,g'}(M)=\inf_{\vert D\vert_{g'}\le\frac 12 \vert M\vert_{g'} }\frac{\vert\partial D\vert_{g'}}{\vert D\vert_{g}}$. Indeed, for any $g'\in [g]$ there exists a positive $\rho\in C^\infty(M)$ such that $g=\rho g'$. Thus, $\lambda_1(M,g)=\mu_1^{g'}(\rho,1)$ and \eqref{cheeger_impN} follows from  Theorem \ref{Cheeger}.  
This inequality can be seen as an improvement of  Cheeger's inequality since the right-hand side is obviously bounded below by  $h_{g,g}(M)^2$. Notice that in  \cite{Buser},  Buser  gives an example of a
 family of metrics on the 2-torus such that the Cheeger constant goes to zero while the first eigenvalue is bounded below. The advantage of  \eqref{cheeger_impN} is that its right hand side does not go to zero for Buser's example.

\end{rem}

A natural question is to investigate a possible reverse inequality of Buser's type  (see \cite{Buser, Milman}).  The following theorem 
provides a negative answer to this question.
 
 \begin{thm}\label{Buser-type} Let $(M,g)$ be a compact Riemannian manifold, possibly with boundary. 
 
 \smallskip
  \noindent (i) There exists a family of positive densities $\sigma_\varepsilon$, $\varepsilon>0$, on $M$ with $\fint_M \sigma_\varepsilon v_{g}=1$ and such that $h_{1,\sigma_\varepsilon}(M) h_{\sigma_\varepsilon,\sigma_\varepsilon}(M)$ goes  to zero with $\varepsilon$ while $\mu_1(1,\sigma_\varepsilon)$ stays bounded below by a constant $C$ which does not depend on $\varepsilon$.
  
   \smallskip
 \noindent (ii) There exists a family of positive densities $\rho_\varepsilon$, $\varepsilon>0$, on $M$ with $\fint_M \rho_\varepsilon v_{g}=1$ and such that $h_{\rho_\varepsilon,1}(M)$ goes to zero with $\varepsilon$ while $\mu_1(\rho_\varepsilon,1)$ stays bounded below by a constant $C$ which does not depend on $\varepsilon$.
 \end{thm}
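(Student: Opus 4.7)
For both parts the density will be sharply concentrated (resp.\ sharply small) on a small geodesic ball $B_\varepsilon = B_g(x_0,\varepsilon)$ around an interior point $x_0 \in M$. Fix any exponent $\alpha \in (1,2)$ (for instance $\alpha = 3/2$). Define $\rho_\varepsilon$ to be equal to $\varepsilon^{-\alpha}$ on $B_\varepsilon$ and to a constant $a_\varepsilon$ on $M\setminus B_\varepsilon$ chosen so that $\fint_M \rho_\varepsilon\,v_g = 1$; similarly let $\sigma_\varepsilon$ equal $\varepsilon^{\alpha}$ on $B_\varepsilon$ and a constant $b_\varepsilon$ outside, with $\fint_M \sigma_\varepsilon\,v_g = 1$. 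Since $\alpha<n$ the normalizing constants $a_\varepsilon,b_\varepsilon$ both tend to $1$ as $\varepsilon\to 0$. These piecewise-constant $L^\infty$ densities can be replaced by smooth positive ones at the end via Proposition \ref{smoothing}.

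\textbf{Cheeger constants go to zero.} Taking $D = B_\varepsilon$ (whose $g$- and $\sigma_\varepsilon$-volumes are both $\ll |M|/2$ for small $\varepsilon$),
\[
h_{\rho_\varepsilon,1}(M) \le \frac{|\partial B_\varepsilon|_g}{|B_\varepsilon|_{\rho_\varepsilon}} \sim n\,\varepsilon^{\alpha-1}, \qquad h_{1,\sigma_\varepsilon}(M) \le \frac{|\partial B_\varepsilon|_{\sigma_\varepsilon}}{|B_\varepsilon|_g} \sim n\,\varepsilon^{\alpha-1},
\]
both vanishing as $\varepsilon\to 0$ since $\alpha>1$. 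The factor $h_{\sigma_\varepsilon,\sigma_\varepsilon}(M)$ is bounded uniformly in $\varepsilon$ (test with a fixed domain of $g$-volume $\approx |M|_g/2$ away from $x_0$, where $\sigma_\varepsilon\approx 1$), so the product $h_{1,\sigma_\varepsilon}h_{\sigma_\varepsilon,\sigma_\varepsilon}$ also vanishes.

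\textbf{Eigenvalues stay bounded below.} The key analytic ingredient is the quantitative Sobolev trace estimate
\[
\int_{B_\varepsilon} u^2\,v_g \le C\,\varepsilon^{2-\delta}\,\|u\|_{H^1(M)}^2 \qquad \forall\, u \in H^1(M),
\]
with $\delta=0$ when $n\ge 3$ (H\"older on $B_\varepsilon$ combined with $H^1(M)\hookrightarrow L^{2n/(n-2)}(M)$) and $\delta>0$ arbitrarily small when $n=2$ (using $L^p$-Sobolev for $p$ large). For part (ii), any $u$ with $\int u\rho_\varepsilon v_g=0$ splits as $u=\bar u+v$ with $\bar u=\fint_M u$ and $\int v\,v_g=0$, and the constraint rewrites as $\bar u\,|M|_g = -(\varepsilon^{-\alpha}-a_\varepsilon)\int_{B_\varepsilon}v\,v_g$. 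Cauchy-Schwarz, the Sobolev bound applied to $v$, and Poincar\'e ($\|v\|_{L^2}^2\le\lambda_1(M,g)^{-1}\|\nabla v\|^2$) give $\bar u^2\le C\,\varepsilon^{\,n+2-2\alpha-\delta}\|\nabla u\|^2$, hence $\|u\|_{L^2}^2\le C\|\nabla u\|^2$ since $\alpha<(n+2)/2$. Plugging back,
\[
\int u^2\rho_\varepsilon v_g = \varepsilon^{-\alpha}\!\int_{B_\varepsilon}\!u^2 + a_\varepsilon\!\int_{M\setminus B_\varepsilon}\!u^2 \le C\varepsilon^{2-\alpha-\delta}\|u\|_{H^1}^2 + C\|u\|_{L^2}^2 \le C'\|\nabla u\|^2,
\]
where the bound on the $B_\varepsilon$-contribution uses the Sobolev estimate and $\alpha<2$. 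For part (i), any $u$ with $\int u\,v_g=0$ has $\fint_{M\setminus B_\varepsilon}u = -|M\setminus B_\varepsilon|^{-1}\int_{B_\varepsilon}u$, which is controlled via the Sobolev bound; the Neumann-Poincar\'e inequality on $M\setminus B_\varepsilon$ (whose first eigenvalue converges to $\lambda_1(M,g)>0$ as $\varepsilon\to 0$) together with the Sobolev estimate on $B_\varepsilon$ yields
\[
\|u\|_{L^2}^2 \le C\bigl(\varepsilon^{2-\delta}\|\nabla u\|_{L^2(B_\varepsilon)}^2 + \|\nabla u\|_{L^2(M\setminus B_\varepsilon)}^2\bigr) \le C'\int_M|\nabla u|^2\sigma_\varepsilon\,v_g,
\]
using $\varepsilon^{2-\delta}\le\varepsilon^\alpha$ on $B_\varepsilon$ (choosing $\delta<2-\alpha$) and $\sigma_\varepsilon\approx 1$ on the complement. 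In both cases $\mu_1\ge 1/C'>0$, uniformly in $\varepsilon$.

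\textbf{Main obstacle.} The heart of the argument is calibrating $\alpha$: we need $\alpha>1$ for the Cheeger constants to vanish and $\alpha<2$ for the Sobolev gain $\int_{B_\varepsilon}u^2 \lesssim \varepsilon^2\|u\|_{H^1}^2$ to dominate the singular factor $\varepsilon^{-\alpha}$ (resp.\ balance the degenerate factor $\varepsilon^{\alpha}$) in the Rayleigh quotient. Part (ii) is the more delicate one, since the orthogonality $\int u\rho_\varepsilon v_g=0$ threatens to push the $L^2$-mass of $u$ onto $B_\varepsilon$ where $\rho_\varepsilon$ is concentrated; only the quantitative Sobolev trace combined with Poincar\'e for the mean-free remainder precludes this and yields the uniform lower bound.
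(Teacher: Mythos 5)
Your proof is correct, but it follows a genuinely different route from the paper's on both counts, and it is worth recording where the two diverge. For part (i) the paper does the opposite of what you do: it concentrates $\sigma_\varepsilon$ (taking it equal to $\varepsilon^{-(1+a)}$ on the small ball), which drives $h_{\sigma_\varepsilon,\sigma_\varepsilon}$ to zero while $h_{1,\sigma_\varepsilon}$ stays bounded, and — crucially — makes the eigenvalue lower bound trivial, since $\sigma_\varepsilon\ge b_\varepsilon\ge\frac12$ pointwise gives $\mu_1(1,\sigma_\varepsilon)\ge b_\varepsilon\lambda_1(M,g)$ with no further work. Your dual choice ($\sigma_\varepsilon=\varepsilon^\alpha$ on $B_\varepsilon$, so that $h_{1,\sigma_\varepsilon}\to0$ instead) degenerates the Dirichlet energy on the small ball and therefore forces you to actually prove the spectral lower bound; your argument via the Sobolev estimate $\int_{B_\varepsilon}u^2\le C\varepsilon^{2-\delta}\Vert u\Vert_{H^1}^2$, the Neumann--Poincar\'e inequality on $M\setminus B_\varepsilon$ (uniform in $\varepsilon$), and the calibration $\delta<2-\alpha$ does close this, but you are solving a harder problem than necessary for (i). For part (ii) your density is essentially the paper's ($\alpha\in(1,2)$ matches their exponent $1+a$), but the eigenvalue bound is obtained quite differently: the paper reduces to the Euclidean unit ball by quasi-isometry, runs a radial ODE/capacity computation to bound the Dirichlet eigenvalue of the weighted ball, and then splits Neumann eigenfunctions into radial and non-radial parts; you instead work directly on $(M,g)$, controlling the mean $\bar u$ forced by the constraint $\int u\rho_\varepsilon v_g=0$ through the same Sobolev trace estimate and absorbing the singular factor $\varepsilon^{-\alpha}$ against the gain $\varepsilon^{2-\delta}$ (which is exactly where $\alpha<2$ enters — the same threshold that appears implicitly in the paper's computation \eqref{annulus2}, where the exponent $1+a<2$ is what saves the day). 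Your version is shorter, dimension-uniform (modulo the $\delta>0$ adjustment when $n=2$), and avoids the quasi-isometry transfer step and the appeal to Proposition \ref{zerorho}; the paper's version yields a more explicit constant, namely $\min(n-1,\tfrac14\lambda^*)$ on the ball. Two small points to make explicit in a final write-up: the smoothing of the discontinuity of $\rho_\varepsilon,\sigma_\varepsilon$ across $\partial B_\varepsilon$ should be done in an annulus $B_{2\varepsilon}\setminus B_\varepsilon$ (as the paper does) so that the perimeter terms $\vert\partial D\vert_{\sigma_\varepsilon}$ in the Cheeger quotients are unambiguous, and the absorption of the $\varepsilon^{2-\delta}\Vert u\Vert_{L^2}^2$ term hidden in $\Vert u\Vert_{H^1}^2$ on the left-hand side should be stated, though both are routine.
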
 

\begin{proof}

We start by proving the result for the unit ball $B^n\subset \R^n$ and then explain how to deduce it for  any compact Riemannian manifold.   
For every $r \in (0, 1)$  we denote by $B( r )$ the ball of radius $r$ centered at the origin and by $A_r$ the annulus $B^n\setminus B( r )$. In the sequel, whenever we integrate over a Euclidean set, the integration is implicitely made with respect  to the standard Lebesgue's measure. 

  \smallskip
\noindent \emph{Proof of (i)}: 
For every $\varepsilon \in (0,\frac 12)$  we  define a smooth nonincreasing radial density  $\sigma_\varepsilon$  on $B^n$ such that $\sigma_\varepsilon = \frac{1}{\varepsilon ^{1+a}}$, with $a\in(0,1)$ (e.g. $a=\frac 12$) 
inside  $B^n(\varepsilon)$ and $\sigma_\varepsilon = b_\varepsilon$  in $B^n\setminus B(2\varepsilon)$,
 where $b_\varepsilon$ is chosen so that 
$
\int_{B^n}\sigma_\varepsilon   = \omega_n,
$
the volume of $B^n$. 
We then have
$$
\int_{B(\varepsilon)} \sigma_\varepsilon =  \omega_n \varepsilon^{n-1-a}\qquad \mbox{ and }\qquad
\int_{A_{2\varepsilon}}\sigma_\varepsilon =\omega_n(1-2^n\varepsilon^n)b_\varepsilon.
$$
Since $
\int_{B^n} \sigma_\varepsilon   = \omega_n
$ and $b_\varepsilon\le \sigma_\varepsilon\le \varepsilon^{-1-a}$ on $B(2\varepsilon)\setminus B(\varepsilon)$,
we have
$$
 \omega_n \varepsilon^{n-1-a}+b_\varepsilon\omega_n(1-\varepsilon^n)\le\omega_n\le \omega_n 2^n \varepsilon^{n-1-a}+b_\varepsilon\omega_n(1-2^n\varepsilon^n),
$$
that is
\begin{equation} \label{controlb}
\frac{1-2^n\varepsilon^{n-1-a}}{1-2^n\varepsilon^n}\le b_\varepsilon\le \frac{1-\varepsilon^{n-1-a}}{1-\varepsilon^n}.
\end{equation}
Now, the Cheeger constant $h_{\sigma_\varepsilon,\sigma_\varepsilon}(B^n)$ satisfies 
$$h_{\sigma_\varepsilon,\sigma_\varepsilon}(B^n)\le \frac{\vert \partial B(2\varepsilon)\vert_{\sigma_\varepsilon}}{\vert B(2\varepsilon)\vert_{\sigma_\varepsilon} }\le \frac{\vert\partial B(2\varepsilon)\vert_{\sigma_\varepsilon}}{\vert B(\varepsilon)\vert_{\sigma_\varepsilon} } = \frac{nb_\varepsilon \omega_n(2\varepsilon)^{n-1}}{\omega_n\varepsilon^{n-1-a}}\le n2^{n-1}\varepsilon^{a}.$$
On the other hand, for $r_0 = \left(\frac14\right)^{\frac 1n}$ we have $\vert B( r_0 )\vert_{\sigma_\varepsilon} <\omega_n(\varepsilon^{n-1-a}+ \frac 14b_\varepsilon)  <\frac 12 \omega_n $ when $\varepsilon$ is sufficiently small, so that
$$h_{1,\sigma_\varepsilon}(B^n)\le \frac{\vert \partial B( r_0 )\vert_{\sigma_\varepsilon} }{\vert B( r_0 )\vert  } = \frac{n\omega_n r_0^{n-1}b_\varepsilon}{ \omega_n r_0^n}\le 4^{\frac 1n}n.$$
Hence,  the product $h_{1,\sigma_\varepsilon} (B^n)h_{\sigma_\varepsilon,\sigma_\varepsilon}(B^n)$ tends to zero as $\varepsilon\to 0$. 
Regarding the first positive eigenvalue $\mu_1( 1,\sigma_\varepsilon) $, if $f$ is a corresponding eigenfunction,  then $\int_{B^n} f  =0$ and 
 $$\mu_1( 1,\sigma_\varepsilon) = \frac{\int_{B^n} \vert\nabla f\vert^2\sigma_\varepsilon  }{\int_{B^n} f^2  } \ge b_\varepsilon \frac{\int_{B^n} \vert\nabla f\vert^2  }{\int_{B^n} f^2  }\ge b_\varepsilon\lambda_1(B^n, g_E)$$
with $b_\varepsilon\ge \frac12$ for sufficiently small $\varepsilon$ according to \eqref{controlb}.

\smallskip
Now, given a Riemannian manifold $(M,g)$, we fix a point $x_0$ and choose $\delta>0$ so that the geodesic ball $B(x_0,\delta)$ is $2$-quasi-isometric to the Euclidean ball of radius $\delta$. 
In the Riemannian manifold $(M,\frac1{\delta^2}g)$, the  ball $B(x_0,1)$ is 2-quasi-isometric to the Euclidean ball $B^n$. 
We define $\sigma_\varepsilon$ in  $B(x_0,1)$ as  the pull back of the  function $\sigma_\varepsilon$ constructed above, and extend it by   $b_\varepsilon $ in  $M\setminus B(x_0,1)$. Because of (\ref{controlb}), we easily see that $\fint_M \sigma_\varepsilon v_g$ stays bounded independently from $\varepsilon$. We can also  check that    $h_{1,\sigma_\varepsilon}(M)$ and $h_{\sigma_\varepsilon,\sigma_\varepsilon}(M)$ have the same behavior as before and that (since $\sigma_\varepsilon\ge b_\varepsilon\ge \frac 12$) the eigenvalue $\mu_1^{\delta^{-2}g}(1,\sigma_{\varepsilon})$ is bounded from below by $\frac{1}{2}\lambda_1(M,\delta^{-2}g)$ which is a positive constant $C$ independent  of $\varepsilon$. Thus, $\mu_1^{g}(1,\sigma_{\varepsilon})= \delta^2\mu_1^{\delta^{-2}g}(1,\sigma_{\varepsilon})\ge C\delta^2.$

  \medskip
\noindent
 \emph{Proof of (ii)}: As before we define  the density  $\rho_\varepsilon\in L^\infty(B^n)$, $\varepsilon\in(0,\frac 12)$, 
by 
\begin{equation}
\rho_\varepsilon = \left\{
 \begin{array}{lll}
\frac{1}{\varepsilon ^{1+a}} &  \   & \text{if} \ x\in B(\varepsilon)\\
    b_\varepsilon =\frac{1-\varepsilon^{n-1-a}}{1-\varepsilon^n}  &  \   &  \text{if}  \  x\in B^n\setminus B(\varepsilon)
\end{array}
\right.
\end{equation}
so that  $
\int_{B^n}\rho_\varepsilon  dx = \omega_n
$ and $b_\varepsilon <1$. 
The corresponding Cheeger constant satisfies
$$
h_{\rho_\varepsilon, 1} \le \frac{\vert \partial B(\varepsilon) \vert}{\vert B(\varepsilon)\vert_{\rho_\varepsilon}} = \frac{n\omega_n \varepsilon^{n-1}}{\omega_n  \varepsilon^{n-1-a}}= n\varepsilon^{a}.
$$
which goes to zero as $\varepsilon \to 0$.

\smallskip
To prove that the first positive Neumann eigenvalue $\mu_1(\rho_\varepsilon,1)$ is uniformly bounded below we will first prove that  the first Dirichlet  eigenvalue $\lambda_1(\rho_\varepsilon)$ satisfies
\begin{equation}\label{dirichlet}
\lambda_1(\rho_\varepsilon) \ge \frac{1}{4} \lambda^*
\end{equation}
where $\lambda^*$ is the first Dirichlet eigenvalue of the Laplacian on $B^n$. Indeed, let $f$ be a positive eigenfunction associated to $\lambda_1(\rho_\varepsilon)$. Such a function is necessarily a nonincreasing radial function and it satisfies (with $b_\varepsilon\le 1$)
\begin{equation}\label{rayl}
\lambda_1(\rho_\varepsilon)= \frac{\int_{B(\varepsilon)}\vert \nabla f\vert^2 + \int_{A_\varepsilon}\vert \nabla f\vert^2}{\int_{B(\varepsilon)}f^2\rho_\varepsilon + \int_{A_\varepsilon}f^2\rho_\varepsilon}\ge  \frac{\int_{B(\varepsilon)}\vert \nabla f\vert^2 + \int_{A_\varepsilon}\vert \nabla f\vert^2}{\varepsilon^{-1-a}\int_{B(\varepsilon)}f^2 + \int_{A_\varepsilon}f^2}
\end{equation}
 For convenience we assume that  $f(\varepsilon)=1$.

\smallskip
If we denote by $\nu(A_\varepsilon)$ the first eigenvalue of the mixed eigenvalue problem on the annulus $A_\varepsilon$, with Dirichlet  conditions on the outer boundary and Neumann conditions on the inner boundary, then it is well known that  $\nu(A_\varepsilon)$ converges to $\lambda^*$ as $\varepsilon \to 0$ (see\cite{Anne}). Thus, using the min-max, we will have for sufficiently small $\varepsilon$, 
\begin{equation}\label{annulus}
\int_{A_\varepsilon}\vert \nabla f\vert^2\ge \nu(A_\varepsilon) {  \int_{A_\varepsilon}f^2}\ge \frac12 \lambda^*{  \int_{A_\varepsilon}f^2}.
\end{equation}
On the other hand, since $f-1$ vanishes along $\partial B(\varepsilon)$, its Rayleigh quotient is bounded below by $\frac{1}{\varepsilon^2}\lambda^*$, the first Dirichlet eigenvalue of $B(\varepsilon)$. Thus
\begin{equation}\label{bal}
\int_{B(\varepsilon)} \vert \nabla f\vert^2\ge \frac{1}{\varepsilon^2}
\lambda^*
 \int_{B(\varepsilon)}(f-1)^2 \ge \frac{1}{\varepsilon^2}
\lambda^*\left( \int_{B(\varepsilon)} f^2 -2\int_{B(\varepsilon)}f\right)
\end{equation}
with
$$\int_{B(\varepsilon)}f\le \left(\omega_n\varepsilon^n \int_{B(\varepsilon)} f^2\right)^{\frac12} .$$
Thus, if $\omega_n\varepsilon^n \le \frac 1{16} \int_{B(\varepsilon)} f^2$, then \eqref{bal} yields 
$$\int_{B(\varepsilon)} \vert \nabla f\vert^2\ge \frac{1}{2\varepsilon^2}
\lambda^*
 \int_{B(\varepsilon)}f^2 > \frac{1}{2}
\lambda^* \varepsilon^{-1-a}\int_{B(\varepsilon)}f^2 $$
which, combined with \eqref{annulus} and \eqref{rayl}, implies \eqref{dirichlet}. 

Assume now that $\omega_n\varepsilon^n \ge \frac 1{16} \int_{B(\varepsilon)} f^2$ and let us prove the following:
 \begin{equation}\label{annulus2}
\int_{A_\varepsilon}\vert \nabla f\vert^2\ge  \left\{
 \begin{array}{lll}
  \frac{n(n-2)}{16\varepsilon^{1-a}} \ \varepsilon^{-1-a}\int_{B(\varepsilon)}f^2  &  \   & \text{if} \ n\ge 3\\
   \frac{1}{8 \varepsilon^{1-a}\ln(1/ \varepsilon)} \ \varepsilon^{-1-a}\int_{B(\varepsilon)}f^2 &  \   &  \text{if}  \  n=2
\end{array}
\right.
\end{equation}
which would imply for sufficiently small $\varepsilon$, 
 \begin{equation}\label{annulus3}
 \int_{A_\varepsilon}\vert \nabla f\vert^2\ge \frac 12 \lambda^*\varepsilon^{-1-a}\int_{B(\varepsilon)}f^2
 \end{equation}
enabling us to deduce  \eqref{dirichlet} from \eqref{rayl} and \eqref{annulus}.
Indeed, since $f(\varepsilon ) =1$ and $f(1)=0$, one has 
$
 \int_{\varepsilon}^{1} f' =-1.
$
Therefore, applying the Cauchy-Schwarz inequality to the product $f' = \left( f'r^{(n-1)/2}\right) r^{-(n-1)/2}$, we get
$$\frac 1{n\omega_n}\int_{A_\varepsilon}\vert \nabla f\vert^2 =\int_{\varepsilon}^{1} f'^2 r^{n-1}  \ge  \left(  \int_{\varepsilon}^{1} f' \right)^2 \left(\int_{\varepsilon}^{1}\frac{1}{r^{n-1}}\right)^{-1} \ge \frac 1{  \int_{\varepsilon}^{1}\frac{1}{r^{n-1}}}
$$
with 
\begin{equation}\label{calc1}
\int_{\varepsilon}^{1}\frac{1}{r^{n-1}} =  \left\{
 \begin{array}{lll}
  \frac1{n-2}\left( \frac 1{\varepsilon^{n-2}}-1 \right) <  \frac 1{n-2} \frac 1{\varepsilon^{n-2}} &  \   & \text{if} \ n\ge 3\\
  \ln({1} /\varepsilon)  &  \   &  \text{if}  \  n=2
\end{array}
\right.
\end{equation}
Therefore, 
 \begin{equation}\label{annulus1}
\int_{A_\varepsilon}\vert \nabla f\vert^2\ge  \left\{
 \begin{array}{lll}
  {n(n-2)\omega_n}\varepsilon^{n-2} &  \   & \text{if} \ n\ge 3\\
   \frac{2\pi}{\ln(1/ \varepsilon)}  &  \   &  \text{if}  \  n=2
\end{array}
\right.
\end{equation}
which gives \eqref{annulus2} since  $\omega_n\varepsilon^n \ge \frac 1{16} \int_{B(\varepsilon)} f^2$.

\smallskip
Let us check now that the first positive Neumann eigenvalue is also uniformly bounded from below.
Indeed, let $f$ be a Neumann eigenfunction with $\Delta f=-\mu_1(\rho_\varepsilon,1) \rho_\varepsilon f$. 
If $f$ is radial, then $\mu_1(\rho_\varepsilon,1)\ge \lambda_1(\rho_\varepsilon)\ge \frac{1}{4}\lambda^*$ (there exists $r_0<1$ with $f(r_0)=0$ so that  $f$ is a Dirichlet eigenfunction on the ball $B(r_0)$).
If  $f$ is not radial, then, up to averaging (or assuming that $f$ is orthogonal to radial functions), one can assume  w.l.o.g. that $\int_{\Sp^{n-1}( r )}   f d\theta=0$ for every $r<1$.
Thus, $\int_{\Sp^{n-1}( r )} \vert\nabla^{0} f\vert^2 d\theta\ge \frac{n-1}{r^2}\int_{\Sp^{n-1}( r )}   f^2 d\theta$, where $\nabla^0f$ is the tangential part of $\nabla f$. Hence,
$$\int_{B^n} \vert\nabla f\vert^2 =\int_0^1 r^{n-1} dr\int_{\Sp^{n-1}( r )} \vert\nabla f\vert^2 d\theta\ge (n-1) \int_0^1 r^{n-1} dr\int_{\Sp^{n-1}( r )}  \left( \frac{f}r\right)^2 d\theta$$
$$ \qquad \qquad\qquad \qquad \qquad \qquad \qquad \qquad = (n-1)\int_{B^n} \left( \frac{f}r\right)^2 \ge(n-1) \int_{B^n} f^2 \rho_\varepsilon$$
since $\rho_\varepsilon ( r )\le \frac 1{r^2}$ everywhere. Thus, in this case, 
$\mu_1(\rho_\varepsilon,1)\ge n-1$. Finally
$$\mu_1(\rho_\varepsilon,1)\ge \min (n-1,\frac{1}{4}\lambda^*).$$

As before, this construction can be implemented in any Riemannian manifold $(M,g)$, using a quasi-isometry argument, Proposition \ref{zerorho} and Corollary \ref{zero_rho}. 
\end{proof}

\smallskip
A relevant problem is to know if a Buser's type inequality can be obtained in this context under assumptions on the volume  of balls with respect to $\sigma$ and $\rho$.

\section{Bounding the eigenvalues from above} \label{above}

\subsection{Unboundedness of eigenvalues if only one parameter among $g, \rho,\sigma$  is fixed }

Let $(M,g_0)$ be a compact Riemannian manifold, possibly with boundary. Our first observation in this section is that the eigenvalues $\mu_k^g( \rho,\sigma)$ are not bounded from above when one quantity among $g\in [g_0], \rho \in \mathcal R_0,\sigma  \in \mathcal R_0$ is fixed and the two others are varying (here $\mathcal R_0=\{\phi\in C^\infty(M)\ : \phi>0 \mbox{ and}  \fint_M\phi \, v_{g_0}=1\}$). 

\smallskip

Let us first recall that  the authors and Savo  have proved in \cite{CES} that on any compact Riemannian manifold $(M,g_0)$ there exists a sequence of densities $\rho_j\in \mathcal R_0$ such that $\mu_1^{g_0}(\rho_j,\rho_j) $ tends to $+\infty$ with $j$. In particular, 
\begin{equation}\label{unbound1}
\sup_{\fint_M \rho  v_{g_0}= 1, \fint_M \sigma  v_{g_0}=1} \mu_1^{g_0}(\rho,\sigma) \ge \sup_{\fint_M \rho  v_{g_0}= 1}  \mu_1^{g_0}(\rho ,\rho ) =+\infty
\end{equation}
A natural subsequent question is: Can one construct examples of  $g\in [g_0]$ and $\rho\in \mathcal R_0$ (resp. $\sigma\in \mathcal R_0$) so that  $\mu_1^g(\rho,1) $  (resp. $\mu_1^g(1,\sigma)$) is as large as desired ?


\begin{prop}\label{unbound} Let $(M,g_0)$ be a compact Riemannian manifold, possibly with boundary. Then
\begin{equation}\label{unbound2}
\sup_{g\in [g_0],  \, \rho \in \mathcal R_0} \mu_1^g(\rho,1)  =+\infty
\end{equation}
and
\begin{equation}\label{unbound3}
\sup_{g\in [g_0], \,  \sigma \in \mathcal R_0} \mu_1^g(1,\sigma)  =+\infty.
\end{equation}

\end{prop}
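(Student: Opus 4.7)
The plan is to reduce both \eqref{unbound2} and \eqref{unbound3} to the Witten Laplacian unboundedness \eqref{unbound1} of \cite{CES} via a conformal change of variable. Writing $g=\phi g_0$ with $\phi>0$ smooth, substitution in the Rayleigh quotient (using $v_g=\phi^{n/2}v_{g_0}$ and $|\nabla_g u|_g^2=\phi^{-1}|\nabla_{g_0}u|^2$) yields
\begin{equation}\label{plan-conf}
\mu_1^g(\tilde\rho,1)=\mu_1^{g_0}\bigl(\tilde\rho\phi^{n/2},\phi^{(n-2)/2}\bigr),\qquad\mu_1^g(1,\sigma)=\mu_1^{g_0}\bigl(\phi^{n/2},\sigma\phi^{(n-2)/2}\bigr),
\end{equation}
while the constraint $g\in[g_0]$ reads $\fint\phi^{n/2}v_{g_0}=1$. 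Applying Jensen's inequality to $x\mapsto x^{n/2}$ (convex for $n\ge2$) and to $x\mapsto 1/x$ then gives $\fint\phi\,v_{g_0}\le 1\le\fint\phi^{-1}v_{g_0}$.

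For \eqref{unbound3} I would take $\sigma=c\phi$ with $c:=|M|_{g_0}/\int\phi\,v_{g_0}\ge 1$; this puts $\sigma$ in $\mathcal R_0$ and gives $\sigma\phi^{(n-2)/2}=c\phi^{n/2}$, so that, using the homogeneity $\mu_1(\rho,c\sigma)=c\mu_1(\rho,\sigma)$, identity \eqref{plan-conf} yields $\mu_1^g(1,\sigma)=c\,\mu_1^{g_0}(\phi^{n/2},\phi^{n/2})\ge\mu_1^{g_0}(\phi^{n/2},\phi^{n/2})$. Choosing $\phi=\rho_j^{2/n}$ with $\rho_j\in\mathcal R_0$ as in \eqref{unbound1}, so that $\phi^{n/2}=\rho_j$ and $\fint\phi^{n/2}=1$, then drives the right-hand side to $+\infty$. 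Dually, for \eqref{unbound2} in dimension $n\ge 3$, the choice $\tilde\rho=c\phi^{-1}$ with $c:=|M|_{g_0}/\int\phi^{-1}v_{g_0}\le 1$ puts $\tilde\rho$ in $\mathcal R_0$ and gives $\tilde\rho\phi^{n/2}=c\phi^{(n-2)/2}$, so that $\mu_1^g(\tilde\rho,1)=\mu_1^{g_0}(\phi^{(n-2)/2},\phi^{(n-2)/2})/c\ge\mu_1^{g_0}(\phi^{(n-2)/2},\phi^{(n-2)/2})$. Setting $\phi=\rho_j^{2/(n-2)}$, where $\rho_j$ is the Witten family of \cite{CES} rescaled so that $\int\rho_j^{n/(n-2)}v_{g_0}=|M|_{g_0}$ (permissible because $\rho\mapsto\mu_1^{g_0}(\rho,\rho)$ is scale-invariant), then makes the Witten eigenvalue diverge.

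The remaining case, $n=2$ of \eqref{unbound2}, is genuinely different because $\phi^{(n-2)/2}\equiv 1$ collapses the reduction above and \eqref{plan-conf} becomes $\mu_1^g(\tilde\rho,1)=\mu_1^{g_0}(\tilde\rho\phi,1)$. Here I would pick two disjoint geodesic balls $B_1,B_2\subset M$ of equal $g_0$-volume $v$ and put $\tilde\rho=A\chi_{B_1}+\beta$ and $\phi=A\chi_{B_2}+\beta$ (smoothed using Proposition \ref{smoothing}), with $A,\beta>0$ chosen so that $\tilde\rho$ and $\phi$ both lie in $\mathcal R_0$ (forcing $A\sim|M|_{g_0}/v$ as $\beta\to 0$). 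Then $\tilde\rho\phi$ equals $A\beta$ on $M_0:=B_1\cup B_2$ and $\beta^2$ elsewhere, so the normalized density $\tilde\rho\phi/(A\beta)$ satisfies the hypotheses of Proposition \ref{zerorho} with parameter $\beta/A\to 0$, whence $\mu_1^{g_0}\bigl(\tilde\rho\phi/(A\beta),1\bigr)\to\gamma_1(M_0,1)>0$ (positivity of $\gamma_1$ follows from connectedness of $M$). Combining with the scaling identity $\mu_1^{g_0}(\tilde\rho\phi,1)=\mu_1^{g_0}(\tilde\rho\phi/(A\beta),1)/(A\beta)$ and $A\beta\to 0$ gives $\mu_1^g(\tilde\rho,1)\to+\infty$. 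The main obstacle is reconciling the three $g_0$-normalization constraints (on $\phi^{n/2}$, $\tilde\rho$, $\sigma$) while retaining enough freedom to blow up the induced Witten eigenvalue; the H\"older/Jensen inequalities between successive powers of $\phi$ and the scale invariance of $\mu_1^{g_0}(\rho,\rho)$ are precisely what make this possible.
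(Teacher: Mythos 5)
Your argument is correct, and for \eqref{unbound3} it coincides with the paper's proof (same conformal identity $\mu_1^{\phi g_0}(1,\sigma)$ versus the Witten eigenvalue $\mu_1^{g_0}(\phi^{n/2},\phi^{n/2})$, same H\"older/Jensen step to fix the normalization). For \eqref{unbound2}, however, you take a genuinely different route. The paper uses a single elementary construction valid for all $n\ge 2$: it chooses $\rho_\varepsilon$ with $\varepsilon\le\rho_\varepsilon\le 2$ and the conformal factor $\phi_\varepsilon^{n/2}$ proportional to $\rho_\varepsilon^{-1}$, so that the transported density $\rho_\varepsilon\phi_\varepsilon^{n/2}$ is a \emph{constant} of order $\varepsilon$ while the transported conductivity $\phi_\varepsilon^{(n-2)/2}$ stays bounded below by a power of $\varepsilon$; this gives the direct bound $\mu_1^{g_\varepsilon}(\rho_\varepsilon,1)\ge c\,\varepsilon^{-2/n}\lambda_1(M,g_0)$, using only $\lambda_1(M,g_0)>0$. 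You instead choose $\tilde\rho\propto\phi^{-1}$ so that $\tilde\rho\phi^{n/2}$ is proportional to $\phi^{(n-2)/2}$, landing on the Witten eigenvalue $\mu_1^{g_0}(\phi^{(n-2)/2},\phi^{(n-2)/2})$ and importing the unboundedness result of \cite{CES}. This buys an attractive duality between \eqref{unbound2} and \eqref{unbound3} (both reduce to the Witten Laplacian), at the cost of (a) relying on the nontrivial construction of \cite{CES} where the paper needs nothing beyond $\lambda_1(M,g_0)>0$, and (b) breaking down when $n=2$, forcing your separate two-ball argument. That $n=2$ argument is also correct (the positivity of $\gamma_1(B_1\cup B_2,1)$ does follow from the connectedness of $M\setminus(B_1\cup B_2)$ via the harmonic-extension coupling, and the smoothing is routine), but note it can be shortcut: in your construction $\sup_M(\tilde\rho\phi)\le(A+\beta)\beta\to 0$, so the crude monotonicity $\mu_1^{g_0}(\tilde\rho\phi,1)\ge\lambda_1(M,g_0)/\sup_M(\tilde\rho\phi)$ already gives the blow-up without invoking Proposition \ref{zerorho} --- which is exactly the mechanism the paper exploits in all dimensions.
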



\begin{proof} 
To prove \eqref{unbound2}, the idea is to deform both the metric and the density so that  $\rho_\varepsilon v_{g_\varepsilon}$ becomes everywhere small. Indeed,  let $V$ be an open set of $M$ with $\vert V\vert_{g_0} \ge \frac 1 {10}\vert M\vert_{g_0} $. For every $\varepsilon \in (0,1)$, we consider a continuous density  $\rho_\varepsilon $ such that $\rho_\varepsilon =\varepsilon$ on $V$, $\varepsilon \le \rho_\varepsilon \le 2 $ everywhere on $M$, and $\fint_M \rho_\varepsilon  v_{g_0}= 1$. Define $g_\varepsilon= \phi_\varepsilon^2 g_0$ with  
$$\phi_\varepsilon^n=\frac{\vert M\vert_{g_0} }{\int_M \rho_\varepsilon^{-1}  v_{g_0}} \ \frac 1 \rho_\varepsilon $$
so that $\vert M\vert_{g_\varepsilon} =\int_M \phi_\varepsilon^n  v_{g_0}= \vert M\vert_{g_0} $ (here $n$ denotes the dimension of $M$).
Now, we observe that 
$$\frac{1}{\varepsilon} \vert M\vert_{g_0}\ge {\int_M \rho_\varepsilon^{-1}  v_{g_0}}\ge {\int_V \rho_\varepsilon^{-1}  v_{g_0}} =\frac{1}{\varepsilon} \vert V\vert_{g_0}\ge \frac{1}{10 \varepsilon} \vert M\vert_{g_0}.$$
Thus,
$$\phi_\varepsilon^n\le \frac{10 \varepsilon}{\rho_\varepsilon}  $$
and, since $\rho_\varepsilon \le 2  $, 
$$\phi_\varepsilon^n\ge \frac{\varepsilon}{\rho_\varepsilon}  \ge \frac\varepsilon 2. $$
Now, for any smooth function $u$ on $M$ one has (with $ \frac\varepsilon 2\le\phi_\varepsilon^n\le \frac{10 \varepsilon}{\rho_\varepsilon}  $)
$$\frac{\int_M  \vert\nabla u\vert^2  v_{g_\varepsilon}}{\int_M u^2 \rho_\varepsilon  v_{g_\varepsilon}} = \frac{\int_M  \vert\nabla u\vert^2 \phi_\varepsilon^{n-2} v_{g_0}}{\int_M u^2 \rho_\varepsilon \phi_\varepsilon^n v_{g_0}} \ge  \frac 1 {2^{\frac{n-2}n}10\varepsilon^{\frac 2n}}   \frac{\int_M  \vert\nabla u\vert^2   v_{g_0}}{\int_M u^2  v_{g_0}}.$$
Therefore
$$\mu_1^{g_\varepsilon}(\rho_\varepsilon ,1)\ge \frac 1 {2^{\frac{n-2}n}10\varepsilon^{\frac 2n}}  \mu_1^{g_0}(1 , 1)$$
which tends to infinity as $\varepsilon$ goes to zero. 

\smallskip

To prove \eqref{unbound3} we first observe that, for any positive density $ \sigma$, one has, $\forall u\in C^2(M)$,
$$R_{(\sigma g_0, 1, \sigma)}(u) =R_{(g_0,   \sigma^{\frac n2}, \sigma^{\frac n2})}(u)$$
Thus, 
$$\mu_k^{\sigma g_0}( 1, \sigma)= \mu_k^{ g_0}(  \sigma^{\frac n2}, \sigma^{\frac n2}).$$
According to \cite{CES}, there exists on $M$ a sequence $ \sigma_j$ of positive densities such that $\int_M \sigma_j^{\frac n2}v_{g_0}=\vert M\vert_{g_0} $ and $  \mu_k^{ g_0}(\sigma_j^{\frac n2}, \sigma_j^{\frac n2})$  tends to infinity with $j$. We set $g_j=\sigma_j g_0 \in [g_0]$. Hölder inequality implies that 
$$\int_M \sigma_j v_{g_0}\le \left(\int_M \sigma_j^{\frac n2}v_{g_0}\right)^{\frac 2n} \vert M\vert_{g_0}^{1-\frac 2n} =\vert M\vert_{g_0}.$$
Setting $\sigma'_j=\frac {\sigma_j}{\fint_M \sigma_j v_{g_0}} \in \mathcal R_0$ we get
$$\mu_k^{ g_j}(1, \sigma'_j)=\frac 1{\fint_M \sigma_j v_{g_0}} \mu_k^{ \sigma_jg_0}( 1, \sigma_j)\ge \mu_k^{ \sigma_jg_0}( 1, \sigma_j)=\mu_k^{g_0}(  \sigma_j^{\frac n2}, \sigma_j^{\frac n2})$$
which proves that $\mu_k^{g_j} (1, \sigma'_j)$ tends to infinity with $j$.
\end{proof}

\subsection{Upper bounds for $\mu_k(\rho,1)$ and  $\mu_k(1,\sigma)$ }
Let $(M,g)$ be a compact Riemannian manifold of dimension $n\ge 2$, possibly with boundary. According to the result by 
Hassannezhad \cite{asma1} one has, when $M$ is a closed manifold,
\begin{equation}\label{asmaconf}
\lambda_k(M,g)\le \frac 1{\vert M\vert_g^{\frac 2n}} \left( A_nk^{\frac 2n}+B_n V([g])^{\frac 2n}\right)
\end{equation}
 where  $A_n$ and $B_n$ are two constants which only depend on $n$, and $V([g])$  is a conformally invariant geometric quantity defined as follows:
 $$ V([g])=\inf\{\vert M\vert_{g_0} \ : \ g_0 \mbox{ is conformal to } g \mbox{ and } Ric_{g_0}\ge -(n-1)g_0\}$$
 where $Ric_{g_0}$ is the Ricci curvature of $g_0$.
Now   for every positive $\rho$ such that $\fint_M \rho v_g =1$, we have $V([\rho^{\frac 2n}g])=V([g]) $, $\vert M\vert_{\rho^{\frac 2n}g}=\vert M\vert_g$ and $\lambda_k(M,\rho^{\frac 2n}g)=\mu_k^g(\rho, \rho^{\frac {n-2}n} )$. Hence, the inequality  \eqref{asmaconf} implies  that for every positive $\rho$ such that $\fint_M \rho v_g =1$, 
 \begin{equation}\label{asmaconfrho}
 \mu_k^g(\rho, \rho^{\frac {n-2}n} )\le \frac 1{\vert M\vert_g^{\frac 2n}} \left( A_nk^{\frac 2n}+B_n V([g])^{\frac 2n}\right)
.\end{equation}
This estimate is 
in contrast to what happens for  the Witten Laplacian where  we have 
$
\sup_{\fint_M \rho  v_{g}= 1} \mu_1^g(\rho,\rho)  =+\infty
$  (see \cite{CES}).  

\smallskip
Our aim in this section is to discuss the boundedness of $\mu_k^g(\rho, \sigma)$ in the two remaining important cases: $\mu_k^g(\rho,1)$ and  $\mu_k^g(1,\sigma)$. In \cite[Theorem 2.1]{CES} it has been shown that  the use of the GNY (Grigor'yan-Netrusov-Yau) method \cite{GNY} leads to the following estimate  
\begin{equation}\label{rhoboundGNY}
 \mu_k^g(\rho,1)  \fint_M \rho  v_{g}\le C( [g])\left( \frac k{\vert M\vert_g}\right)^{\frac2n}
 \end{equation}
where $C( [g])$ is a constant which only depends on the conformal class of the metric $g$.

\smallskip

This approach fails in the dual situation where $\sigma$ is varying while $\rho$ is fixed. Indeed, the GNY method leads to an upper bound of $\mu_k^g(1,\sigma) $ in terms of   the $L^{\frac{n-2}n}$-norm of $\sigma $ (instead of the $L^1$-norm). However, using the techniques developed by Colbois and Maerten in \cite{cm}, it is possible to obtain an inequality of the form 
\begin{equation}\label{sigmaboundcm}
 \mu_k^g(1,\sigma)  \le C( M,g)\left( \frac k{\vert M\vert_g}\right)^{\frac2n} \fint_M \sigma  v_{g}
 \end{equation}
where $C( M,g)$ is a geometric constant which does not depend on $\sigma$ (unlike \eqref{rhoboundGNY}, this method of proof does not allow to obtain a conformally invariant constant instead of $C(M,g)$). 

\smallskip

In what follows, we will establish inequalities of the type  \eqref{asmaconfrho} for $\mu_k(\rho,1)$ and  $\mu_k(1,\sigma)$.

\begin{thm}\label{bound-rho1} Let $M$ be a bounded open domain possibly with boundary of class $C^1$ of a  complete Riemannian manifold $(\tilde M, \tilde g_0)$ of dimension $n\ge 2$  (with $\tilde M=M$ if $\partial M=\emptyset$). Assume that  $Ric_{\tilde g_0}\ge -(n-1)\tilde g_0$ and let $g_0=\tilde g_0 \vert _M$.  For every metric $g $ conformal to $g_0$ and every  density 
$\rho $  with $\fint_{M}\rho v_g =1$, one has 
\begin{equation}\label{asmarho1}
 \mu_k^g(\rho,1)  \le   \frac 1{\vert M\vert_g^{\frac 2n}}\left(A_n  k^{\frac 2n}+ B_n\vert M\vert_{g_0}^{\frac 2n}\right)
\end{equation}
where $A_n$ and $B_n$ are two constants which depend only on  the dimension $n$.
\end{thm}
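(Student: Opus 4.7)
The plan is to adapt Hassannezhad's argument from \cite{asma1} to the weighted setting. Write $g = \phi^2 g_0$, so that $v_g = \phi^n v_{g_0}$ and $|\nabla u|_g^2 v_g = \phi^{n-2}|\nabla u|_{g_0}^2 v_{g_0}$. Setting $d\mu := \rho\, v_g$, which has total mass $\mu(M) = |M|_g$ by hypothesis, the Rayleigh quotient takes the form
$$R_{(g,\rho,1)}(u) = \frac{\int_M \phi^{n-2}|\nabla u|_{g_0}^2 v_{g_0}}{\int_M u^2\, d\mu}.$$
The idea is to carry out all the metric analysis with respect to $g_0$, which admits Bishop-Gromov volume comparison (since $Ric_{g_0} \ge -(n-1)g_0$), while the mass on which the test functions are concentrated is measured by $\mu$.

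First, I would apply a Grigor'yan-Netrusov-Yau covering lemma, in the refined form of Hassannezhad, to the metric measure space $(M, d_{g_0}, \mu)$ for the integer $k+1$. This exploits the two natural scales in Bishop-Gromov (the Euclidean regime $r \le 1$ and the hyperbolic regime $r \ge 1$) and produces $k+1$ sets $A_i \subset M$, each either a $d_{g_0}$-annulus or $d_{g_0}$-ball with outer radius $r_i$, together with pairwise disjoint enlargements $\widehat A_i \supset A_i$, such that $\mu(A_i) \ge c_n |M|_g/k$ and $|\widehat A_i|_{g_0}$ obeys a dichotomy: $|\widehat A_i|_{g_0} \le C r_i^n$ at small scales, while at large scales $|\widehat A_i|_{g_0} \lesssim |M|_{g_0}/k$. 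Pick $d_{g_0}$-Lipschitz cutoffs $u_i$ supported in $\widehat A_i$, equal to $1$ on $A_i$, with $|\nabla u_i|_{g_0} \le C/r_i$; since their supports are disjoint, the $u_i$ span a $(k+1)$-dimensional subspace of $H^1(M)$.

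To estimate each Rayleigh quotient, I would apply H\"older's inequality with conjugate exponents $n/(n-2)$ and $n/2$ to the conformal factor in the numerator:
$$\int_{\widehat A_i} \phi^{n-2}|\nabla u_i|_{g_0}^2 v_{g_0} \le \frac{C}{r_i^2}\Bigl(\int_{\widehat A_i}\phi^n v_{g_0}\Bigr)^{(n-2)/n}|\widehat A_i|_{g_0}^{2/n} \le \frac{C\,|M|_g^{(n-2)/n}\,|\widehat A_i|_{g_0}^{2/n}}{r_i^2}.$$
Dividing by $\int_M u_i^2\, d\mu \ge c_n|M|_g/k$ gives $R(u_i) \le Ck\,|\widehat A_i|_{g_0}^{2/n}/(r_i^2 |M|_g^{2/n})$. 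The small-scale regime of the covering contributes the term $A_n k^{2/n}/|M|_g^{2/n}$ and the large-scale regime contributes $B_n |M|_{g_0}^{2/n}/|M|_g^{2/n}$; since the $u_i$ form a $(k+1)$-dimensional trial space, the min-max principle \eqref{eq-1} then delivers the desired inequality.

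The main obstacle is the calibration of the covering lemma to this \emph{two-measure} setting: the measure for which doubling comes from the curvature hypothesis is $v_{g_0}$, whereas the measure appearing in the denominator is $\mu = \rho v_g$. Adapting Hassannezhad's construction so that the $k+1$ annuli simultaneously carry enough $\mu$-mass and inherit the correct Bishop-Gromov scale bound on $|\widehat A_i|_{g_0}/r_i^n$ -- precisely what is needed, via the H\"older bridge above, to produce the exponent $k^{2/n}$ rather than $k$ -- is the technical heart of the proof.
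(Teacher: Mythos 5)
Your overall architecture matches the paper's: apply Hassannezhad's covering result to the metric measure space $(M,d_{g_0},\rho\,v_g)$, exploit Bishop--Gromov for $g_0$, use the conformal invariance of the $n$-energy via H\"older, and conclude by min-max. But the step you yourself flag at the end as ``the technical heart'' is a genuine gap, and the reason it resists your calibration is that your H\"older split is the wrong one. You estimate
$\int_{\widehat A_i}\phi^{n-2}|\nabla u_i|_{g_0}^2\,v_{g_0}\le \frac{C}{r_i^2}\bigl(\int_{\widehat A_i}\phi^n v_{g_0}\bigr)^{(n-2)/n}|\widehat A_i|_{g_0}^{2/n}$ and then replace $\int_{\widehat A_i}\phi^n v_{g_0}$ by the \emph{global} quantity $|M|_g$. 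Chasing the constants: in the small-scale regime $|\widehat A_i|_{g_0}\le Cr_i^n$ gives $R(u_i)\le Ck/|M|_g^{2/n}$ (linear in $k$, not $k^{2/n}$), and in the large-scale regime $|\widehat A_i|_{g_0}\lesssim|M|_{g_0}/k$ gives $R(u_i)\le Ck^{1-2/n}|M|_{g_0}^{2/n}/|M|_g^{2/n}$ (an extra $k^{1-2/n}$). So the dichotomy you announce does not follow from the inequalities you wrote; neither term of \eqref{asmarho1} comes out.

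The fix, which is what the paper does, is to apply H\"older the other way: $\int_{\widehat A_i}\phi^{n-2}|\nabla u_i|_{g_0}^2\,v_{g_0}\le\bigl(\int_{\widehat A_i}|\nabla u_i|_{g_0}^n\,v_{g_0}\bigr)^{2/n}\,|\widehat A_i|_{g}^{1-2/n}$, keeping the \emph{$g$-volume local} and absorbing the scale $r_i$ into the $n$-energy. For an annulus of outer radius $2R\le 1$, Bishop--Gromov gives $\int|\nabla u_i|_{g_0}^n v_{g_0}\le (2/r)^n|B(a,r)|_{g_0}+R^{-n}|B(a,2R)|_{g_0}\le C_n$, a dimensional constant with no residual dependence on $r_i$; for the large-scale sets $V^{r_0}$ (fixed $r_0$) the $n$-energy is bounded by $r_0^{-n}|V^{r_0}|_{g_0}$. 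One then needs a \emph{simultaneous} pigeonhole making both $|G_j|_g\le|M|_g/(k+1)$ and $|G_j|_{g_0}\le|M|_{g_0}/(k+1)$ hold for $k+1$ of the sets --- this is why the paper starts from $3(k+1)$ disjoint pairs $(F_j,G_j)$ rather than $k+1$. The factor $(k+1)$ coming from the mass lower bound $\nu(F_j)\ge\nu(M)/(c^2(k+1))$ is then cancelled down to $(k+1)^{2/n}$ by $|G_j|_g^{1-2/n}\le(|M|_g/(k+1))^{1-2/n}$ in the first regime, and entirely cancelled in the second regime, yielding the two terms of \eqref{asmarho1}. Without relocating the $g$-volume into the local factor and adding the double pigeonhole, your argument cannot produce the exponent $2/n$ on $k$.
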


In the particular case where $(M,g)$ is a compact manifold without boundary, we can apply  Theorem \ref{bound-rho1} with $M =\tilde M$ and get immediately the following estimate which extends \eqref{asmaconf}:
\begin{equation}\label{asmarho2}
 \mu_k^g(\rho,1)  \le   \frac 1{\vert M\vert_g^{\frac 2n}}\left(A_n  k^{\frac 2n}+ B_nV([g])^{\frac 2n}\right).
\end{equation}
On the other hand, if $\tilde g$ is a metric on $\tilde M$ and if  $\mbox{ric}_0$ is  a  positive number such that $Ric_{\tilde g}\ge-(n-1)\mbox{ric}_0\ \tilde g$, then the metric $\tilde g_0=\mbox{ric}_0 \tilde g$ satisfies $Ric_{\tilde g_0}\ge-(n-1)\tilde g_0$ and   $\vert M\vert_{g_0}= \mbox{ric}_0^{n/2} \vert M\vert_g$, where $g=\tilde g \vert _M$ and $g_0=\tilde g_0 \vert _M$. Thus, we get

\begin{cor}\label{bound-rho} Let $M$ be a bounded open domain possibly with boundary of class $C^1$ of a  complete Riemannian manifold $(\tilde M, \tilde g)$ of dimension $n\ge 2$  (with $\tilde M=M$ if $\partial M=\emptyset$) and let $g=\tilde g \vert _M$.  For every  density 
$\rho $  with $\fint_{M}\rho v_g =1$, one has
\begin{equation}\label{asmarho}
 \mu_k^g(\rho,1)  \le  A_n\left(\frac k{\vert M\vert_{g}}\right)^\frac 2n+B_n \mbox{ric}_0
\end{equation}
where  $\mbox{ric}_0>0$ is  such that $Ric_{\tilde g}\ge-(n-1)\mbox{ric}_0\ \tilde g$. In particular, $\forall k\ge \vert M\vert_{g}  \mbox{ric}_0^{\frac n2}$,
\begin{equation}\label{asmarhoKroger}
 \mu_k^g(\rho,1)  \le  C_n\left(\frac k{\vert M\vert_{g}}\right)^\frac 2n
\end{equation}
with $C_n=A_n+B_n$.
\end{cor}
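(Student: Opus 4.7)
The plan is to reduce Corollary \ref{bound-rho} to Theorem \ref{bound-rho1} by a homothetic rescaling of the ambient metric, exactly as suggested in the paragraph preceding the corollary's statement. I would set $\tilde g_0 := \mbox{ric}_0\, \tilde g$. Under this constant rescaling the Ricci tensor is unchanged, so
\[
Ric_{\tilde g_0} \;=\; Ric_{\tilde g} \;\ge\; -(n-1)\mbox{ric}_0\, \tilde g \;=\; -(n-1)\tilde g_0,
\]
which is precisely the normalized lower bound required in the hypothesis of Theorem \ref{bound-rho1}. Moreover $(\tilde M, \tilde g_0)$ remains complete, and $g_0 := \tilde g_0\vert_M = \mbox{ric}_0\, g$ is conformal to $g$ (with constant conformal factor $\mbox{ric}_0$).

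Next I would apply Theorem \ref{bound-rho1} to the pair $(g, g_0)$. The density normalization $\fint_M \rho\, v_g = 1$ in the corollary is stated with respect to $g$, which is exactly the metric used in the theorem's hypothesis, so no adjustment of $\rho$ is needed. The theorem then yields
\[
\mu_k^g(\rho,1) \;\le\; \frac{1}{\vert M\vert_g^{2/n}}\left(A_n k^{2/n} + B_n \vert M\vert_{g_0}^{2/n}\right).
\]
A direct calculation using $g_0 = \mbox{ric}_0\, g$ shows $\vert M\vert_{g_0} = \mbox{ric}_0^{n/2}\, \vert M\vert_g$, hence $\vert M\vert_{g_0}^{2/n} = \mbox{ric}_0\, \vert M\vert_g^{2/n}$. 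Substituting into the previous display and distributing the prefactor produces exactly inequality \eqref{asmarho}.

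For the second inequality \eqref{asmarhoKroger}, I would observe that the hypothesis $k \ge \vert M\vert_g\, \mbox{ric}_0^{n/2}$ rearranges to $\mbox{ric}_0 \le (k/\vert M\vert_g)^{2/n}$. Plugging this bound into the $B_n\mbox{ric}_0$ term of \eqref{asmarho} merges it with the $A_n$-term and gives the single-term bound with constant $C_n = A_n + B_n$.

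There is no real obstacle in this argument: once Theorem \ref{bound-rho1} is at hand, the corollary reduces to the standard scaling identities $Ric_{c\tilde g} = Ric_{\tilde g}$ and $\vert M\vert_{c g} = c^{n/2}\vert M\vert_g$ for a positive constant $c$, together with the trivial observation that conformality and the density normalization are unaffected by this homothety. The only conceptual point worth flagging is that the scaling is applied on the \emph{background} $\tilde g$ used to control the Ricci curvature, while the eigenvalue problem and the mass condition remain with respect to the original metric $g$; once this separation of roles is made, the computation is routine.
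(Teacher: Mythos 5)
Your argument is correct and is exactly the paper's own derivation: set $\tilde g_0=\mbox{ric}_0\,\tilde g$, use the scale invariance of the Ricci tensor and $\vert M\vert_{g_0}^{2/n}=\mbox{ric}_0\vert M\vert_g^{2/n}$, and apply Theorem \ref{bound-rho1} with $g$ conformal to $g_0$. The passage to \eqref{asmarhoKroger} via $\mbox{ric}_0\le(k/\vert M\vert_g)^{2/n}$ is likewise the intended (and only) step.
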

Inequalities \eqref{asmarho2} and \eqref{asmarho} are conceptually much stronger than \eqref{rhoboundGNY}, especially since they lead to a Kröger type inequality \eqref{asmarhoKroger} for every $k$ exceeding  an explicit geometric threshold, independent of $\rho$ (it is well known that if the Ricci curvature is not nonnegative, then an inequality like \eqref{asmarhoKroger} cannot hold for every $k$, see \cite[Remark 1.2(iii)]{cm}).  

 \begin{thm}\label{bound-sigma} Let $M$ be a bounded open domain possibly with boundary of class $C^1$ of a  complete Riemannian manifold $(\tilde M, \tilde g)$ of dimension $n\ge 2$  (with $\tilde M=M$ if $\partial M=\emptyset$) and let $g=\tilde g \vert _M$. 
 For every positive density 
$\sigma $ on $M$ with $\fint_{M}\sigma v_g =1$ one has 
\begin{equation}\label{asmasigma}
 \mu_k^g(1,\sigma)  \le  A_n \left(\frac k{\vert M\vert_{g}}\right)^\frac 2n +B_n \mbox{ric}_0 
\end{equation}
 where  $\mbox{ric}_0>0$ is  such that $Ric_{\tilde g}\ge-(n-1)\mbox{ric}_0\ \tilde g$ and where $A_n$ and $B_n$ are two constants which depend only on  $n$. In particular, $\forall k\ge \vert M\vert_{g}  \mbox{ric}_0^{\frac n2}$,
\begin{equation}\label{asmasigmaKroger}
 \mu_k^g(1,\sigma)  \le  C_n\left(\frac k{\vert M\vert_{g}}\right)^\frac 2n
\end{equation}
with $C_n=A_n+B_n$.

\end{thm}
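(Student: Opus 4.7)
The plan is to adapt the packing/capacitor method of Colbois--Maerten \cite{cm} to the weighted setting by exploiting the normalisation $\fint_M\sigma\, v_g=1$ via a pigeonhole argument on the $\sigma$-mass carried by the annular shells of the capacitors. This is precisely what will force the proof to differ in nature from that of Theorem \ref{bound-rho1}: since $\sigma$ sits in the numerator of the Rayleigh quotient, the local mass $\int_{B}\sigma\,v_g$ cannot be controlled by a volume quantity alone, and the disjointness of the full doubled balls must be used to extract a small-mass subfamily.

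First I would construct a packing. Set $r=c_n\min\{(\vert M\vert_g/k)^{1/n},\ 1/\sqrt{\mbox{ric}_0}\}$ with $c_n$ small enough that the Bishop--Gromov theorem, valid thanks to $Ric_{\tilde g}\ge-(n-1)\mbox{ric}_0\,\tilde g$, yields $\vert B(x,s)\vert_{\tilde g}\le C_n s^n$ for every $x\in\tilde M$ and every $s\le 5r$. A Vitali-type extraction then produces points $x_1,\dots, x_N\in M$ so that the balls $B(x_i,2r)\subset\tilde M$ are pairwise disjoint, each satisfies $\vert B(x_i,r)\cap M\vert_g\ge c_n' r^n$, and
$$N\ge \min\{2(k+1),\ c_n''\,\vert M\vert_g\, r^{-n}\}.$$
When $\partial M\ne\emptyset$, the lower bound on $\vert B(x_i,r)\cap M\vert_g$ would be secured by centring the $x_i$ inside $M$ and using the $C^1$ regularity of $\partial M$ to compare with a half-ball.

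Next I would introduce Lipschitz cut-offs $\phi_i$ supported in $B(x_i,2r)\cap M$, equal to $1$ on $B(x_i,r)\cap M$, with $\vert\nabla\phi_i\vert\le 2/r$. The annular shells $S_i=B(x_i,2r)\setminus B(x_i,r)$ being disjoint, we have
$$\sum_{i=1}^N\int_{S_i}\sigma\, v_g\le \int_M\sigma\, v_g=\vert M\vert_g,$$
so a pigeonhole argument selects at least $k+1$ indices $i$ with $\int_{S_i}\sigma\, v_g\le 2\vert M\vert_g/N$. For any such $\phi_i$, as $\nabla\phi_i$ vanishes outside $S_i$,
$$R_{(g,1,\sigma)}(\phi_i)\le\frac{4r^{-2}\cdot 2\vert M\vert_g/N}{c_n'r^n}=\frac{C_n\,\vert M\vert_g}{N\,r^{n+2}},$$
and the min-max principle applied to the $(k+1)$-dimensional span of these disjointly supported $\phi_i$ bounds $\mu_k^g(1,\sigma)$ above by this quantity. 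In the regime $(\vert M\vert_g/k)^{1/n}\le 1/\sqrt{\mbox{ric}_0}$ one has $r\sim(\vert M\vert_g/k)^{1/n}$ and $N\ge 2(k+1)$, giving a bound of order $(k/\vert M\vert_g)^{2/n}$; in the complementary regime $r\sim 1/\sqrt{\mbox{ric}_0}$ and the maximal packing gives $N\sim\vert M\vert_g\mbox{ric}_0^{n/2}$, so the bound collapses to $\sim\mbox{ric}_0$. Summing the two contributions yields \eqref{asmasigma}, and \eqref{asmasigmaKroger} follows by restricting to $k\ge\vert M\vert_g\mbox{ric}_0^{n/2}$.

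The main obstacle I anticipate is securing $N$ large enough in the curvature-dominated regime: the pigeonhole loss $\vert M\vert_g/N$ only matches $r^{-(n+2)}=\mbox{ric}_0^{(n+2)/2}$ when $N$ reaches the maximal capacity $\sim \vert M\vert_g\mbox{ric}_0^{n/2}$, rather than just $2(k+1)$. Handling this uniformly across the two regimes, while keeping the Vitali extraction compatible with the boundary effects and with the ambient metric $\tilde g$ rather than the induced $g=\tilde g\vert_M$, is the only genuinely technical point; everything else is routine once the pigeonhole idea is in place.
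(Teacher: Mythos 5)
Your overall scheme---disjointly supported plateau functions with gradient of size $1/r$ on a shell, followed by a pigeonhole selection of the supports carrying little $\sigma$-mass---is exactly the paper's strategy (the Colbois--Maerten method as packaged in Lemma 2.1 of \cite{ceg1}). But there is a genuine gap at the foundation of your packing step: you assert a Vitali extraction producing at least $\min\{2(k+1),\,c_n''\vert M\vert_g r^{-n}\}$ disjoint balls each satisfying $\vert B(x_i,r)\cap M\vert_g\ge c_n' r^n$. Under the sole hypothesis $Ric_{\tilde g}\ge -(n-1)\mbox{ric}_0\,\tilde g$, Bishop--Gromov gives only an \emph{upper} bound $\vert B(x,s)\vert\le C_n s^n$; there is no lower bound of the form $c_n's^n$, and it fails badly for collapsed manifolds (e.g.\ a flat torus $\mathbb{S}^1_\varepsilon\times\mathbb{S}^1$, where every ball of radius $r\gg\varepsilon$ has volume of order $\varepsilon r^{n-1}\ll r^n$). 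The failure is not a boundary issue, so centring the $x_i$ away from $\partial M$ and comparing with half-balls does not repair it. Without that lower bound the denominator of your Rayleigh quotient is uncontrolled, and no choice of $N$ in your dichotomy rescues the estimate.

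The paper's fix is precisely the point of Lemma 2.1 of \cite{ceg1}: from the ball-volume \emph{upper} bound and the covering property (both consequences of Bishop--Gromov) one extracts $2(k+1)$ measurable sets $A_1,\dots,A_{2(k+1)}$ --- not balls --- with $\vert A_i\vert_g\ge \frac{\vert M\vert_g}{4N_n(k+1)}$ and mutual distances at least $3r_k$, where $r_k^n\sim \vert M\vert_g/(k+1)$. The test functions are then supported in the disjoint neighbourhoods $A_j^{r_k}$, and your pigeonhole on $\sum_j\int_{A_j^{r_k}}\sigma\,v_g\le\int_M\sigma\,v_g$ goes through verbatim. Note also that the paper disposes of the curvature-dominated regime much more simply than your second packing scale: after rescaling to $\mbox{ric}_0=1$, the construction works for all $k\ge k_0$ with $k_0+1\sim\vert M\vert_g$, and for $k\le k_0$ one just uses monotonicity $\mu_k^g(1,\sigma)\le\mu_{k_0}^g(1,\sigma)\le B_n$, which supplies the additive constant $B_n\,\mbox{ric}_0$ in \eqref{asmasigma}.
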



\begin{proof} [Proof of Theorem \ref{bound-rho}]
 
We consider the metric measured space $(M,d_0,\nu)$ where $d_0$ is the restriction to $M$ of the Riemannian distance on $(\tilde M, \tilde g_0)$, and $\nu= \rho  v_{g}$. Since $Ric_{g_0}\ge -(n-1)g_0$, the  space $(M,d_0,\nu)$ satisfies a $(2,N;1)-$covering property for some fixed $N$ (see  \cite{asma1}). Therefore, we can apply Theorem 2.1 of \cite{asma1} and  find a family of $3(k+1)$ pairs of sets $(F_j,G_j)$ of $M$ with $F_j\subset G_j$, such that the $G_j$'s are mutually disjoint and $\nu(F_j)\ge \frac{\nu(M)}{c^2{(k+1)}}$, with $c=c(n)$ is a constant which depends only on $n$. Moreover, each pair $(F_j,G_j)$ satisfies one of the following properties: 

\smallskip
- $F_j$ is an annulus $A$ of the form $A=\{r<d_0(x,a)<R\}$, and $G_j=2A=\{\frac r2<d_0(x,a)<2R\}$, with outer radius $2R$  less than $1$,

\smallskip
- $F_j$ is an open set $V\subset M$ and $G_j=V^{r_0}=\{x\in M\ ; \ d_0(x,V)<r_0\}$, with $r_0= \frac{1}{1600}$.

\smallskip
Let us start with the case where $F_j$ is an annulus $A=A(a,r,R)=\{r<d_0(x,a)<R\}$ and $G_j=2A$.  To such an annulus we associate the function $u_A$ supported in $2A=\{\frac r2<d_0(x,a)<2R\}$ and such that 
\begin{equation}\label{testfunct}
u_A( x )=
  \left\lbrace
  \begin{array}{ll}
   \frac 2rd_0(x,a)-1 & \quad \text{if } \frac r2\le d_0(x,a)\le r\\
   1& \quad \text{if } x\in A\\
  2-\frac 1Rd_0(x,a) & \quad \text{if }  R\le d_0(x,a)\le 2R\\
  \end{array}
  \right.
 \end{equation}
 Since $u_A$ is supported in $2A$ we get, using Hölder's inequality and the conformal invariance of $\vert\nabla^{g} u_A\vert^n v_{g}$,  
$$\int_{M} \vert\nabla^{g}u_A\vert^2  v_{g} =\int_{2A } \vert\nabla^{g}u_A\vert^2  v_{g} \le \left(\int_{2A } \vert\nabla^{g} u_A\vert^n v_{g}\right)^{\frac 2n}{\left(\int_{2A} v_{g}\right)^{1-\frac2n}} $$
$$\qquad  \qquad \qquad \qquad \qquad \qquad \qquad = \left(\int_{2A} \vert\nabla^{g_0} u_A\vert^n v_{g_0}\right)^{\frac 2n} \vert 2A\vert_{g}^{1-\frac2n}.$$
Since
 \[
\vert\nabla^{g_0} u_A\vert \overset{a.e.}{=}
  \left\lbrace
  \begin{array}{ll}
   \frac 2 r & \quad \text{if } \frac r2\le d_0(x,a)\le r\\
   0& \quad \text{if } r\le d_0(x,a)\le R\\
  \frac 1R& \quad \text{if }  R\le d_0(x,a)\le 2R\\
  \end{array}
  \right.
 \]
 we get
 $$ \int_{2A} \vert\nabla^{g_0} u_A\vert^n v_{g_0} \le \left(\frac 2r \right)^n \vert B(a,r)\vert_{g_0} + \left(\frac 1R \right)^n  \vert B(a,2R)\vert_{g_0} \le 2^{n+1}\Gamma (g_0)$$
where 
$$\Gamma (g_0)=\sup_{x\in M, t\in(0,1)}\frac { \vert B(x,t)\vert_{g_0}}{t^n}$$
(here $B(x,t)$ stands for the ball of radius $t$ centered at $x$ in $(M,d_0)$). 
Notice that since $Ric_{\tilde g_0}\ge -(n-1)\tilde g_0$, the constant $\Gamma (g_0)$ is bounded above by a constant  that depends only on $n$
 (Bishop-Gromov inequality). Hence, 
$$ \int_{M} \vert\nabla^{g}u_A\vert^2  v_{g}\le C(n)  \vert2A \vert_{g}^{1-\frac2n} $$
where $C(n)\ge  2^{n+1}\Gamma (g_0)$.
 On the other hand, we have 
$$ \int_{M}  u_A^2\rho \, v_{g} \ge \int_{A }  \rho \, v_{g} =\nu(A)\ge  \frac{\nu(M)}{c^2{(k+1)}}.$$
Thus
$$R_{(g,\rho,1)}(u_A)=\frac{\int_{M} \vert\nabla^{g}u_A\vert^2  v_{g}}{\int_{M}  u_A^2\rho \, v_{g}}\le A_n \frac{  \vert 2A \vert_{g}^{1-\frac2n} }{ \nu(M) } (k+1)$$
for some constant $A_n$. 

Now, in the second situation, where $F_j$ is an open set $V$ and $G_j=V^{r_0}$, we introduce  the function $u_V$ defined to be equal to $1$ inside $V$, $0$ outside $V^{r_0}$ and proportional to the $d_0$-distance to the outer boundary in $V^{r_0}\setminus V$. We have, since $ u_V=1$ in $V$ and $ \vert \nabla^{g_0} u_V\vert$ is equal to $\frac1{r_0}$ almost everywhere in $V^{r_0}\setminus V$ and vanishes in $V$ and in $M\setminus V^{r_0}$,
$$\int_{M}  u_V^2\rho \, v_{g}\ge \int_{V}  \rho \, v_{g} =\nu(V)\ge \frac{\nu(M)}{c^2{(k+1)}}$$
and
$$
{\int_{M} \vert \nabla^{g} u_V\vert^2v_{g}}\le
{\left(\int_{V^{r_0}} \vert \nabla^{g} u_V\vert^nv_{g}\right)^{\frac 2n}\vert V^{r_0}\vert_{g}^{1-\frac 2n}}=  {\left(\int_{V^{r_0}} \vert \nabla^{g_0} u_V\vert^nv_{g_0}\right)^{\frac 2n}\vert V^{r_0}\vert_{g}^{1-\frac 2n}}$$
$$\le 
\frac{ \vert V^{r_0} \vert_{g_0}^{\frac 2n}\vert V^{r_0} \vert_{g}^{1-\frac 2n}}{{r_0}^2 }
$$
Thus
$$
R_{(g,\rho,1)}(u_V) \le 
B_n\frac{ { \vert V^{r_0} \vert_{g_0}^{\frac 2n}} \vert V^{r_0}  \vert_{g}^{1-\frac 2n}}{\nu(M)} (k+1)$$
where $B_n= \frac{c^2}{r_0^2}$ is a constant which depends only on $n$. 

\smallskip
In conclusion, to each pair $(F_j,G_j)$ we associate a test function $u_j$ supported in $G_j$ and  satisfying 
either $R_{(g,\rho,1)}(u_j)  \le A_n \frac{  \vert G_j \vert_{g}^{1-\frac2n} }{ \nu(M) } (k+1)$ or $R_{(g,\rho,1)}(u_j) \le B_n\frac{ { \vert G_j \vert_{g_0}^{\frac 2n}} \vert G_j \vert_{g}^{1-\frac 2n}}{\nu(M)} (k+1)$, that is
$$R_{(g,\rho,1)}(u_j)  \le A_n \frac{  \vert G_j \vert_{g}^{1-\frac2n} }{ \nu(M) } (k+1) + B_n\frac{ { \vert G_j \vert_{g_0}^{\frac 2n}} \vert G_j \vert_{g}^{1-\frac 2n}}{\nu(M)} (k+1).$$
Now,  observe that since  $\sum_{j\le 3(k+1)}\vert G_j  \vert_{g_0}\le \vert M\vert_{g_0}$ and $\sum_{j\le 3(k+1)}\vert G_j\vert_{g}\le \vert M\vert_{g}$, there exist at least $k+1$ sets among   $G_1,\dots, G_{3(k+1)}$ satisfying both  $\vert G_j\vert_{g_0}\le \frac{\vert M\vert_{g_0}}{k+1}$ and $\vert G_j\vert_{g}\le \frac{\vert M\vert_{g}}{k+1}$. 
This leads to a subspace of $k+1$ disjointly supported functions $u_j$  whose Rayleigh quotients are such that 
$$R_{(g,\rho,1)}(u_j)  \le A_n \frac{  \vert G_j \vert_{g}^{1-\frac2n} }{ \nu(M) } (k+1) + B_n\frac{ { \vert G_j \vert_{g_0}^{\frac 2n}} \vert G_j \vert_{g}^{1-\frac 2n}}{\nu(M)} (k+1)$$
$$ \qquad \quad \le A_n \frac{ {\vert M\vert_{g}}^{1-\frac2n} }{ \nu(M) } (k+1)^{\frac2n} + B_n \frac{ \vert M\vert_{g_0}^{\frac 2n}}{ \nu(M) }\vert M\vert_{g}^{1-\frac 2n} $$
with $\nu(M)=\int_M \rho v_{g}=\vert M\vert_g$. The desired inequality then immediately follows thanks to \eqref{eq-1}.  
\end{proof}

\begin{proof} [Proof of Theorem \ref{bound-sigma}]
 
 First, observe that it suffices to prove the theorem when $ \mbox{ric}_0 =1$ (i.e. $Ric_{\tilde g}\ge -(n-1)\tilde g$). Indeed,   the Riemannian metric $\tilde g_0= {\mbox{ric}_0} \tilde g$ satisfies $Ric_{\tilde g_0}\ge -(n-1)\tilde g_0$ and 
$\vert M\vert_{g_0}= (\mbox{ric}_0)^{n/2} \vert M\vert_g$, with $g_0=\tilde g_0\vert M$. Hence, the inequality
$$
\mu_k^{g_0}(1,\sigma)  \le A_n \left(\frac k{\vert M \vert_{g_0}}\right)^\frac 2n +B_n 
$$
implies
$$
\mu_k^g(1,\sigma) = \mbox{ric}_0 \mu_k^{g_0}(1,\sigma)\le \mbox{ric}_0\left( A_n  \left(\frac k{\vert M\vert_{g_0}}\right)^\frac 2n+B_n \right)= A_n \left(\frac k{\vert M\vert_{g}}\right)^\frac 2n + B_n \mbox{ric}_0  .
$$
Therefore, assume that  $ \mbox{ric}_0 =1$ and consider the metric measured space $(M,d,v_{g})$ where $d$ is the restriction to $M$ of the Riemannian distance of $(\tilde M, \tilde g)$. 
The proof relies on the method developed by Colbois and Maerten \cite{cm} as presented in Lemma 2.1 of \cite{ceg1}. 
Applying Bishop-Gromov Theorem, we deduce that there exist two constants, $C_n$ and $N_n$, depending only on $n$, such that, $\forall x\in M$ and $\forall r\le 1$,  
\begin{itemize}
 \item 
  $\vert B(x,r)\vert_{g}\le C_n r^n$
\item  
  $B(x,4r)$ can be covered by $N_n$ balls of radius $r$
 
\end{itemize}
where $B(x,r)$ stands for the ball in $M$ of radius $r$ with respect to the distance $d$. 

Let $k_0$ be the smallest integer such that  $2(k_0+1)> \frac{ \vert M \vert_{g}}{4 C_n N_n^2}$. For every $k\ge k_0$ we  define  $r_k$  by 
$$ r_k^n = \frac{\vert M \vert_{g}}{8C_nN_n^2 (k+1)}\le  1$$
which means that, $\forall x\in M$,   $$
  \vert B(x,r_k) \vert_{g} \le C_n r_k^n\le \frac{\vert M\vert_{g}}{8N_n^2 (k+1)}.
  $$
Thus, we can apply Lemma 2.1 of \cite{ceg1} and deduce the existence of 
$2(k+1)$ measurable subsets $A_1,\dots,A_{2(k+1)}$ of $M$
  such that, $\forall i\le 2(k+1)$, $\vert A_i \vert_{g}\ge \frac{\vert M\vert_{g}}{4N_n(k+1)}$
  and, for
  $i\not =j$, $d(A_i,A_j) \ge 3r_k$.
 To each set $A_j$ we associate the function $f_j$ supported in $A_j^{r_k}=\{x\in M\, :\, d(x,A_j)<r_k\}$  and defined to be equal to $1$ inside $A_j$  and proportional to the distance to the outer boundary in $A_j^{r_k}\setminus A_j$. The length of the gradient $ \vert \nabla^{g} f_j\vert$ is then equal to $\frac1{r_k}$ almost everywhere in $A_j^{r_k}\setminus A_j$ and vanishes elsewhere, so that we get
$$
R_{(g,1,\sigma)}(f_j) = \frac{\int_{A_j^{r_k}} \vert \nabla^{g} f_j\vert^2\sigma v_{g}}{\int_{A_j^{r_k}}  f_j^2 v_{g}}\le\frac{\frac 1 {{r_k}^2 } \int_{A_j^{r_k}} \sigma v_{g}}{ \vert A_j\vert_{g}}\le \frac {4N_n} {{r_k}^2 }  \frac{ \int_{A_j^{r_k}} \sigma v_{g} }{ \vert M\vert_{g}} (k+1)
$$
which gives, after replacing $r_k$ by its explicit value, 
$$
R_{(g,1,\sigma)}(f_j) \le A_n \frac{ \int_{A_j^{r_k}} \sigma v_{g} }{ \vert M \vert_{g}^{1+\frac2n}} (k+1)^{1+\frac2n}.
$$
for some constant $A_n$.
Now, since  $\sum_{j\le 2(k+1)} \int_{A_j^{r_k}} \sigma v_{g}\le  \int_{M} \sigma v_{g}$, there exist at least $k+1$ sets among the $A_j$'s such that  $ \int_{A_j^{r_k}} \sigma v_{g}\le \frac{\int_{M} \sigma v_{g}}{k+1}$. 
This leads to a subspace of $k+1$-disjointly supported functions $f_j$  whose Rayleigh quotients are such that 
$$
R_{(g,1,\sigma)}(f_j) \le A_n \frac{ \int_{M} \sigma v_{g} }{ \vert M\vert_{g}^{1+\frac2n}} (k+1)^{\frac2n}.
$$
Consequently, we have thanks to \eqref{eq-1}, for all $k\ge k_0$, 
$$
\mu_k^g(1,\sigma) \le A_n \frac{ \int_{M} \sigma v_{g} }{ \vert M\vert_{g}^{1+\frac2n}} (k+1)^{\frac2n} = A_n\left( \frac{  k+1 }{ \vert M\vert_{g}}\right)^{\frac2n}  
$$
since we have assumed that $ \int_{M} \sigma v_{g} =\vert M\vert_{g}$.
On the other hand, for every $k\le k_0$, one obviously has (since $k_0+1\le \frac{ \vert M\vert_{g}}{4 C_n N_n^2}$)
$$\mu_k^g(1,\sigma) \le \mu_{k_0}^g(1,\sigma) \le  A_n\left( \frac{  k_0+1 }{ \vert M\vert_{g}}\right)^{\frac2n}  
\le A_n \left(\frac{ 1}{4 C_n N_n^2}\right)^{\frac2n}.$$ 
Denoting by $B_n$ the latter constant we obtain, for every $k\ge 0$,
$$\mu_k^g(1,\sigma)\le A_n\left( \frac{  k}{ \vert M\vert_{g}}\right)^{\frac2n}+ B_n  .$$
\end{proof}

\section{Extremal eigenvalues } \label{extremal}

Let $(M,g)$ be a compact Riemannian manifold of dimension $n\ge2$, possibly with boundary. In \cite{CE},  we introduced the following conformally invariant quantities that we named ``conformal eigenvalues": For every $k\in\N$, $\lambda_k^c(M,[g])$ is defined as the supremum of $\lambda_k(M,g')$ when $g'$ runs over all metrics of unit volume which are  conformal to $g$ (or, equivalently, $\lambda_k^c(M,[g]) =\sup \lambda_k(M,g')\vert M\vert_{g'}^{\frac 2n}$ when $g'$ runs over all metrics conformal to $g$). Thus, we can write 
$$\lambda_k^c(M,[g])=\sup_{\int_M\rho \, v_g=1 }\lambda_k(M,{\rho^{\frac{2}n} g})=\sup_{\int_M\rho \, v_g=1}\mu_k^g(\rho,\rho^{\frac{n-2}n}).$$
We investigated in  \cite{CE} some of the properties of the conformal eigenvalues such as 
the existence of a universal lower bound, and proved that 
\begin{equation}\label{confspec1}
\lambda_k^c(M,[g])\ge\lambda_k^c(\Sp^n,[g_s])\ge n\alpha_n^{\frac2n}k^{\frac2n}
\end{equation}
where $\alpha_n=(n+1)\omega_{n+1}$ is the volume of the standard sphere. Moreover, we proved that the gap between two consecutive conformal eigenvalues satisfies the following  estimate:
\begin{equation}\label{confspec2}
\lambda_{k+1}^c(M,[g])^{\frac n2}-\lambda_k^c(M,[g])^{\frac n2}\ge n^{\frac n2}\alpha_n.
\end{equation}
Actually, these properties were established in the context of closed manifolds. However, they remain valid in the context of  bounded domains, under Neumann boundary conditions,  without the need to change anything to the proofs.
In this regard, we can point out the following  curious phenomenon that all bounded Euclidean domains have the same conformal spectrum.

\begin{prop}\label{conformaleucdomain} For every  bounded domain $\Omega\subset \R^n$ with $C^1$-boundary  one has
$$\lambda_k^c(\Omega,[g_E])=\lambda_k^c(B^n,[g_{E}])$$
where $g_E$ is the Euclidean metric.

\end{prop}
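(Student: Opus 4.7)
The plan is to prove the identity by establishing two properties of $\lambda_k^c$ on bounded Euclidean domains: scale invariance and monotonicity with respect to domain inclusion. These two properties together with a simple nesting argument force $\lambda_k^c(\Omega,[g_E])$ to be independent of $\Omega$, hence equal to $\lambda_k^c(B^n,[g_E])$.

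\textbf{Step 1 (scale invariance).} For any $r>0$ the dilation $x\mapsto rx$ sends any metric $\phi^2 g_E$ on $\Omega$ to the metric $(\phi\circ r^{-1}\cdot\mathrm{Id})^2 g_E$ on $r\Omega$, and is an isometry between $(\Omega,\phi^2 g_E)$ and $(r\Omega,r^2\phi^2 g_E)$. Since the quantity $\lambda_k(\cdot,g')\vert\cdot\vert_{g'}^{2/n}$ is simultaneously isometry- and scaling-invariant, taking the supremum over conformal representatives gives $\lambda_k^c(r\Omega,[g_E])=\lambda_k^c(\Omega,[g_E])$.

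\textbf{Step 2 (monotonicity).} I claim that if $\Omega_0,\Omega_1$ are bounded $C^1$ domains in $\R^n$ with $\overline{\Omega_0}\subset \Omega_1$, then $\lambda_k^c(\Omega_0,[g_E])\le \lambda_k^c(\Omega_1,[g_E])$. Fix a smooth positive density $\rho$ on $\Omega_0$ with $\int_{\Omega_0}\rho\,dx=1$, so that the conformal metric $g'=\rho^{2/n}g_E$ on $\Omega_0$ satisfies $|\Omega_0|_{g'}=1$. Extend $\rho$ to $\Omega_1$ by setting $\rho_\varepsilon=\rho$ on $\Omega_0$ and $\rho_\varepsilon=\varepsilon$ on $\Omega_1\setminus\overline{\Omega_0}$ (smoothing near $\partial\Omega_0$ if needed via Proposition \ref{smoothing}). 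The volumes converge as $\varepsilon\to 0$:
$$|\Omega_1|_{\rho_\varepsilon^{2/n}g_E}=\int_{\Omega_1}\rho_\varepsilon\,dx=1+\varepsilon|\Omega_1\setminus\Omega_0|\longrightarrow 1.$$
For the eigenvalues I split cases. If $n\ge 3$, the conformal metric $\rho_\varepsilon^{2/n}g_E$ coincides with $\rho^{2/n}g_E$ on $\Omega_0$ and equals $\varepsilon^{2/n}g_E$ outside, which fits the framework of Theorem \ref{casesurfaces}(1) (after rewriting $\varepsilon^{2/n}g_E=\tilde\varepsilon\cdot \tilde g$ with $\tilde\varepsilon\to 0$). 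Hence $\lambda_k(\Omega_1,\rho_\varepsilon^{2/n}g_E)\to\lambda_k(\Omega_0,\rho^{2/n}g_E)$, and combining with the volume limit we get $\lambda_k^c(\Omega_1,[g_E])\ge \lambda_k(\Omega_0,\rho^{2/n}g_E)|\Omega_0|_{\rho^{2/n}g_E}^{2/n}$. If $n=2$, the identity $\mu_k^{g_E}(\rho_\varepsilon,1)=\lambda_k(\Omega_1,\rho_\varepsilon g_E)$ combined with Corollary \ref{zero_rho} gives $\lambda_k(\Omega_1,\rho_\varepsilon g_E)\to \gamma_k(\Omega_0,\rho)$; the extra harmonic-extension term in $Q_0$ is nonnegative, hence $\gamma_k(\Omega_0,\rho)\ge \lambda_k(\Omega_0,\rho g_E)$, and the same lower bound for $\lambda_k^c(\Omega_1,[g_E])$ follows. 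In either dimension, taking the supremum over $\rho$ yields $\lambda_k^c(\Omega_0,[g_E])\le \lambda_k^c(\Omega_1,[g_E])$.

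\textbf{Step 3 (conclusion).} Given any bounded $C^1$ domain $\Omega$, choose a ball $B(x_0,\delta)\subset\Omega$ and $R>0$ with $B^n\subset B(0,R)$. Translating and scaling by $r<\delta/R$, we obtain a copy $r B^n+x_0$ of $B^n$ with closure inside $\Omega$. By Steps 1 and 2, $\lambda_k^c(B^n,[g_E])=\lambda_k^c(rB^n+x_0,[g_E])\le \lambda_k^c(\Omega,[g_E])$. The reverse inequality follows by the symmetric construction: a suitably scaled and translated copy of $\Omega$ has closure inside $B^n$. Combining gives the desired equality. The main technical point is the convergence statement in Step 2; in dimension $n\ge 3$ it is Theorem \ref{casesurfaces}(1) applied to the conformal factor, while in dimension 2 the extension-energy term in $\gamma_k$ requires the explicit one-sided comparison $\gamma_k\ge \lambda_k$ to recover monotonicity.
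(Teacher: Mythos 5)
Your proposal is correct and follows essentially the same route as the paper: monotonicity of $\lambda_k^c$ under inclusion via a conformal factor degenerating to $\varepsilon$ outside the smaller domain (Theorem \ref{casesurfaces}(1) for $n\ge 3$, and the comparison $\gamma_k(\Omega_0,\rho)\ge\lambda_k(\Omega_0,\rho g_E)$ coming from the nonnegativity of the harmonic-extension term for $n=2$), combined with scale invariance and a nesting of balls. The only cosmetic difference is that the paper sandwiches $\Omega$ between two concentric balls rather than nesting scaled copies in both directions; the two arguments are interchangeable.
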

For $k=1$ we have $\lambda_1^c(\Omega,[g_E])=n\alpha_n^{\frac2n}$ (see Corollary \ref{mu1sup} below). 
\begin{proof}
Let us first observe that  if $\Omega$ is a proper subset of $\Omega'$, then $\lambda_k^c(\Omega,[g_E])\le\lambda_k^c(\Omega',[g_E])$. Indeed, 
given a metric $g=fg_E$  conformal to $g_E$ on $\Omega$, we extend it to $\Omega'$ in  a metric  $g'$ conformal to $g_E$.  For every $\varepsilon >0$, we multiply $g'$ by the function $f_\varepsilon$ which is equal to $1$ on $\Omega$ and equal to $\varepsilon$ on $\Omega'\setminus\Omega$ and apply Theorem \ref{casesurfaces}.   In dimension $n\ge3$, this theorem tells us that  $\lambda_k(\Omega',f_\varepsilon g')$ converges to $\lambda_k(\Omega,g)$. Since the volume of $(\Omega',f_\varepsilon g')$ converges to the volume of $(\Omega,g)$, we deduce that $\lambda_k(\Omega,g)\vert \Omega\vert_g^{2/n}\le\lambda_k^c(\Omega',[g_E])$.
 In dimension 2, we obtain that $\lambda_k(\Omega',f_\varepsilon g')$ converges to the $k$-th eigenvalue of the quadratic form $\int_\Omega\vert\nabla u\vert^2v_g+\int_{\Omega'\setminus\Omega}\vert\nabla H(u)\vert^2v_g$. This quadratic form is clearly larger than the Dirichlet energy $\int_\Omega\vert\nabla u\vert^2v_g$ on $\Omega$ so that its $k$-th eigenvalue is bounded below by $\lambda_k(\Omega,g)$.   Again, this implies that $\lambda_k(\Omega,g)\le\lambda_k^c(\Omega',[g_E]). $

\smallskip
Now,
since $\Omega$ is open and bounded, there exist two positive radii $r_1$ and $r_2$ so that 
$$B^n(r_1)\subset \Omega\subset B^n(r_2)$$
where $B^n(r_1)$ and $B^n(r_2)$ are two concentric Euclidean balls.  Using the observation above we get 
$$\lambda_{k}^c(B^n(r_1),[g_E])  \le\lambda_{k}^c(\Omega,[g_E]) \le\lambda_{k}^c(B^n(r_2),[g_E]) . $$
 Since the balls $B^n(r_1)$ and $ B^n(r_2)$ are homothetic to the unit ball $B^n$, one necessarily has $\lambda_{k}^c(B^n(r_1),[g_E])= \lambda_{k}^c(B^n(r_2),[g_E])=\lambda_{k}^c(B^n,[g_E]) $ which enables us to conclude.
\end{proof}

As a consequence of the upper bounds given in the previous section, it is natural to introduce the following extremal eigenvalues:

$$\mu_k^*(M,g)=\sup_{\fint_M\rho \, v_g=1}\mu_k^g(\rho,1) = \sup_{\rho }\mu_k^g(\rho,1)\fint_M\rho \, v_g$$
$$\mu_k^{**}(M,g)=\sup_{\fint_M\sigma v_g=1}\mu_k^g(1,\sigma)=\sup_{\sigma }\frac {\mu_k^g(1,\sigma)}{\fint_M\sigma v_g}$$

A natural question is whether properties such as \eqref{confspec1} and \eqref{confspec2} may occur for $\mu_k^*(M,g)$ and $\mu_k^{**}(M,g)$. 
 Observe that these quantities are not invariant under metric scaling since 
 $$\mu_k^*(M,r^2g)=r^{-2}\mu_k^*(M,g)\quad  and \quad \mu_k^{**}(M,r^2g)=r^{-2}\mu_k^{**}(M,g).$$ 
Hence, we will assume  that the volume of the manifold is fixed.


\smallskip
In the particular case of manifolds   $(M,g)$ of dimension 2  one has for every  $\rho$, $\mu_k^g(\rho, 1)=\lambda_k(M,\rho g)$. Thus, 
 \begin{equation}\label{conform_rho}
\mu_k^*(M,g) = \frac{\lambda_k^c(M,g)}{\vert M\vert_g}
 \end{equation}
and we deduce from \eqref{confspec1} and \eqref{confspec2} that any 2-dimensional Riemannian manifold $(M,g)$  satisfies
$$\mu_k^*(M,g) \ge \frac {8\pi k}{\vert M\vert_g}$$
and
$$\mu_{k+1}^*(M,g)-\mu_k^*(M,g) \ge   \frac {8\pi }{\vert M\vert_g}.  $$

\smallskip
The following theorem shows that the 2-dimensional case is in fact exceptional. Indeed, it turns out that any compact manifold of dimension $n\ge 3$ can be deformed in such a way that $\mu_k^*(M,g)$ becomes as small as desired.

\begin{thm}\label{inf_sup_rho} Let $M$ be a compact manifold of dimension $n\ge 3$.  There exists on $M$ a one-parameter family of metrics $g_\varepsilon$, $\varepsilon>0$, of volume 1  such that 
$$\mu_k^*(M,g_\varepsilon)\le C k\, \varepsilon^{\frac {n-2}n},$$
 where C is a constant which does not depend on $\varepsilon$ or $k$.

\end{thm}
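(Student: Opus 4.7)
The plan is to produce $g_\varepsilon$ by a direct and explicit construction, namely a highly anisotropic metric in a local coordinate chart, together with a Rayleigh quotient argument carried out separately for each density $\rho$. Fix a coordinate chart identifying an open set $U\subset M$ with the Euclidean cube $[0,1]^n$. On $U$ we define the model metric
\[
  g_\varepsilon|_U \;=\; \varepsilon^{-2(n-1)/n}\, dx_1^2 \;+\; \varepsilon^{2/n}\,(dx_2^2+\cdots+dx_n^2).
\]
Its determinant equals $1$, so the Riemannian volume element is $dx_1\cdots dx_n$ and $|U|_{g_\varepsilon}=1$. The $x_1$-direction is stretched to length $\varepsilon^{-(n-1)/n}$ while each transverse direction is compressed to length $\varepsilon^{1/n}$; in particular a slab $[0,\ell]\times[0,1]^{n-1}$ of coordinate width $\ell\sim\varepsilon$ becomes, in $g_\varepsilon$, a near-Euclidean cube of side $\varepsilon^{1/n}$ and Riemannian volume $\varepsilon$. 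Outside the chart we set $g_\varepsilon$ equal to a very small multiple of a fixed background metric so that $|M|_{g_\varepsilon}=1$; Proposition~\ref{zerorho} and Corollary~\ref{zero_rho} ensure that making this exterior multiple small makes the contribution of $M\setminus U$ to the spectrum negligible, so for the estimate we may work as if $g_\varepsilon$ were supported on the chart alone.

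Given any density $\rho$ with $\fint_M \rho\, v_{g_\varepsilon}=1$, we first partition $[0,1]$ along the stretched direction into $k+1$ sub-intervals $I_0,\ldots,I_k$ carrying equal $\rho$-mass $1/(k+1)$: the partition points are the values $F^{-1}(j/(k+1))$, where $F(t)=\int_{[0,t]\times[0,1]^{n-1}}\rho\, v_{g_\varepsilon}$. On each slab $S_j=I_j\times[0,1]^{n-1}$ we then produce a test function $u_j$ supported in a slight enlargement of $S_j$ and orthogonal to the constants with respect to $\rho\, v_{g_\varepsilon}$. When the coordinate width $\Delta_j=|I_j|$ exceeds the critical value $\varepsilon$, we take $u_j$ a suitably rescaled cosine in the $x_1$-variable; when $\Delta_j\le\varepsilon$, we instead use a Neumann eigenfunction on the near-cubic slab $S_j$, whose first $k$ Neumann eigenvalues on a Riemannian cube of side of order $\varepsilon^{1/n}$ are controlled by $C k^{2/n}\varepsilon^{-2/n}$. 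A direct computation in either regime yields
\[
  R_{(g_\varepsilon,\rho,1)}(u_j) \;\le\; C\, k\, \varepsilon^{(n-2)/n},
\]
with $C=C(n)$ independent of $\varepsilon$, $k$, $\rho$, and the slab. The functions $u_0,\ldots,u_k$ are disjointly supported (up to negligible transition regions), so they span a $(k+1)$-dimensional subspace of $H^1(M)$; the variational characterization~\eqref{eq-1} then gives $\mu_k^{g_\varepsilon}(\rho,1)\le \max_j R_{(g_\varepsilon,\rho,1)}(u_j)\le C k\,\varepsilon^{(n-2)/n}$, and taking the supremum over $\rho$ yields the announced estimate.

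The main obstacle is this uniform Rayleigh-quotient bound at the crossover width $\Delta_j\sim\varepsilon$. In this intermediate regime neither a purely $1$-dimensional cosine in $x_1$ (whose gradient blows up as $\Delta_j\to 0$) nor a purely transverse oscillation in the cross-section (whose eigenvalue is of order $\varepsilon^{-2/n}$) is by itself efficient, and one has to exploit the near-cubic Riemannian shape of the slab together with the prescribed $\rho$-mass $1/(k+1)$ to control numerator and denominator simultaneously. The transfer from the model chart to a general compact $M$, as well as the handling of the small exterior region, is a secondary point resolved by the collapsing and density-approximation results of Section~\ref{technicalfact}.
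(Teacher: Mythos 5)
Your choice of geometry is essentially the paper's: after the final unit-volume rescaling, the paper's manifold (a fixed piece glued to a cylinder $[0,\frac1\varepsilon]\times\Sp^{n-1}$ capped by a hemisphere) is exactly a ``cigar'' of length $\sim\varepsilon^{-(n-1)/n}$ and cross-section $\sim\varepsilon^{1/n}$, i.e.\ the same shape as your anisotropic box. The difficulty lies entirely in the second half of the argument, and there your proposal has a genuine gap --- one you yourself flag as ``the main obstacle'' without closing it. Two concrete problems. First, the denominator: cutting into $k+1$ slabs of equal $\rho$-mass and putting a cosine in $x_1$ on each does not give $\int u_j^2\rho\,v_{g_\varepsilon}\gtrsim 1/(k+1)$, since $\rho$ may concentrate inside $S_j$ precisely where the cosine is near zero; the test function must equal $1$ on the whole mass-carrying set and decay only on a disjoint enlargement. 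Second, and fatally for the crossover regime: the $k$-th Neumann eigenvalue of a Riemannian cube of side $\varepsilon^{1/n}$ is of order $k^{2/n}\varepsilon^{-2/n}$, which \emph{blows up} as $\varepsilon\to0$, whereas the target $Ck\,\varepsilon^{(n-2)/n}$ tends to zero (and these are eigenvalues for $v_{g_\varepsilon}$, not for $\rho\,v_{g_\varepsilon}$). The correct test function for a thin slab is the plateau equal to $1$ on $S_j$ and decaying over Riemannian width $\sim\varepsilon^{1/n}$ (the cross-sectional scale) on either side, whose energy is $\le \varepsilon^{-2/n}\cdot\varepsilon^{1/n}\cdot\varepsilon^{(n-1)/n}=\varepsilon^{(n-2)/n}$; but then consecutive thin slabs have overlapping enlargements, so disjointness fails exactly when $\rho$ concentrates, and your decomposition must be replaced by a genuinely multiscale selection. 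You also cannot dismiss $M\setminus U$: Proposition~\ref{zerorho} is a convergence statement for a \emph{fixed} density and gives nothing uniform over all $\rho$, which may put a definite fraction of mass outside the chart.

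The paper closes precisely these gaps by applying the Grigor'yan--Netrusov--Yau decomposition (\cite[Theorem~3.5]{GNY}) to the metric-measure space $(M,d_\varepsilon,\rho\,v_{g_\varepsilon})$: for \emph{every} $\rho$ it produces $k+1$ annuli $A_j$ of mass $\ge \vert M\vert_{g_\varepsilon}/(Ck)$ whose doublings $2A_j$ are disjoint, with $C$ uniform in $\varepsilon$ because the packing constant is (Lemma~\ref{packing}). The role of the long thin geometry is isolated in Lemma~\ref{volume}: volume growth is $\le C_1r^n$ for small $r$ and \emph{linear}, $\le C_2r$, for large $r$, so $r^{-2}\vert B(x,r)\vert_{g_\varepsilon}$ is bounded independently of $r$ and $\varepsilon$ and every annulus test function, at every scale, has Dirichlet energy $\le C'$. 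This gives $\mu_k^{g_\varepsilon}(\rho,1)\le C''k/\vert M\vert_{g_\varepsilon}$ uniformly in $\rho$, and the exponent $\varepsilon^{(n-2)/n}$ only appears at the very end, from rescaling to volume $1$ using $\vert M\vert_{g_\varepsilon}\gtrsim 1/\varepsilon$. If you wish to keep your slab picture you must import an equivalent two-scale selection mechanism guaranteeing plateaus of definite mass with disjoint enlargements of Riemannian width $\varepsilon^{1/n}$; as written, the key Rayleigh-quotient bound is asserted rather than proved.
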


Similarly, we have the following result for the supremum with respect to $\sigma$.

\begin{thm}\label{inf_sup_sigma} Let $M$ be a compact manifold of dimension $n\ge 2$.  There exists on $M$ a one-parameter family of metrics $g_\varepsilon$, $\varepsilon>0$, of volume 1  such that 
$$\mu_k^{**}(M,g_\varepsilon)\le C k^2 \varepsilon^{2\frac {n-1}n}$$
where $C$ is a constant which depends only on $n$.

\end{thm}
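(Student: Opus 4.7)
The plan is to build $g_\varepsilon$ so that $M$ contains $N=2(k+1)$ disjoint ``pendants,'' each consisting of a bulb $B_i$ attached to the main body of $M$ by a long thin cylindrical neck $N_i$. Concretely, pick $N$ disjoint small balls in a fixed reference $(M,g_0)$ and perform a connected-sum type surgery inside each, gluing in a prefabricated piece whose $g_\varepsilon$-geometry is: a bulb of volume $v\sim 1/(k+1)$, and a neck of length $L=c_1/(k\,\varepsilon^{(n-1)/n})$ and cross-section radius $r=c_2\,\varepsilon^{1/n}$. With these choices the bulbs contribute a total $g_\varepsilon$-volume of $O(1)$ and the necks contribute $N r^{n-1}L=O(1)$, so after rescaling by a bounded factor we may enforce $|M|_{g_\varepsilon}=1$.

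On each pendant introduce a Lipschitz test function $u_i$ supported in $B_i\cup N_i$, equal to $1$ on $B_i$, equal to $1-t/L$ on the neck (where $t\in[0,L]$ is the longitudinal coordinate measured from the bulb-end of $N_i$), and extended by $0$ on the rest of $M$. Then the $u_i$ are pairwise disjointly supported, $\nabla u_i$ is supported only in $N_i$ with $\|\nabla u_i\|_\infty=1/L$, and $\int_M u_i^2\,v_{g_\varepsilon}\ge v$.

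Now fix any positive density $\sigma$ on $M$ with $\int_M\sigma\,v_{g_\varepsilon}=|M|_{g_\varepsilon}=1$. Since the necks $N_i$ are pairwise disjoint,
\[
\sum_{i=1}^{N}\int_{N_i}\sigma\,v_{g_\varepsilon}\ \le\ \int_M\sigma\,v_{g_\varepsilon}\ =\ 1,
\]
so by pigeonhole the $(k+1)$ smallest of these $N=2(k+1)$ integrals are each bounded by $1/(k+2)$; call the corresponding index set $I$. For every $i\in I$,
\[
R_{(g_\varepsilon,1,\sigma)}(u_i)\ \le\ \frac{\|\nabla u_i\|_\infty^{2}\int_{N_i}\sigma\,v_{g_\varepsilon}}{\int_M u_i^2\,v_{g_\varepsilon}}\ \le\ \frac{1}{L^2\,(k+2)\,v}\ \le\ C\,k^2\,\varepsilon^{2(n-1)/n},
\]
where $C$ depends only on $n$. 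Since $\{u_i\}_{i\in I}$ spans a $(k+1)$-dimensional subspace of $H^1(M)$, the min-max characterization \eqref{eq-1} yields $\mu_k^{g_\varepsilon}(1,\sigma)\le C\,k^2\,\varepsilon^{2(n-1)/n}$. The bound is uniform in $\sigma$, so taking the supremum gives $\mu_k^{**}(M,g_\varepsilon)\le C\,k^2\,\varepsilon^{2(n-1)/n}$.

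The main technical obstacle is verifying that the surgery can be performed smoothly on an arbitrary compact $M$ with exactly the prescribed bulb and neck sizes, and in such a way that the test function $u_i$ actually satisfies the asserted bound $\|\nabla u_i\|_\infty=1/L$ (so in particular that the bulb-to-neck and neck-to-bulk junctions contribute no larger gradient after smoothing). Once the geometric piece is in hand, the pigeonhole-plus-min-max argument is self-contained. It is worth emphasizing the contrast with Theorem \ref{inf_sup_rho}: for $\mu_k^{*}$ the adversarial density $\rho$ only appears in the denominator of the Rayleigh quotient, so a cruder construction yields the weaker bound $Ck\,\varepsilon^{(n-2)/n}$; for $\mu_k^{**}$ the density $\sigma$ multiplies $|\nabla u|^2$, and the pigeonhole step on the necks is precisely what permits controlling the supremum over $\sigma$.
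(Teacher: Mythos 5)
Your analytic mechanism is exactly the right one, and it is the same as the paper's: produce $2(k+1)$ disjoint regions that carry all of the gradient of $2(k+1)$ disjointly supported test functions, observe by pigeonhole that at least $k+1$ of these regions have $\sigma$-mass at most $\frac{1}{k+2}\int_M\sigma\,v_{g_\varepsilon}$, and feed the resulting $k+1$ Rayleigh quotients into \eqref{eq-1}. The numerics also check out: with $L=c_1/(k\,\varepsilon^{(n-1)/n})$, $r=c_2\varepsilon^{1/n}$ and bulb volume $v\sim 1/(k+1)$, the neck volumes total $O(1)$, the rescaling to unit volume is by a bounded factor, and $\frac{1}{L^2(k+2)v}\le Ck^2\varepsilon^{2(n-1)/n}$ as claimed.

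The genuine gap is that your metric is not a one-parameter family: the number of pendants $N=2(k+1)$, the neck length $L$, and the bulb volume $v$ all depend on $k$, so you construct $g_{\varepsilon,k}$ and prove ``for every $k$ there exists a family $g_{\varepsilon,k}$ with $\mu_k^{**}(M,g_{\varepsilon,k})\le Ck^2\varepsilon^{2(n-1)/n}$.'' The theorem (compare the explicit ``does not depend on $\varepsilon$ or $k$'' in Theorem \ref{inf_sup_rho}) asserts the stronger quantifier order: a single family $g_\varepsilon$ for which the bound holds for all $k$ simultaneously. The paper achieves this by putting all of the $k$-dependence into the test functions rather than the metric: $g_\varepsilon$ is obtained by attaching one capped cylinder $[0,\frac1\varepsilon]\times\Sp^{n-1}$ (this can even be done conformally), and only at proof time, for a given $k$, is the cylinder subdivided into $2(k+1)$ sub-cylinders of length $\frac{1}{2(k+1)\varepsilon}$ carrying trapezoidal functions with gradient $\le 6(k+1)\varepsilon$; the pigeonhole and min-max steps are then identical to yours. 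Your argument can be repaired the same way --- replace the $k$ pendants by a single long thin neck, fixed once and for all, and subdivide it --- but as written the construction does not prove the statement. (The smoothing issue you flag at the junctions is not a real obstruction: the test functions only need to be Lipschitz, and one can make them constant on the transition zones at the cost of an absolute factor in the gradient bound; the paper handles the metric construction by citing standard references.)
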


The proofs of these  theorems rely on the  construction below. It is worth noticing that the one-parameter family of metrics $g_\varepsilon$ we will exhibit can be chosen within a fixed conformal class.  
Actually, we start with a Riemannian metric $g_0$ on $M$ that we conformally deform in the neighborhood of a point. 

\smallskip
\noindent
\textit{The construction.} We start with a metric $g_0$ on $M$ and choose a sufficiently small open set $V\subset M$ so that $g_0$ is 2-quasi-isometric to a flat metric in $V$. Since the eigenvalues corresponding to two quasi-isometric metrics are ``comparable",  we can assume w.l.o.g. that the metric $g_0$ is flat inside $V$. Therefore, there exists a positive $\delta$ so that $V$ contains a flat (Euclidean) ball of radius $\delta$.  After a possible dilation, we can assume that $\delta =1$. We deform this unit Euclidean ball   into a long capped cylinder (i.e. an Euclidean cylinder of radius $\delta$ closed by a spherical cap). This construction is standard and is explained, for example, in \cite[pp. 3856-57]{GePa}. We can even do it through a conformal deformation of $g_0$, as explained in \cite[ pp. 718-719]{CEforms}. Therefore, we obtain a family of Riemannian manifolds $(M,g_\varepsilon)$ so that $M$ is the union of three parts 
$$
M=M_0 \cup C \cup S_0^n
$$
with

- $M_0$ is an open subset of $M$ and $g_\varepsilon$ does not vary with $\varepsilon$ on $M_0$,

- $(C, g_\varepsilon )$ is isometric to the cylinder $[0,\frac{1}{\varepsilon}]\times \Sp^{n-1}$ 
of length $\frac{1}{\varepsilon}$ (with $0 < \varepsilon \le 1$),

- $ S_0^n$ is a round hemisphere of  radius $1$  which closes the end of the cylinder $C$ and $g_\varepsilon\vert_{ S_0^n} $ is the round metric (and is independent of $\varepsilon$).

\smallskip
The only varying parameter in this construction is the length $\frac{1}{\varepsilon}$ of the cylinder $(C,g_{\varepsilon})$. Notice that 
the  volume of $(M,g_{\varepsilon})$ is not equal to 1, but we will make a suitable scaling at the end of the proof.

\smallskip
In order to bound the eigenvalues $\mu_k^{g_\varepsilon}(\rho, 1)$ from above, we will use the GNY method  \cite{GNY}. To this end,  we need   a uniform control (w.r.t. $\varepsilon$) of the {\it packing constant} (see \cite[Definition 3.3 and Theorem 3.5]{GNY}) and of the volume growth of balls in $(M,g_{\varepsilon})$. This will be done in the following lemmas. For this purpose, we introduce the connected open subset  $\tilde M_0\subset M$ obtained as the union of $M_0$ and  the part of the cylinder which corresponds to  $(0,3d_0)\times \Sp^{n-1}\subset [0,\frac{1}{\varepsilon}]\times \Sp^{n-1}$, where $d_0$ is the diameter of $M_0$.

\smallskip

\begin{lemme}[volume growth of balls]\label{volume}  
There exist two positive constants $C_1$ and $C_2$, independent of $\varepsilon$, such that, for every ball $B_\varepsilon(x,r) $ in $(M,g_{\varepsilon})$ we have
\begin{equation}\label{volball}
\vert B_\varepsilon(x,r)\vert_{g_\varepsilon} \le \left\{
 \begin{array}{lll}
C_1 r^n &  \   & \text{if}  \ r \le 2 d_0\\
    C_2r &  \   &  \text{if}  \  r \ge 2 d_0
\end{array}
\right.
\end{equation}

\end{lemme}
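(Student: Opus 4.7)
The underlying idea is that $(M,g_\varepsilon)$ has locally bounded geometry uniformly in $\varepsilon$, so small balls enjoy a Euclidean-type volume estimate, whereas for large $r$ a ball can only spread along the one-dimensional axis of the cylinder, which forces linear growth in $r$.

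For the small-radius bound I would argue as follows. The three pieces constituting $(M,g_\varepsilon)$ all have intrinsic geometry independent of $\varepsilon$: the piece $M_0$ is fixed, the cylinder $[0,1/\varepsilon]\times \Sp^{n-1}$ has sectional curvatures equal to $0$ or $1$ irrespective of its length, and the hemisphere $S_0^n$ is a fixed round model. The gluing regions also have fixed shape. Consequently the Ricci curvature of $g_\varepsilon$ admits a uniform lower bound, and the Bishop--Gromov comparison yields a constant $C_1$ such that $\vert B_\varepsilon(x,r)\vert_{g_\varepsilon}\le C_1 r^n$ for every $x\in M$ and every $r\le 2d_0$.

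For the large-radius bound I would decompose
$$\vert B_\varepsilon(x,r)\vert_{g_\varepsilon}\le \vert M_0\vert_{g_\varepsilon}+\vert S_0^n\vert_{g_\varepsilon}+\vert B_\varepsilon(x,r)\cap C\vert_{g_\varepsilon}.$$
The first two terms are majorized by a constant independent of $\varepsilon$, since these pieces are fixed. For the last term, the key observation is that the projection $\pi:C=[0,1/\varepsilon]\times \Sp^{n-1}\to [0,1/\varepsilon]$ onto the first factor is $1$-Lipschitz with respect to $g_\varepsilon$; hence the image of $B_\varepsilon(x,r)\cap C$ under $\pi$ is contained in an interval of length at most $2r$ (centered at $\pi(x)$ when $x\in C$, or attached to one of the two gluing sections of $C$ otherwise). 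This gives $\vert B_\varepsilon(x,r)\cap C\vert_{g_\varepsilon}\le 2r\,\vert\Sp^{n-1}\vert$ and hence an overall bound of the form $C'r+C''$. Since we assume $r\ge 2d_0$, the additive constant can be absorbed into the linear term to produce $\vert B_\varepsilon(x,r)\vert_{g_\varepsilon}\le C_2 r$.

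I do not expect a serious obstacle: both estimates reduce to standard comparison arguments combined with the product structure of $C$. The only point requiring attention is to verify that the constants $C_1$ and $C_2$ are genuinely independent of $\varepsilon$, which is transparent since $\varepsilon$ affects only the length of $C$, and neither its local geometry nor the two fixed pieces $M_0$ and $S_0^n$.
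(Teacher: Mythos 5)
Your proof is correct, and it organizes the estimate differently from the paper. The paper argues by cases on the position of the ball: if $B_\varepsilon(x,r)$ misses $M_0$, it lies in the capped cylinder and is compared with a slab $(-r,r)\times\Sp^{n-1}$; if it meets $M_0$ and $r<2d_0$, it is contained in the fixed compact piece $\tilde M_0$; if it meets $M_0$ and $r\ge 2d_0$, it is contained in the union of a ball $B(x_0,2d_0)\subset\tilde M_0$ and a ball in the cylindrical part. You instead obtain the small-radius bound in one stroke from Bishop--Gromov, using that the Ricci curvature of $g_\varepsilon$ is bounded below independently of $\varepsilon$ (the only $\varepsilon$-dependence being the cylinder length), and the large-radius bound from the decomposition $\vert B\vert\le\vert M_0\vert+\vert S_0^n\vert+\vert B\cap C\vert$ together with the $1$-Lipschitz projection onto the cylinder axis. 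The two approaches deliver the same constants for the same underlying reason; yours avoids the case distinction on whether the ball meets $M_0$, while the paper's containment argument is slightly more elementary in that it never invokes curvature comparison for this lemma (Bishop--Gromov only enters in the companion packing lemma). Two small points you should make explicit: (a) the $1$-Lipschitz property of $\pi$ must hold for the distance of $(M,g_\varepsilon)$, not merely the intrinsic distance of $C$; this is true because a path joining two levels $t_1<t_2$ of the cylinder, even if it exits through one end, must on its final approach sweep all intermediate levels, so its length is at least $t_2-t_1$, and the same reasoning places $\pi\bigl(B_\varepsilon(x,r)\cap C\bigr)$ in an interval of length at most $2r$ when $x\notin C$; and (b) if one allows $\partial M\ne\emptyset$, the Bishop--Gromov volume upper bound requires a word of justification near the boundary, whereas the paper's argument only needs Euclidean-type volume growth on the fixed compact piece $\tilde M_0$, which follows from compactness alone.
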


\begin{proof} 
If $B_\varepsilon(x,r)\cap M_0= \emptyset$, then $B_\varepsilon(x,r)$ is isometric to a geodesic ball of radius $r$ of the capped cylinder and an obvious calculation shows that \eqref{volball} holds true with two constants $C_1$ and $C_2$ independent of $\varepsilon$ (in fact, we can compare the volume of  $B_\varepsilon(x,r)$ with the volume of $ (-r,r)\times \Sp^{n-1}$ to get $\vert B_\varepsilon(x,r)\vert_{g_\varepsilon}\le A r$ for some positive $A$).
If $B_\varepsilon(x,r)\cap M_0\ne \emptyset$ and $r< 2d_0$, then $ B_\varepsilon(x,r)$ is contained in $\tilde M_0$. Hence, there exists a constant $C$, depending only on $\tilde M_0$, such that  $\vert B_\varepsilon(x,r)\vert_{g_\varepsilon}\le C r^n$.
 If $B_\varepsilon(x,r)\cap M_0\ne \emptyset$ and $r\ge 2d_0$, then 
 $B_\varepsilon(x,r)$ is contained in the union of a ball $B(x_0,2d_0)\subset \tilde M_0$ centered at a point $x_0\in M_0$ and a ball of radius $r'\le r$ contained in the cylindrical  part.   Thus, $\vert B_\varepsilon(x,r)\vert_{g_\varepsilon}\le C 2^n d_0^n + A r\le C_2r$ for some positive $C_2$ which does not depend on $\varepsilon$.
 \end{proof}
 
\begin{lemme}\label{packing}  
There exists a constant $N$, independent of $\varepsilon$, such that any ball of radius $r>0$ in $(M,g_{\varepsilon})$ can be covered by $N$ balls of radius $\frac r2$.
\end{lemme}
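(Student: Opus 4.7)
The plan is to split the argument on the size of $r$. For small $r$, the key ingredient is a uniform lower bound on the Ricci curvature of $(M,g_\varepsilon)$: since $g_\varepsilon$ agrees with the fixed metric $g_0$ on $M_0$, with the fixed round metric of radius one on $S_0^n$, and with the product metric $dt^2+g_{\Sp^{n-1}}$ (whose Ricci tensor is nonnegative) on the cylindrical part, and since the smooth gluings at the two ends of the cylinder are $\varepsilon$-independent, there is a constant $K\geq 0$, independent of $\varepsilon$, such that $\mathrm{Ric}_{g_\varepsilon}\geq -(n-1)K\,g_\varepsilon$ on all of $M$. Fixing a threshold $R_0\geq 2(d_0+\pi)$, Bishop--Gromov then yields, for every $r\leq R_0$, a volume doubling ratio $D=D(n,K,R_0)$ independent of $\varepsilon$ and of $x\in M$, and the standard separation argument (select a maximal $r/4$-separated subset $\{x_1,\dots,x_N\}\subset B_\varepsilon(x,r)$; the balls $B_\varepsilon(x_i,r/2)$ then cover $B_\varepsilon(x,r)$, while the disjoint balls $B_\varepsilon(x_i,r/8)\subset B_\varepsilon(x,2r)\subset B_\varepsilon(x_j,3r)$ control their number via $N\leq D$) supplies a uniform covering.

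For the complementary regime $r>R_0$, I would use the explicit geometry of $(M,g_\varepsilon)$. Any ball $B_\varepsilon(x,r)$ is contained in the union $M_0\cup S_0^n$ together with a cylindrical slab of axial length at most $2r$. Since $r/2>d_0\geq\mathrm{diam}_{g_0}(M_0)$ and $r/2>\pi\geq\mathrm{diam}(S_0^n)$, each of the fixed pieces is covered by a single ball of radius $r/2$. For the cylindrical slab, any ball of radius $r/2\geq\pi$ in $\R\times\Sp^{n-1}$ contains the full spherical cross section over an axial interval of length $2\sqrt{r^2/4-\pi^2}$, so the slab is covered by at most $\lceil r/\sqrt{r^2/4-\pi^2}\,\rceil$ such balls, a quantity uniformly bounded as $r$ ranges over $(R_0,\infty)$.

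Combining the two regimes gives a uniform bound $N$ on the number of balls of radius $r/2$ needed to cover any ball of radius $r$ in $(M,g_\varepsilon)$, independent of $\varepsilon$, $x$, and $r$. The main subtleties are verifying the uniform Ricci lower bound in regime 1 (which reduces to observing that only the cylinder's length, not its cross-sectional geometry nor its gluings to the adjacent pieces, depends on $\varepsilon$) and choosing $R_0$ large enough in regime 2 so that each of $M_0$ and $S_0^n$ is swallowed by a single ball of radius $r/2$; neither of these is actually delicate, so the proof should be essentially routine.
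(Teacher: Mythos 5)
Your proof is correct, and it reaches the conclusion by a genuinely different case decomposition than the paper's. The paper splits by \emph{location} rather than by \emph{scale}: (a) if $B_\varepsilon(x,r)$ is disjoint from $M_0$, it lies in the capped cylinder, whose Ricci curvature is nonnegative, so Bishop--Gromov gives the Euclidean packing constant $N_E$; (b) if it meets $M_0$ and $r<2d_0$, it lies in the fixed compact piece $\tilde M_0$ and one uses the ($\varepsilon$-independent) packing constant $N(\tilde M_0)$ of that set; (c) if it meets $M_0$ and $r\ge 2d_0$, it is contained in the union of a ball of radius $2d_0$ in $\tilde M_0$ and a ball of radius at most $r$ in the capped cylinder, so $N_E+N(\tilde M_0)$ balls of radius $r/2$ suffice. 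Your scale-based split buys a more explicit treatment of large balls (single balls of radius $r/2$ swallowing $M_0$ and $S_0^n$, and an arithmetic count of cross-sectional slabs along the cylinder), at the price of having to justify a \emph{global} uniform Ricci lower bound for the family $g_\varepsilon$, including across the gluing regions --- a point you correctly isolate as the only thing to check, and which does hold because only the cylinder's length depends on $\varepsilon$. The paper's split avoids any curvature statement outside the capped cylinder by absorbing everything near $M_0$ into the fixed compact set $\tilde M_0$; this also sidesteps the minor issue that, were $\partial M\ne\emptyset$, the Bishop--Gromov step in your small-radius regime would have to be replaced, for balls touching $\partial M\subset M_0$, by the doubling constant of the fixed piece. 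Both routes rest on the same two ingredients (volume comparison plus the explicit structure of the construction) and both are routine once the decomposition is chosen.
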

\begin{proof} Let $B_\varepsilon(x,r) $ be a ball of radius $r$  in $(M,g_{\varepsilon})$.  If $B_\varepsilon(x,r)\cap M_0= \emptyset$, then, since $(M\setminus M_0, g_\varepsilon)$ is isometric to    the capped cylinder   whose Ricci curvature is everywhere nonnegative,  $B_\varepsilon(x,r) $ can be covered by $N_E$ balls of radius $\frac r2$, where $N_E$ is the packing constant of the Euclidean space $\R^n$ (Bishop-Gromov theorem).

Assume that $B_\varepsilon(x,r) \cap M_0\ne \emptyset$.  If $r< 2d_0$, then $B_\varepsilon(x,r) $ is contained in $\tilde M_0$. Thus, $B_\varepsilon(x,r) $ can be covered by $N(\tilde M_0)$ balls of radius $\frac r2$, where $N(\tilde M_0)$ is the the packing constant of  $\tilde M_0$.
If $r\ge 2d_0$, then 
$B_\varepsilon(x,r) $ is contained in the union of a ball $B_\varepsilon(x_0,2d_0)\subset \tilde M_0$ centered at a point $x_0\in M_0$ and a ball of radius $r'\le r$ contained in the capped cylinder.  Again, $B_\varepsilon(x,r) $ can be covered by $N_E + N(\tilde M_0)$ balls of radius $\frac r2$.
 \end{proof}

\begin{proof}[Proof of Theorem \ref{inf_sup_rho}] Let $\rho$ be a positive density on $M$ with $\fint_M\rho v_{g_\varepsilon} =1$.
Applying  \cite[Theorem 3.5]{GNY} to the metric measured space $(M,d_\varepsilon, \rho v_{g_\varepsilon} )$, where  $d_\varepsilon$ is the Riemannian distance associated to $g_\varepsilon$, we deduce the existence of $k+1$  annuli $A_1,\dots, A_{k+1}$ such that $\int_{A_j} \rho v_{g_\varepsilon}\ge \frac{\vert M\vert _{g_\varepsilon}}{Ck}$ and $2A_1,\dots 2A_{k+1}$ are mutually disjoint. Here, $C$ should depends on   the packing constant of $(M,g_\varepsilon)$, but since the latter  is dominated independently of $\varepsilon$, thanks to Lemma  \ref{packing}, we can assume that $C$ is independent of $\varepsilon$.

To each annulus  of the form 
 $A=B_\varepsilon(x,R) \setminus B_\varepsilon(x,r) $ we associate a function  $u_A$ defined  as in   \eqref {testfunct}.  We obtain
$$
R_{(g_\varepsilon, \rho, 1)}(u_A) =\frac{\int_{2A} \vert \nabla^\varepsilon  u_A\vert_{g_\varepsilon}^2 v_{g_\varepsilon}}{\int_{2A}   u_A^2 v_{g_\varepsilon}}\le  \frac{\frac 4{r^2}{\vert B_\varepsilon(x,r)\vert_{g_\varepsilon}} +\frac 1{R^2}{\vert B_\varepsilon(x,2R)\vert_{g_\varepsilon}} }{\int_{A} \rho v_{g_\varepsilon}}
.$$
Using Lemma \ref{volume}  we get for every $r>0$,
\begin{equation}
\frac 1{r^2}\vert B_\varepsilon(x,r)\vert_{g_\varepsilon} \le \left\{
 \begin{array}{lll}
C_1 r^{n-2}\le C_1 d_0^{n-2}  &  \   & \text{if}  \ r \le 2 d_0\\
    \frac {C_2}r \le  \frac {C_2}{2d_0} &  \   &  \text{if}  \  r \ge 2 d_0
\end{array}
\right.
\end{equation}
Therefore, there exists a constant $ C'$ which depends on $C_1$, $C_2$ and $d_0$ (but independent of $\varepsilon$), such that
$$
R_{(g_\varepsilon, \rho, 1)}(u_A) \le \frac { C'}{\int_{A} \rho v_{g_\varepsilon}}
.$$
Consequently, the $k+1$ annuli $A_1,\dots, A_{k+1}$ provide $k+1$ disjointly supported functions satisfying $R_{(g_\varepsilon, \rho, 1)}(u_{A_j})\le \frac { C'}{\int_{A_j} \rho v_{g_\varepsilon}}\le \frac {CC' k}{\vert M\vert_{g_\varepsilon}} $. Thus, 
$$\mu_k^{g_\varepsilon}( \rho, 1)\le C'' \frac{k}{\vert M\vert _{g_\varepsilon}}.$$

In order to obtain a family of metrics  of volume 1 we set $g'_{\varepsilon} = \frac{1}{\vert M\vert _{g_\varepsilon}^{2/n}} g_\varepsilon$.   Hence, for any $\rho$ such that $\fint_M \rho \, v_{g'_\varepsilon}=\fint_M \rho \, v_{g_\varepsilon} =1$, we have
$$\mu_k^{ g'_\varepsilon}( \rho, 1)=  {\vert M\vert _{g_\varepsilon}^{2/n}} \mu_k^{g_\varepsilon}( \rho, 1)\le C'' \frac{k}{\vert M\vert _{g_\varepsilon}^{1-\frac 2n}}.$$
 But $\vert M\vert _{g_\varepsilon}\ge \vert C\vert _{g_\varepsilon}\ge  \frac {n\omega_n}\varepsilon$. Thus  
 $$\mu_k^*(M,g'_\varepsilon)\le   C {k}\varepsilon^{1-\frac 2n }.$$
 \end{proof}


\begin{proof}[Proof of Theorem \ref{inf_sup_sigma}] Let $(M,g_\varepsilon)$ be as in the construction above and let $\sigma$ be such that $\int_M \sigma v_{g_\varepsilon}=\vert M\vert_{g_\varepsilon}$.
The cylindrical part $(C,g_{\varepsilon})$ of $(M,g_\varepsilon)$ can be decomposed into $2(k+1)$ small cylinders  $C_j\approx [\frac{j}{2(k+1)\varepsilon},\frac{j+1}{2(k+1)\varepsilon}] \times \Sp^{n-1}$, $j=0,...,2k+1$, of length $\frac{1}{2(k+1)\varepsilon}$.
At least $(k+1)$ cylinders among $C_0, \dots, C_{2k+1}$ have a measure with respect to $\sigma  $ which is less or equal to $\frac{\vert M\vert _{g_\varepsilon}}{k+1}$. To each such   $C_j$  we associate a function $f$ with support in $C_j$ and which is defined in $C_j$, through the obvious identification  between $C_j$ and   $[0,\frac{1}{2(k+1)\varepsilon}]\times \Sp^{n-1}$, 
as follows: $\forall (t,z)\in [0,\frac{1}{2(k+1)\varepsilon}]\times \Sp^{n-1}\approx C_j$, 
\begin{equation}
f(t,z)= \left\{
 \begin{array}{lll}
6(k+1)\varepsilon t &  \   & \text{if} \ \  0 \le t\le \frac{1}{6(k+1)\varepsilon}\\
1 &  \   &  \text{if}  \ \ \frac{1}{6(k+1)\varepsilon} \le t\le \frac{2}{6(k+1)\varepsilon}\\
-6(k+1)\varepsilon t+3  &  \   & \text{if} \ \  \frac{2}{6(k+1)\varepsilon}\le t\le \frac{3}{6(k+1)\varepsilon}.
\end{array}
\right.
\end{equation}
We have
$$
\int_Mf^2  v_{g_\varepsilon}\ge \int_{ [\frac{1}{6(k+1)\varepsilon},\frac{2}{6(k+1)\varepsilon}]\times \Sp^{n-1}}f^2 \ v_E = \frac{n\omega_n}{6(k+1)\varepsilon}
$$
where $v_E$ is the standard product measure. On the other hand,
 the norm of the gradient of $f$ is supported in $C_j$ and is dominated by $6(k+1)\varepsilon$. Thus, 
 $$
\int_M \vert \nabla^\varepsilon f\vert_{g_\varepsilon}^2\sigma v_{g_\varepsilon}  \le  (6(k+1)\varepsilon)^2 \int_{C_j}  \sigma v_{g_\varepsilon} \le (6(k+1)\varepsilon)^2 \frac{\vert M\vert _{g_\varepsilon}}{k+1}= 36(k+1)\varepsilon^2\vert M\vert _{g_\varepsilon}
$$
and  the Rayleigh quotient of $f$ satisfies
$$
R_{(g_\varepsilon, 1,\sigma)}(f) \le \frac{216(k+1)^2\varepsilon^3 \vert M\vert _{g_\varepsilon}}{n\omega_n}.
$$
Consequently, the $k+1$ chosen cylinders  provide $k+1$ disjointly supported functions satisfying the last inequality, which yields
$$\mu_k^{g_\varepsilon}(1,\sigma) \le C \vert M\vert _{g_\varepsilon}(k+1)^2 \varepsilon^3 $$
with $C= \frac{216}{n\omega_n}.$
Setting $g'_{\varepsilon}= \frac1{\vert M\vert _{g_\varepsilon}^{\frac2n}}g_{\varepsilon}$, we get
$$\mu_k^{g'_\varepsilon}(1,\sigma)=\vert M\vert _{g_\varepsilon}^{\frac2n} \mu_k^{g_\varepsilon}(1,\sigma) \le C \varepsilon^3 \vert M\vert _{g_\varepsilon}^{1+\frac 2n}(k+1)^2$$
with $\vert M\vert _{g_\varepsilon}= \vert \tilde M_0\vert _{g} +\vert C \vert _{g_\varepsilon} +\frac 12 n\omega_n\le \frac A\varepsilon$ for some constant $A$. 
 Thus
$$\mu_k^{**}(M,g'_\varepsilon)\le   C'\varepsilon^{2-\frac2n} (k+1)^2.$$
\end{proof}

\begin{rem}
The same type of construction used in the proof of Theorems \ref{inf_sup_rho}  and \ref{inf_sup_sigma} allows us to prove the existence of a family of bounded domains $\Omega_\varepsilon\subset\R^n$  of volume 1 such that $\mu_k^*(\Omega_\varepsilon,g_E) $ (resp. $\mu_k^{**}(\Omega_\varepsilon,g_E) $)  goes to zero with $\varepsilon$. This is to be compared with the result of Proposition \ref{conformaleucdomain}.
\end{rem}

We end this section with the following proposition in which we show how to produce   examples of manifolds $(M,g_\varepsilon)$  of fixed volume for which the ratio $\frac{\mu_1^*(M,g_\varepsilon)}{\lambda_1(M,g_\varepsilon)}$ (resp. $\frac{\mu_1^{**}(M,g_\varepsilon)}{\lambda_1(M,g_\varepsilon)}$)  tends to infinity as $\varepsilon\to 0$.  

\begin{prop}\label{small_big}
Let $M$ be a compact manifold and let $A$ be a positive constant. 

\noindent(i) There exists  a family of metrics $g_\varepsilon$ {\it of volume 1}  on $M$ and a constant $A>0$ such that  $\forall \varepsilon \in (0,1)$, $\lambda_1(M,g_\varepsilon)\le \varepsilon$ while $\mu_1^*(M,g_\varepsilon)\ge A$.

\smallskip
\noindent(ii) There exists  a family of metrics $g_\varepsilon$ {\it of volume 1 } on $M$  and a constant $A>0$ such that, $\forall \varepsilon \in (0,1)$, $\lambda_1(M,g_\varepsilon)\to 0$ while $\mu_1^{**}(M,g_\varepsilon)\ge A$.

\end{prop}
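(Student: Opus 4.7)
The plan is to use a common Cheeger-style dumbbell for both parts. Build $g_\varepsilon$ on $M$ so that $(M,g_\varepsilon)$ decomposes as two fixed regions $M_1,M_2 \subset M$ (with $g_\varepsilon\vert_{M_i}$ independent of $\varepsilon$) joined by a tube $T_\varepsilon$ whose $g_\varepsilon$-volume tends to zero, and normalize so $\vert M\vert_{g_\varepsilon}=1$. The standard test function equal to opposite nonzero constants on $M_1$ and $M_2$ (interpolated linearly on the tube), adjusted to have zero mean, yields $\lambda_1(M,g_\varepsilon) \to 0$, and one can arrange $\lambda_1(M,g_\varepsilon) \le \varepsilon$ by shrinking the tube fast enough.

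For part (i), combine the dumbbell with Corollary \ref{density_neumann} applied to $M_0=M_1$. For each $\delta>0$ it produces a smooth density $\rho_{\varepsilon,\delta}$ on $M$ with $\fint_M \rho_{\varepsilon,\delta}\,v_{g_\varepsilon}=1$ and
\[\mu_1^{g_\varepsilon}(\rho_{\varepsilon,\delta},1) \ge \frac{\vert M_1\vert_{g_\varepsilon}}{\vert M\vert_{g_\varepsilon}}\,\lambda_1(M_1,g_\varepsilon)-\delta.\]
By construction, $\vert M_1\vert_{g_\varepsilon}$ and the Neumann eigenvalue $\lambda_1(M_1,g_\varepsilon)$ are bounded below by positive constants independent of $\varepsilon$, so letting $\delta \to 0$ gives $\mu_1^*(M,g_\varepsilon) \ge A$ for some $A>0$.

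For part (ii), keep the same $g_\varepsilon$ and choose
\[\sigma_\varepsilon(x) = \begin{cases} 1 & \text{if $x\in M_1\cup M_2$,}\\ 1/\vert T_\varepsilon\vert_{g_\varepsilon} & \text{if $x\in T_\varepsilon$,}\end{cases}\]
smoothed near the interfaces and rescaled by a factor bounded in $\varepsilon$ so that $\fint_M \sigma_\varepsilon\,v_{g_\varepsilon}=1$. The claim to prove is $\mu_1^{g_\varepsilon}(1,\sigma_\varepsilon) \ge A'>0$. For $u$ with $\int_M u\,v_{g_\varepsilon}=0$, set $a_i = \fint_{M_i} u\,v_{g_\varepsilon}$. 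Since $\sigma_\varepsilon=1$ on $M_i$, Poincar\'e--Neumann gives $\int_{M_i}(u-a_i)^2\,v_{g_\varepsilon} \le C\int_{M_i}\vert\nabla u\vert^2\sigma_\varepsilon\,v_{g_\varepsilon}$. Parametrize $T_\varepsilon \cong [0,L]\times F_\varepsilon$ with $\vert F_\varepsilon\vert_{g_\varepsilon}=\vert T_\varepsilon\vert_{g_\varepsilon}/L$ and let $\bar u(t)$ denote the cross-sectional mean. Cauchy--Schwarz yields
\[\int_{T_\varepsilon}\vert\nabla u\vert^2\,v_{g_\varepsilon} \ge \vert F_\varepsilon\vert_{g_\varepsilon}\int_0^L \bar u'(t)^2\,dt \ge \frac{\vert F_\varepsilon\vert_{g_\varepsilon}}{L}\bigl(\bar u(L)-\bar u(0)\bigr)^2,\]
so multiplication by $\sigma_\varepsilon=1/\vert T_\varepsilon\vert_{g_\varepsilon}$ produces the key cancellation $\vert F_\varepsilon\vert_{g_\varepsilon}/\vert T_\varepsilon\vert_{g_\varepsilon}=1/L$, a positive constant independent of $\varepsilon$. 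A boundary trace estimate identifies $\bar u(0),\bar u(L)$ with $a_1,a_2$ up to an error controlled by $\int_{M_i}\vert\nabla u\vert^2\,v_{g_\varepsilon}$, while Poincar\'e on the cross section plus the bound on $\bar u$ shows that $\int_{T_\varepsilon} u^2\,v_{g_\varepsilon}$ is negligible compared with $\int_{M_1\cup M_2} u^2\,v_{g_\varepsilon}$. The mean-zero constraint then forces $(a_1-a_2)^2$ to be comparable to $\int_M u^2\,v_{g_\varepsilon}$, which delivers the desired lower bound.

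The main obstacle is part (ii): Corollary \ref{density_neumann} provides no analogue for the conductivity $\sigma$, so one has to combine by hand, uniformly in $\varepsilon$, Poincar\'e on each $M_i$, the longitudinal--transverse decomposition on $T_\varepsilon$, the Cauchy--Schwarz estimate above, and a trace inequality for $\bar u$ at the tube ends, without any $L^\infty$ control on $u$. The choice $\sigma_\varepsilon \sim 1/\vert T_\varepsilon\vert_{g_\varepsilon}$ is dictated precisely by the need to cancel the vanishing tube volume; any slower growth would leave a factor $\vert T_\varepsilon\vert_{g_\varepsilon}^{1-\alpha}\to 0$ in the weighted energy and destroy the lower bound.
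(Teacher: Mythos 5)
Your part (i) is correct and follows the paper's own route: a Cheeger-dumbbell deformation that leaves one region (in the paper, a Euclidean ball $V$ of volume $\tfrac12$) metrically fixed, followed by Corollary \ref{density_neumann} applied to that region; since its volume and its first Neumann eigenvalue are independent of $\varepsilon$, the lower bound on $\mu_1^*$ is uniform. Nothing to change there.

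Part (ii) has a genuine gap, located at the step ``a boundary trace estimate identifies $\bar u(0),\bar u(L)$ with $a_1,a_2$ up to an error controlled by $\int_{M_i}\vert\nabla u\vert^2\,v_{g_\varepsilon}$''. Since $\bar u(0)$ is the average of $u$ over the interface $F_\varepsilon$, Cauchy--Schwarz gives $\vert \bar u(0)-a_1\vert^2\le \frac{1}{\vert F_\varepsilon\vert}\int_{F_\varepsilon}(u-a_1)^2$, and the trace of $(u-a_1)^2$ on $\partial M_1$ is only controlled by $\Vert u-a_1\Vert^2_{H^1(M_1)}$; you therefore pick up an uncontrolled factor $1/\vert F_\varepsilon\vert\to\infty$. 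This is not a technicality: adding to $u$ a capacity-type bump in $M_1$ equal to $1$ on a $\delta_\varepsilon$-neighborhood of the attachment locus and $0$ outside a slightly larger ball changes $\bar u(0)$ by $1$ at an energy cost $O(\delta_\varepsilon^{n-2})$ for $n\ge 3$ (and $O(1/\log(1/\delta_\varepsilon))$ for $n=2$), all of it spent where $\sigma_\varepsilon=1$. For the same reason the test function equal to a constant on $M_1$ minus a small neighborhood of the attachment locus and to $0$ elsewhere shows that if the transition from unit scale to the thin scale happens inside the fixed lobes (and your setup forces some transition region, since a product tube $[0,L]\times F_\varepsilon$ with shrinking $F_\varepsilon$ cannot be glued to lobes whose metric and hence boundary are $\varepsilon$-independent), then $\mu_1^{g_\varepsilon}(1,\sigma_\varepsilon)\to 0$ no matter what weight you put on the tube. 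The paper's construction is designed precisely to avoid this: the neck is a warped product $dt^2+\gamma_\varepsilon(t)^2 g_{\Sp^{n-1}}$ over a \emph{fixed} cylinder, attached to the rest of $M$ at unit scale, and the weight is taken pointwise as $\sigma_\varepsilon=\gamma_\varepsilon^{-(n-1)}$, so that $\sigma_\varepsilon\, v_{g_\varepsilon}=v_g$ on the neck while $\vert\nabla f\vert^2_{g_\varepsilon}\ge\vert\nabla f\vert^2_{g}$ pointwise (because $\gamma_\varepsilon\le1$). One then gets $R_{(g_\varepsilon,1,\sigma_\varepsilon)}(f)\ge R_{(g,1,1)}(f)$ for \emph{every} $f$, and min-max yields $\mu_1^{g_\varepsilon}(1,\sigma_\varepsilon)\ge\lambda_1(M,g)$ with no Poincar\'e patching, no cross-sectional averaging and no trace inequality. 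Your constant weight $1/\vert T_\varepsilon\vert_{g_\varepsilon}$ has the right order of magnitude only where the cross-section is uniformly thin; the pointwise compensation of the cross-sectional volume, including in the transition zones, is what makes the argument close.
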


\begin{proof}

 (i) Let us start with a Riemannian metric $g$ of volume one on $M$  such that an open set $V$ of $M$ is isometric to the Euclidean ball of volume  $\frac 12$. By a standard argument (Cheeger Dumbbell construction), one can deform the metric $g$ outside  $V$ in a metric $g_\varepsilon$ of volume 1 such that $\lambda_1(M,g_\varepsilon)\le \varepsilon$.   Applying Corollary \ref{density_neumann} with $M_0=V$, we get  $\mu_1^*(M, g_\varepsilon)\ge \vert V\vert_{g_\varepsilon} \lambda_1(V,g_\varepsilon)=\frac 12 \lambda_1(V,g)$. Since $ \lambda_1(V,g) = (2\omega_n)^{\frac 2n} \lambda_1(B^n,g_E)$, where $B^n$ is the unit Euclidean ball, we get the desired inequality with $A= \frac12(2\omega_n)^{\frac 2n} \lambda_1(B^n,g_E)$.

 
 \smallskip
\noindent (ii) Let   $g$ be a Riemannian metric on $M$ such that an open subset $V$ of $M$ is isometric to the  capped cylinder $C=(-2,2)\times \Sp^{n-1}$ closed by a spherical cap. 
We will deform the metric $g$ inside $V$ so that  $(M,g_{\varepsilon})$ looks like a Cheeger dumbbell (thus $\lambda_1(M,g_{\varepsilon})\to 0$ as $\varepsilon \to 0$) and associate to $g_{\varepsilon}$ a family of densities such that   $\mu_1^{g_{\varepsilon}}(1,\sigma_\varepsilon) \ge A>0$. Indeed, 
the metric on the cylinder $C=(-2,2)\times \Sp^{n-1}$ is given in coordinates  
$(t,x)\in (-2 ,  2)\times \Sp^{n-1}$ by $g_\varepsilon(t,x)=dt^2+ \gamma_{\varepsilon}^2(t) g_{\Sp^{n-1}}$ with $\gamma_{\varepsilon} (-t)=\gamma_{\varepsilon} (t)$ and

\begin{equation}
\gamma_\varepsilon (t)= \left\{
 \begin{array}{lll}
   \varepsilon &  \   &  \text{if}  \  t\in      [0  ,  \frac{1} {2}]\\
    \in (\varepsilon, 1) &  \   &  \text{if}  \  t\in      [\frac{1}{2},1 ]\\
    1 &  \   & \text{if}  \ t\in [1  ,  2)
\end{array}
\right.
\end{equation}
We do not change the metric $g$ outside $V$. We endow $(M,g_{\varepsilon})$ with  the density $ \sigma_\varepsilon $ given by  $\sigma_\varepsilon (t,x)=\frac {1}{\gamma_{\varepsilon} (t)^{n-1}}$ on the cylinder $C$ and extended  by $1$ outside $C$.

\smallskip
It is well known that $\lambda_1(M,g_{\varepsilon}) \to 0$ as $\varepsilon \to 0$.
Let us study $\mu_1^{g_{\varepsilon}}(1,\sigma_{\varepsilon})$. One has  for every  
 $f\in C^\infty (M)$ 
$$\int_M \vert \nabla^\varepsilon f\vert_{g_\varepsilon}^2 \sigma_\varepsilon v_{g_\varepsilon} =
\int_{M\setminus C} \vert \nabla f\vert_{g}^{2} v_{g}+  \int_{-2}^{2}dt \int_{\Sp^{n-1}} \vert \nabla^\varepsilon f\vert_{g_\varepsilon}^2 \sigma_\varepsilon(t)\gamma_\varepsilon (t)^{n-1} v_{\Sp^{n-1}}$$
$$  \qquad   =\int_{M\setminus C} \vert \nabla f\vert_{g}^{2} v_{g}+  \int_{-2}^{2}dt \int_{\Sp^{n-1}} \vert \nabla^\varepsilon f\vert_{g_\varepsilon}^2 v_{\Sp^{n-1}}$$
where $v_{\Sp^{n-1}}$ denotes the volume form on the sphere $\Sp^{n-1}$.
Now, observe that  $ \vert\nabla^\varepsilon f\vert_{g_\varepsilon}^2$ can be estimated as follows:
$$ \vert\nabla^\varepsilon f\vert_{g_\varepsilon}^2= \left(\frac{\partial f}{\partial t}\right)^2+\vert \nabla_{0} f\vert^2 \gamma_{\varepsilon}(t)^{-2} \ge \left(\frac{\partial f}{\partial t}\right)^2+\vert \nabla_{0} f\vert^2 =  \vert \nabla f\vert_{g}^2$$
where $\nabla_0f$ is the tangential part of the gradient of $f$ w.r.t.   $\Sp^{n-1}$. Therefore,
$$\int_M \vert \nabla^\varepsilon f\vert_{g_\varepsilon}^2 \sigma_\varepsilon v_{g_\varepsilon}  \ge \int_{M\setminus C} \vert \nabla f\vert_{g}^{2} v_{g}+\int_{-2}^{2}dt \int_{\Sp^{n-1}}  \vert \nabla f\vert_{g}^2  v_{\Sp^{n-1}} = \int_M \vert \nabla f\vert_{g}^2  v_{g}.$$
On the other hand (since $\gamma_\varepsilon (t)^{2}\le 1$)
$$\int_M f^2  v_{g_\varepsilon} \le \int_{M }f^2 v_{g}.$$
In conclusion, for every $f\in C^\infty (M)$, one has 
$$R_{(g_{\varepsilon},1,\sigma_{\varepsilon})}(f)\ge R_{(g,1,1)}(f).
$$
It follows, thanks to the min-max principle, that 
$$\mu_1^{g_{\varepsilon}}(1,\sigma_{\varepsilon})\ge \lambda_1(M,g).$$ 
The last point is to suitably rescale $g_{\varepsilon}$ and $\sigma_\varepsilon$. For this purpose, just observe that $\int_M \sigma_{\varepsilon}v_{g_{\varepsilon}} =\vert M\vert_g$ and $\frac 12 \vert M\vert_g\le \vert M\vert_{g_\varepsilon}\le \vert M\vert_g$.
\end{proof}

 \section{Examples } \label{examples}
 
 In this section we describe  situations in which we can compute or give explicit estimates for the first extremal  eigenvalues. 
Let $(M,g) $ be a compact Riemannian manifold of dimension $n\ge 2$, possibly with a nonempty boundary. 
 
 \begin{prop}\label{confsphere} Assume that there exists a conformal map $\phi$ from $(M,g)$ to the standard $n$-dimensional sphere $\Sp^n$. Then, 
 \begin{equation}\label{confsphere1}
\lambda_1^c(M,g)=n  {\alpha_n}^{\frac2n}
\end{equation}
and
 \begin{equation}\label{confsphere2}
 \mu_1^*(M,g)\le n  \left(\frac{\alpha_n}{|M|_g}\right)^{\frac2n}
 \end{equation}
where   $\alpha_n$ is the volume of the unit Euclidean $n$-sphere. Moreover, if $n=2$, then the equaliy holds in \eqref{confsphere2}.
  \end{prop}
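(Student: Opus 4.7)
The plan is to adapt Hersch's classical balancing argument, using the $n+1$ coordinates of the conformal map $\phi : (M,g)\to \Sp^n \subset \R^{n+1}$ as a vector-valued test function. Note that the lower bound in \eqref{confsphere1} is already guaranteed by \eqref{confspec1}, so only the matching upper bound needs to be established, and it will fall out of the same computation that proves \eqref{confsphere2}.

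First, I will fix a positive density $\rho$ with $\int_M \rho\,v_g = |M|_g$ and invoke Hersch's lemma: pushing $\rho v_g$ forward to $\Sp^n$ via $\phi$ and applying the standard topological-degree argument on the conformal group of $\Sp^n$, one obtains a M\"obius transformation $\gamma$ such that $\Phi := \gamma\circ\phi : M \to \Sp^n\subset\R^{n+1}$ is balanced: $\int_M \Phi_i\,\rho\,v_g = 0$ for every $i=1,\dots,n+1$. Each $\Phi_i$ is then admissible in the variational characterization \eqref{intro0} of $\mu_1^g(\rho,1)$, and summing the resulting $n+1$ inequalities, using $\sum_i \Phi_i^2 \equiv 1$ on $\Sp^n$ and $\int_M \rho\, v_g = |M|_g$, gives
\begin{equation*}
\mu_1^g(\rho,1)\,|M|_g \;\le\; \int_M |d\Phi|_g^2\, v_g .
\end{equation*}
Next, because $\Phi$ is conformal, $\Phi^\ast g_s = f\,g$ for some $f>0$; hence $|d\Phi|_g^2 = \operatorname{tr}_g(\Phi^\ast g_s) = n f$, and since $\phi$ (hence $\Phi$) is a diffeomorphism onto $\Sp^n$, $\int_M f^{n/2}\,v_g = |\Sp^n|_{g_s} = \alpha_n$. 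H\"older's inequality with exponents $n/2$ and $n/(n-2)$ then yields $\int_M f\,v_g \le \alpha_n^{2/n}\,|M|_g^{1-2/n}$, so that $\mu_1^g(\rho,1)\le n(\alpha_n/|M|_g)^{2/n}$; taking the supremum over $\rho$ establishes \eqref{confsphere2}.

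The upper bound in \eqref{confsphere1} follows by applying the very same argument with $\rho\equiv 1$ to an arbitrary $g'\in [g]$ (the map $\phi$ remains conformal from $(M,g')$ to $\Sp^n$ since the conformal class is preserved), giving $\lambda_1(M,g')\,|M|_{g'}^{2/n}\le n\alpha_n^{2/n}$. In dimension $2$, \eqref{conform_rho} together with \eqref{confsphere1} yields $\mu_1^*(M,g) = \lambda_1^c(M,g)/|M|_g = 2\alpha_2/|M|_g$, which is precisely $n(\alpha_n/|M|_g)^{2/n}$ for $n=2$, so equality holds in \eqref{confsphere2}. I expect the subtlest point to be the Hersch balancing step; it is routine when $\phi$ is a conformal diffeomorphism (the pushforward $\phi_\ast(\rho v_g)$ then has positive density on $\Sp^n$, so no atomic obstruction arises), but would require extra care if $\phi$ were allowed to be a branched cover. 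Apart from this, the proof consists only of a trace identity for the energy density of a conformal map and one application of H\"older's inequality.
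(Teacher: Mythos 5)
Your proof is correct and follows essentially the same route as the paper's: Hersch balancing via a M\"obius transformation, the coordinates of $\gamma\circ\phi$ as test functions for $\mu_1^g(\rho,1)$, H\"older with exponents $n/2$ and $n/(n-2)$, and the conformal identity $\psi^*g_s=\tfrac1n|d\psi|^2g$. The only (harmless) deviation is your claim that $\phi$ is a diffeomorphism \emph{onto} $\Sp^n$ so that $\int_M f^{n/2}v_g=\alpha_n$: the hypothesis only provides a conformal map, and in the intended applications (Corollary \ref{mu1sup}) the image is a proper subset of the sphere, so one should only assert $\int_M f^{n/2}v_g=|\psi(M)|_{g_s}\le\alpha_n$ --- which is exactly what the paper uses and all the argument needs.
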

Notice that when $(M,g)$ is the standard sphere $\Sp^n$, then the equaliy holds in \eqref{confsphere2} (see Corollary \ref{homogene-irred} below).

 \begin{proof}[Proof of Propositon \ref{confsphere}]   
Let us first  prove   \eqref{confsphere2}.
 Let $\rho$ be a density on $M$ with $\int_M\rho v_g=1$.
Given any nonconstant map $\phi=(\phi_1,\cdots,\phi_{n+1}):(M,g)\to \Sp^n$,  a standard argument tells us that there exists a conformal diffeomorphism ${\gamma\in Conf(\Sp^n)}$ such that $\psi=\gamma\circ \phi$ satisfies $\int_M \psi_j \rho \, v_g =0$, $j=1\dots, n+1$  (see for instance \cite[Proposition 4.1.5]{GNP}).
Thus, $\forall j\le n+1$,
$$\mu_1( \rho,1)\int_M\psi_j^2 \rho \, v_g \le\int_M \vert \nabla\psi_j \vert ^2 v_g $$
(see \eqref{intro0}) and, summing up w.r.t. $j$, 
$$\mu_1( \rho,1)\int_M\rho \, v_g \le\int_M \vert d\psi\vert ^2 v_g \le \left(\int_M \vert d\psi\vert ^n v_g\right)^{\frac2n} \vert M\vert_g^{1-\frac2n}.$$
Since $\psi=\gamma\circ\phi$ is a conformal map,  $\int_M \vert d\psi\vert ^n v_g$ is nothing but $n^{\frac n2}$ times the volume of $\psi(M)\subset\Sp^n$ with respect to the standard metric $g_s$ of $\Sp^n$ (indeed, $\psi^*g_s=\frac 1n \vert d\psi\vert ^2g$). Therefore, 
$$\mu_1( \rho,1)\fint_M\rho v_g\le n\vert \psi(M)\vert_{g_s}^{\frac 2n} \vert M\vert_g^{-\frac2n}\le n  \left(\frac{\alpha_n}{|M|_g}\right)^{\frac2n}$$
which proves  \eqref{confsphere2}.

 Using the same arguments we can prove the inequality $\lambda_1^c(M,g)\le n  {\alpha_n}^{\frac2n}$. The reverse inequality follows from \cite[Theorem A]{CE}. 
   \end{proof}  
   
   It is well known that the Euclidean space $\R^n$ and the hyperbolic space $\mathbb H^n$  are conformally equivalent to open parts of the sphere $\Sp^n$. This leads to the following corollary.  
   
   \begin{cor}\label{mu1sup} Let $\Omega$ be a  bounded domain of the Euclidean space $\R^n$, the hyperbolic space $\mathbb H^n$ or the sphere $\Sp^n$, endowed with the induced metric $g_s$. One has
$$\lambda_1^c(\Omega,g_{s})=n  {\alpha_n}^{\frac2n}$$
and
$$\mu_1^*(\Omega,g_{s})\le n  \left(\frac{\alpha_n}{|\Omega|}\right)^{\frac2n}.$$
Moreover,  the following equality holds in   dimension 2: $\mu_1^*(\Omega,g_{s})=\lambda_1^c(\Omega,g_{s}){|\Omega|}^{-1}=\frac{8\pi }{|\Omega|}.$
\end{cor}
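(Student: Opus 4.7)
The plan is to reduce everything to Proposition \ref{confsphere} by exhibiting, in each of the three cases, a conformal map from $\Omega$ to the standard sphere $\Sp^n$. Once such a map is produced, the estimate $\lambda_1^c(\Omega,g_s)\le n\alpha_n^{2/n}$ together with $\mu_1^*(\Omega,g_s)\le n(\alpha_n/|\Omega|)^{2/n}$ follow immediately from \eqref{confsphere1} and \eqref{confsphere2}, while the reverse inequality $\lambda_1^c(\Omega,g_s)\ge n\alpha_n^{2/n}$ is provided by the general lower bound \eqref{confspec1}, which applies to bounded domains under Neumann conditions exactly as in the closed case.

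For the three geometries, the conformal maps are classical. When $\Omega\subset \Sp^n$, the inclusion itself is conformal (in fact isometric). When $\Omega\subset \R^n$, inverse stereographic projection from a pole not meeting $\overline\Omega$ provides a conformal embedding $\R^n\hookrightarrow \Sp^n\setminus\{\text{pt}\}$, whose restriction to $\Omega$ is the desired map. When $\Omega\subset \mathbb H^n$, one uses the Poincar\'e ball model to identify $(\mathbb H^n,g_{\mathbb H})$ conformally with the open unit Euclidean ball, and then composes with inverse stereographic projection; this gives a conformal embedding $\mathbb H^n\hookrightarrow \Sp^n$ onto an open hemisphere, which restricts to $\Omega$. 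In all three cases the composition $\phi\colon\Omega\to\Sp^n$ is conformal, so Proposition \ref{confsphere} applies and yields the stated inequalities.

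For the equality assertion in dimension $n=2$, recall the identity \eqref{dim2}, namely $\mu_k^g(\rho,1)=\lambda_k(M,\rho g)$, and its consequence \eqref{conform_rho}, $\mu_1^*(M,g) = \lambda_1^c(M,g)/|M|_g$. Applying this to $(\Omega, g_s)$ and using the already established value $\lambda_1^c(\Omega,g_s)=2\alpha_2=8\pi$ (since $\alpha_2=4\pi$ is the area of $\Sp^2$) gives $\mu_1^*(\Omega,g_s)=8\pi/|\Omega|$, which matches $n(\alpha_n/|\Omega|)^{2/n}$ for $n=2$.

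There is essentially no obstacle beyond recalling the three standard conformal equivalences; the real content is carried by Proposition \ref{confsphere} and by \eqref{confspec1}, which were established earlier. The only mild subtlety is ensuring that the lower bound $\lambda_k^c(M,[g])\ge n\alpha_n^{2/n}k^{2/n}$ from \eqref{confspec1}, originally stated for closed manifolds, remains valid for bounded domains with Neumann boundary conditions; this is guaranteed by the remark in Section \ref{extremal} that the proofs of \cite{CE} carry over verbatim to the Neumann setting.
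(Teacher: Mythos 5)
Your proposal is correct and follows exactly the route the paper intends: the corollary is presented as an immediate consequence of Proposition \ref{confsphere} via the classical conformal identifications of $\R^n$ and $\mathbb H^n$ with open subsets of $\Sp^n$ (stereographic projection and the Poincar\'e ball model), with the two-dimensional equality coming from \eqref{conform_rho}. You have merely made explicit the standard details that the paper leaves to the reader.
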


\begin{rem}
Let $D$ be the unit disc in $\R^2$ and let $\rho_t=\frac{4t}{(t^2\vert z\vert^2+1)^2} $. Then 
$$\mu_1^*(D,g_{E}) =\lim_{t\to\infty}\mu_1^{g_{E}}(\frac{\rho_t}{\fint_D\rho_t dx},1) = 8.$$
Indeed, the map $\phi_t(z)= \frac 1{t^2\vert z\vert^2+1}(2 tz,   t^2\vert z\vert^2-1)$ identifies $(D, \frac{4t}{(t^2\vert z\vert^2+1)^2} g_E)$ with  a spherical cap $C_t$ in $\Sp^2$ whose radius goes to $\pi$ as $t\to\infty$. Hence,
$\mu_1^{g_{E}}(\rho_t,1)\int_D\rho_t dx = \mu_1(C_t)\vert C_t\vert$ which converges to $8\pi$ as $t\to\infty$.
 
\end{rem}

    \begin{prop}\label{constantenergy} Assume that there exists a map $\phi:(M,g)\to\Sp^p$ from $(M,g)$ to the standard $p$-dimensional sphere $\Sp^p$ satisfying both $\int_M\phi v_g=0$ and $\vert d\phi\vert^2\le\Lambda$ for some positive constant  $\Lambda$. Then
 \begin{equation}\label{constantenergy1}
    \mu_1^{**}(M,g)\le \Lambda.
\end{equation}

  \end{prop}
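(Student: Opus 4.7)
\medskip
\noindent\textbf{Proposed proof.} The plan is to apply the variational characterization \eqref{intro0} directly, using the components of $\phi$ as test functions. By definition,
$$
\mu_1^{**}(M,g) = \sup_\sigma \frac{\mu_1^g(1,\sigma)}{\fint_M \sigma\, v_g},
$$
so it suffices to show that $\mu_1^g(1,\sigma) \le \Lambda$ for every positive density $\sigma$ with $\fint_M \sigma\, v_g = 1$, i.e.\ with $\int_M \sigma\, v_g = |M|_g$.

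Fix such a $\sigma$ and write $\phi = (\phi_1,\dots,\phi_{p+1})$. According to \eqref{intro0},
$$
\mu_1^g(1,\sigma) = \inf\left\{\frac{\int_M |\nabla u|^2 \sigma\, v_g}{\int_M u^2\, v_g}\ :\ u\in H^1(M),\ \int_M u\, v_g = 0\right\}.
$$
The hypothesis $\int_M \phi\, v_g = 0$ means that each component $\phi_j$ is an admissible test function, so for every $j$,
$$
\mu_1^g(1,\sigma) \int_M \phi_j^2\, v_g \le \int_M |\nabla \phi_j|^2 \sigma\, v_g.
$$

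Now I would sum over $j = 1,\dots,p+1$. Since $\phi$ takes values in $\mathbb S^p$, we have $\sum_j \phi_j^2 = 1$ pointwise, hence $\sum_j \int_M \phi_j^2\, v_g = |M|_g$. On the other hand, by definition $\sum_j |\nabla \phi_j|^2 = |d\phi|^2$, which is bounded by $\Lambda$. Therefore
$$
\mu_1^g(1,\sigma)\, |M|_g \le \int_M |d\phi|^2\, \sigma\, v_g \le \Lambda \int_M \sigma\, v_g = \Lambda\, |M|_g,
$$
which gives $\mu_1^g(1,\sigma) \le \Lambda$ and, since $\sigma$ was arbitrary, $\mu_1^{**}(M,g) \le \Lambda$.

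There is no real obstacle here: the statement is tailor-made for the min-max principle, and the two assumptions serve precisely to make the components of $\phi$ admissible test functions (via $\int_M \phi\, v_g = 0$) and to bound the numerator of the Rayleigh quotient (via $|d\phi|^2 \le \Lambda$). The only small point worth noting is that the orthogonality condition in \eqref{intro0} involves $v_g$ rather than $\sigma v_g$ when $\rho = 1$, which matches exactly the hypothesis placed on $\phi$.
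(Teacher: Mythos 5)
Your proof is correct and is essentially identical to the paper's: both use the components $\phi_j$ as test functions in the variational characterization of $\mu_1^g(1,\sigma)$ (admissible thanks to $\int_M\phi\, v_g=0$), sum over $j$ using $\sum_j\phi_j^2=1$ and $\sum_j|\nabla\phi_j|^2=|d\phi|^2\le\Lambda$, and conclude. No differences worth noting.
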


 \begin{proof} 
One has, for every $j\le p+1$, 
$$\mu_1( 1,\sigma)\int_M\phi_j^2  \, v_g \le\int_M \vert \nabla\phi_j \vert ^2\sigma v_g $$
and, summing up w.r.t. $j$, 
$$\mu_1( 1,\sigma)\vert M\vert_g \le\int_M \vert d\phi\vert ^2 \sigma v_g \le \Lambda \int_M  \sigma v_g$$
which implies \eqref{constantenergy1}.
   \end{proof}  
   
If   $(M,g)$  be a compact homogeneous Riemannian manifold, and if $\phi_1,\dots,\phi_p$ is an $L^2$-orthonormal  basis of the first eigenspace of the Laplacian, then both $\sum_{i\le p} \phi_i^2$ and $ \vert d\phi \vert ^2=\sum_{i\le p} \vert d\phi_i\vert^2$ are constant on $M$. This enables us to apply Proposition \ref{constantenergy} and get the following
   \begin{cor} Let $(M,g)$  be a compact homogeneous Riemannian manifold. Then 
  $$ \mu_1^{**}(M,g)=\mu_1(M,g)$$
\end{cor}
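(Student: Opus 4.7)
The plan is to combine Proposition \ref{constantenergy} with the classical fact that, on a compact homogeneous manifold, the first eigenspace of the Laplacian gives rise to an isometric (up to scaling) immersion into a round sphere of constant energy density. The upper bound $\mu_1^{**}(M,g)\le \mu_1(M,g)$ will come from this construction, and the reverse inequality is essentially immediate.

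First I would fix an $L^2(M,v_g)$-orthonormal basis $\phi_1,\dots,\phi_p$ of the eigenspace $E_1$ associated with the first positive eigenvalue $\mu_1(M,g)$. The key homogeneity step is that for any isometry $\gamma$ of $(M,g)$, the functions $(\phi_i\circ\gamma)_{i\le p}$ form another orthonormal basis of $E_1$, so there exists $U(\gamma)\in O(p)$ with $\phi_i\circ\gamma=\sum_j U_{ij}(\gamma)\phi_j$. Consequently $\sum_i\phi_i^2$ and $\sum_i |\nabla\phi_i|^2$ are invariant under $\gamma$, and by transitivity of the isometry group they are constant on $M$. Integrating $\sum_i\phi_i^2$ against $v_g$ using the orthonormality, and $\sum_i|\nabla\phi_i|^2$ using the eigenvalue equation, one finds
\begin{equation*}
\sum_{i=1}^p\phi_i^2\equiv\frac{p}{\vert M\vert_g}, \qquad \sum_{i=1}^p|\nabla\phi_i|^2\equiv\frac{p\,\mu_1(M,g)}{\vert M\vert_g}.
\end{equation*}

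Next I would set $\Phi=\sqrt{\vert M\vert_g/p}\,(\phi_1,\dots,\phi_p)\colon M\to\R^p$. The first identity above shows that $\Phi$ takes values in the unit sphere $\Sp^{p-1}\subset\R^p$, the second identity gives $|d\Phi|^2\equiv\mu_1(M,g)$, and the orthogonality of each $\phi_i$ to the constants gives $\int_M\Phi\,v_g=0$. Thus $\Phi$ satisfies the hypotheses of Proposition \ref{constantenergy} with $\Lambda=\mu_1(M,g)$, so that
\begin{equation*}
\mu_1^{**}(M,g)\le \mu_1(M,g).
\end{equation*}
For the converse, the constant density $\sigma\equiv 1$ lies in the class over which the supremum is taken (since $\fint_M 1\,v_g=1$) and $\mu_1^g(1,1)=\mu_1(M,g)$, so $\mu_1^{**}(M,g)\ge \mu_1(M,g)$, and equality follows.

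The verification is largely formal once the two constancy statements are established, so the only substantive step is the homogeneity argument showing that $\sum_i\phi_i^2$ and $\sum_i|\nabla\phi_i|^2$ are constant; this is a standard consequence of the transitivity of the isometry group acting on $E_1$, and is the place one must invoke the hypothesis in a nontrivial way. A minor point to handle carefully is the case $p=1$, where $\Phi$ maps into $\Sp^0=\{\pm 1\}$; but in that case the single normalized eigenfunction already satisfies $\phi_1^2\equiv 1/\vert M\vert_g$ and $|\nabla\phi_1|^2\equiv \mu_1(M,g)/\vert M\vert_g$, and one can run the same Rayleigh-quotient argument used in the proof of Proposition \ref{constantenergy} directly, bypassing any issue with the target sphere.
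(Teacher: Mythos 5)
Your proof is correct and follows essentially the same route as the paper: the paper likewise takes an $L^2$-orthonormal basis of the first eigenspace, uses homogeneity to conclude that $\sum_i\phi_i^2$ and $\sum_i|\nabla\phi_i|^2$ are constant, and then applies Proposition \ref{constantenergy}; you merely spell out the details (the $O(p)$-action on the eigenspace, the normalization of $\Phi$, the trivial lower bound via $\sigma\equiv 1$) that the paper leaves implicit.
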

In other words, on  a compact homogeneous Riemannian manifold, $\mu_1(1,\sigma)$ is maximized when $\sigma $ is constant. 

\begin{ex}In \cite{EIR}, it is proved that 
if
$\Gamma={\mathbb Z}e_1+ {\mathbb Z}e_2 \subset {\mathbb R}^2$ is a
lattice such that $|e_1 | =|e_2|$, then the corresponding flat metric
$g_{_\Gamma}$ on the torus ${\mathbb T}^2$ satisfies $ \mu_1 ^c ({\mathbb
T}^2, g_{_\Gamma})=\lambda_1 ({\mathbb
T}^2,g_{_\Gamma})\vert {\mathbb T}^2 \vert_{g_{_\Gamma}}$. A higher dimensional
version of this result was also established in \cite{EI4}. Since a flat Torus is a 2-dimensional homogeneous Riemannian manifold, we have the following equalities
  $$\lambda_1^c ({\mathbb
T}^2, g_{_\Gamma})\vert {\mathbb T}^2 \vert_{g_{_\Gamma}}^{-1}
=\mu_1^{*}({\mathbb T}^2, g_{_\Gamma})=\mu_1^{**}({\mathbb T}^2, g_{_\Gamma})=\lambda_1  ({\mathbb
T}^2, g_{_\Gamma}).$$
Neverthless, whereas we always have $\mu_1^{**}({\mathbb T}^2, g_{_\Gamma})=\mu_1  ({\mathbb
T}^2, g_{_\Gamma})$, it follows from \cite[Theorem A]{CE} that when the length ratio $|e_2| /
|e_1 |$ of the vectors $e_1$ and $e_2$ is sufficiently far from 1,
then $\mu_1^{*}({\mathbb T}^2, g_{_\Gamma})= \lambda_1 ^c ({\mathbb
T}^2, g_{_\Gamma})\vert {\mathbb T}^2 \vert_{g_{_\Gamma}}^{-1}>\lambda_1 ({\mathbb
T}^2,g_{_\Gamma})$. 
\end{ex}

Recall that a map $\phi=(\phi_1,\cdots,\phi_{p+1}):(M,g)\to \Sp^p$ is harmonic if and only if its components $ \phi_1,\cdots,\phi_{p+1}$ satisfy
$$\Delta_g\phi_j = -\vert d\phi\vert ^2  \phi_j, \quad j=1\cdots, p+1.$$
The stress-energy tensor of a map $\phi$ is a symmetric covariant 2-tensor  defined for every tangent vectorfield $X$ on $M$ by: $S_\phi (X,X)=\frac 12 \vert d\phi\vert ^2 \vert X\vert^2_g-\vert d\phi (X)\vert^2$. In \cite[Theorem 3.1]{E} it is proved that  if the stress-energy tensor of a harmonic map $\phi$ is nonnegative, then, for every conformal diffeomorphism $\gamma$ of the sphere  $\Sp^p$ one has
$$\int_M \vert d(\gamma\circ\phi)\vert ^2 v_g \le \int_M \vert d\phi\vert ^2 v_g.$$
Moreover, the strict inequality holds if $\gamma$ is not an isometry and if  $S_\phi$ is positive definite at some point.  
Observe that if $\phi:(M,g)\to \Sp^p$ is a conformal map or a horizontally conformal map, then  $S_\phi$ is nonnegative (see \cite{E}).

  \begin{prop}\label{harmonicmap} Assume that there exists a harmonic map $\phi:(M,g)\to\Sp^p$  with nonnegative stress-energy  tensor. Then, 
 \begin{equation}\label{harmonicmap1}
 \mu_1^*(M,g)\le\fint_M \vert d\phi\vert^2 v_g.
  \end{equation}

  \end{prop}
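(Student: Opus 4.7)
The plan is to follow the same scheme as the proof of Proposition \ref{confsphere}, replacing the conformal invariance of $\int_M|d\psi|^n v_g$ (which is only useful when $n=p$) by the energy-decreasing property of conformal reparametrizations of harmonic maps with nonnegative stress-energy tensor, namely Theorem 3.1 of \cite{E} that is recalled just before the statement.

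Fix a density $\rho$ with $\fint_M \rho\, v_g = 1$, i.e., $\int_M \rho\, v_g = \vert M\vert_g$. First I would apply the standard Hersch-type center-of-mass argument (as in \cite[Proposition 4.1.5]{GNP} and as already used in the proof of Proposition \ref{confsphere}) to the probability measure $\frac{1}{|M|_g}\rho\, v_g$ pushed forward by $\phi$: assuming $\phi$ nonconstant (otherwise the inequality is trivially false unless $\mu_1^*=0$, so this case must be excluded implicitly), there exists a conformal diffeomorphism $\gamma\in \mathrm{Conf}(\Sp^p)$ such that the components of $\psi=\gamma\circ\phi=(\psi_1,\dots,\psi_{p+1})$ satisfy
$$\int_M \psi_j\,\rho\, v_g = 0, \qquad j=1,\dots,p+1.$$

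Next, I would use each $\psi_j$ as a test function in the variational characterization \eqref{intro0} of $\mu_1(\rho,1)$:
$$\mu_1(\rho,1)\int_M \psi_j^2\,\rho\, v_g \le \int_M |\nabla \psi_j|^2 v_g.$$
Summing over $j$ and using that $\psi$ takes values in $\Sp^p$, so $\sum_j \psi_j^2 \equiv 1$ and $\sum_j |\nabla \psi_j|^2 = |d\psi|^2$, yields
$$\mu_1(\rho,1)\int_M \rho\, v_g \le \int_M |d\psi|^2 v_g,$$
that is, $\mu_1(\rho,1)\,|M|_g \le \int_M |d\psi|^2 v_g$.

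The decisive step is now to replace $\psi=\gamma\circ\phi$ by $\phi$ on the right-hand side. Since $\phi$ is harmonic with nonnegative stress-energy tensor and $\gamma$ is a conformal diffeomorphism of $\Sp^p$, Theorem 3.1 of \cite{E} gives
$$\int_M |d(\gamma\circ\phi)|^2 v_g \le \int_M |d\phi|^2 v_g.$$
Combining this with the previous bound and dividing by $|M|_g$ gives
$$\mu_1(\rho,1) \le \fint_M |d\phi|^2 v_g,$$
and taking the supremum over all admissible $\rho$ produces the claimed estimate. The only conceptual obstacle is invoking the Hersch balancing correctly and noting that Theorem 3.1 of \cite{E} does exactly what is needed; no further calculation is required, since both ingredients are already available as cited black boxes.
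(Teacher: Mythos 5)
Your proposal is correct and follows essentially the same route as the paper's proof: the Hersch-type balancing via a conformal diffeomorphism $\gamma$ of $\Sp^p$, using the components of $\psi=\gamma\circ\phi$ as test functions, summing over $j$, and then invoking Theorem 3.1 of \cite{E} to replace $\int_M\vert d(\gamma\circ\phi)\vert^2 v_g$ by $\int_M\vert d\phi\vert^2 v_g$. Your remark about excluding the constant-map case is a reasonable (and implicit in the paper) precaution for the balancing step.
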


 \begin{proof} 
 
 Let $\rho$ be a positive density on $M$. 
As before, we know that there exists ${\gamma\in Conf(\Sp^n)}$ such that $\psi=\gamma\circ \phi$ satisfies $\int_M \psi_j \rho \, v_g =0$, $j=1\dots, n+1$.
Thus
$$\mu_1( \rho,1)\int_M\psi_j^2 \rho \, v_g \le\int_M \vert \nabla\psi_j \vert ^2 v_g $$
and, summing up w.r.t. $j$, 
$$\mu_1( \rho,1)\int_M\rho \, v_g \le\int_M \vert d(\gamma\circ\phi)\vert ^2 v_g \le  \int_M \vert d\phi\vert ^2 v_g$$
which implies  \eqref{harmonicmap1}.
   \end{proof}  
   
  A  particular  case of Proposition \ref{harmonicmap} is when there exists a harmonic map $\phi:(M,g)\to\Sp^p$ which  is homothetic. In this case, $S_\phi =\frac{n-2}n \vert d\phi\vert^2 g$ and  $\vert d\phi\vert^2$ is constant  and coincides with an eigenvalue $\lambda_k(M,g)$ for some $k\ge 1$.  
  For example,  if $(M,g)$ is a compact isotropy irreducible homogeneous space (e.g. a compact rank-one symmetric space) and if $\phi_1,\dots,\phi_p$ is an $L^2$-orthonormal  basis of the first eigenspace of the Laplacian, then $\phi=\left(\frac{\vert M\vert_g}p\right)^{\frac 12}(\phi_1,\dots,\phi_p)$ is a harmonic map from $(M,g)$ to $\Sp^p$ which is homothetic and satisfies $\vert d\phi\vert^2=\lambda_1(M,g)$. Proposition \ref{harmonicmap} then implies that $\mu_1^{*}(M,g)=\lambda_1(M,g)$. On the other hand, 
   the second author and Ilias
\cite{EI} proved that  in this situation we also have $ \lambda_1^c (M,
g)= \lambda_1 (M,g)\vert M\vert_g^{\frac 2n}$. Consequently, we have the following

   \begin{cor}\label{homogene-irred} Let $(M,g)$  be a  compact isotropy irreducible homogeneous space. Then 
  $$ \lambda_1^{c}(M,g)\vert M\vert_g^{-\frac 2n}=\mu_1^{*}(M,g)=\mu_1^{**}(M,g)=\lambda_1(M,g).$$
\end{cor}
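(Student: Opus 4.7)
My plan is to assemble the four equalities from results already developed in the paper, the central ingredient being the standard ``first-eigenfunction'' map into a sphere. Let $\phi_1,\dots,\phi_p$ be an $L^2$-orthonormal basis of the $\lambda_1$-eigenspace of $(M,g)$. Because $(M,g)$ is isotropy irreducible, the isotropy representation on this eigenspace is irreducible orthogonal; the two positive functions $\sum_i\phi_i^2$ and $\sum_i|d\phi_i|^2$ are invariant under this representation (up to pointwise identification via transitivity) and hence, by Schur's lemma combined with homogeneity, must be constant on $M$. Integrating and using orthonormality fixes the constants, and one checks that $\phi=(|M|_g/p)^{1/2}(\phi_1,\dots,\phi_p)$ takes values in a unit sphere, is harmonic (since $\Delta_g\phi_i=\lambda_1\phi_i$), is homothetic with $|d\phi|^2\equiv\lambda_1(M,g)$, and has stress-energy tensor $S_\phi=\tfrac{n-2}{n}\lambda_1\,g\ge 0$.

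With $\phi$ in hand I would feed it into Proposition \ref{harmonicmap}, giving the upper bound $\mu_1^*(M,g)\le\fint_M|d\phi|^2 v_g=\lambda_1(M,g)$. The matching lower bound is trivial: choosing the admissible density $\rho\equiv 1$ in the definition of $\mu_1^*(M,g)$ yields $\mu_1^*(M,g)\ge\mu_1^g(1,1)=\lambda_1(M,g)$. Hence $\mu_1^*(M,g)=\lambda_1(M,g)$.

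For $\mu_1^{**}$ no new work is required: an isotropy irreducible homogeneous space is in particular homogeneous, so the corollary to Proposition \ref{constantenergy} (established just above in the text) gives $\mu_1^{**}(M,g)=\mu_1(M,g)=\lambda_1(M,g)$. For the conformal eigenvalue I would quote the theorem of El Soufi--Ilias \cite{EI}, which asserts precisely that on a compact isotropy irreducible homogeneous Riemannian manifold one has $\lambda_1^c(M,g)=\lambda_1(M,g)|M|_g^{2/n}$; dividing by $|M|_g^{2/n}$ closes the chain of equalities.

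The only conceptually nontrivial step is the verification that $\sum_i\phi_i^2$ and $|d\phi|^2$ are constant, which is a standard consequence of isotropy irreducibility via Schur's lemma; the rest is bookkeeping. I do not anticipate any obstacle beyond making this constancy argument precise and checking that the stress-energy tensor computation yields a nonnegative tensor so that Proposition \ref{harmonicmap} applies.
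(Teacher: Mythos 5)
Your proposal is correct and follows essentially the same route as the paper: the homothetic harmonic first-eigenfunction map into $\Sp^p$ fed into Proposition \ref{harmonicmap} for $\mu_1^*$ (with the trivial lower bound from $\rho\equiv 1$), the homogeneous-space corollary of Proposition \ref{constantenergy} for $\mu_1^{**}$, and the El Soufi--Ilias result for $\lambda_1^c$. One small precision: homogeneity alone already forces $\sum_i\phi_i^2$ and $\vert d\phi\vert^2$ to be constant, whereas Schur's lemma applied to the irreducible isotropy representation on the \emph{tangent space} is what you actually need to conclude that the full pullback tensor $\sum_i d\phi_i\otimes d\phi_i$ is proportional to $g$, i.e.\ that $\phi$ is homothetic and $S_\phi=\frac{n-2}{n}\lambda_1\,g\ge 0$.
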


\def\cprime{$'$} \def\cprime{$'$} \def\cprime{$'$}



\begin{thebibliography}{10}

\bibitem{Anne}
Colette Ann{\'e}.
\newblock Spectre du laplacien et \'ecrasement d'anses.
\newblock {\em Ann. Sci. \'Ecole Norm. Sup. (4)}, 20(2):271--280, 1987.

\bibitem{Astala}
Kari Astala and Lassi P{\"a}iv{\"a}rinta.
\newblock Calder\'on's inverse conductivity problem in the plane.
\newblock {\em Ann. of Math. (2)}, 163(1):265--299, 2006.

\bibitem{Berard1}
Pierre~H. B{\'e}rard.
\newblock {\em Spectral geometry: direct and inverse problems}, volume 1207 of
  {\em Lecture Notes in Mathematics}.
\newblock Springer-Verlag, Berlin, 1986.

\bibitem{BGM}
Marcel Berger, Paul Gauduchon, and Edmond Mazet.
\newblock {\em Le spectre d'une vari\'et\'e riemannienne}.
\newblock Lecture Notes in Mathematics, Vol. 194. Springer-Verlag, Berlin-New
  York, 1971.

\bibitem{Buser2}
Peter Buser.
\newblock On {C}heeger's inequality {$\lambda _{1}\geq h^{2}/4$}.
\newblock In {\em Geometry of the {L}aplace operator ({P}roc. {S}ympos. {P}ure
  {M}ath., {U}niv. {H}awaii, {H}onolulu, {H}awaii, 1979)}, Proc. Sympos. Pure
  Math., XXXVI, pages 29--77. Amer. Math. Soc., Providence, R.I., 1980.

\bibitem{Buser}
Peter Buser.
\newblock A note on the isoperimetric constant.
\newblock {\em Ann. Sci. \'Ecole Norm. Sup. (4)}, 15(2):213--230, 1982.

\bibitem{Chavel1}
Isaac Chavel.
\newblock {\em Eigenvalues in {R}iemannian geometry}, volume 115 of {\em Pure
  and Applied Mathematics}.
\newblock Academic Press, Inc., Orlando, FL, 1984.
\newblock Including a chapter by Burton Randol, With an appendix by Jozef
  Dodziuk.

\bibitem{cheeger}
Jeff Cheeger.
\newblock A lower bound for the smallest eigenvalue of the {L}aplacian.
\newblock In {\em Problems in analysis ({P}apers dedicated to {S}alomon
  {B}ochner, 1969)}, pages 195--199. Princeton Univ. Press, Princeton, N. J.,
  1970.

\bibitem{CE}
Bruno Colbois and Ahmad El~Soufi.
\newblock Extremal eigenvalues of the {L}aplacian in a conformal class of
  metrics: the `conformal spectrum'.
\newblock {\em Ann. Global Anal. Geom.}, 24(4):337--349, 2003.

\bibitem{CEforms}
Bruno Colbois and Ahmad El~Soufi.
\newblock Eigenvalues of the {L}aplacian acting on {$p$}-forms and metric
  conformal deformations.
\newblock {\em Proc. Amer. Math. Soc.}, 134(3):715--721 (electronic), 2006.

\bibitem{ceg1}
Bruno Colbois, Ahmad El~Soufi, and Alexandre Girouard.
\newblock Isoperimetric control of the spectrum of a compact hypersurface.
\newblock {\em J. Reine Angew. Math.}, 683:49--65, 2013.

\bibitem{CES}
Bruno Colbois, Ahmad El~Soufi, and Alessandro Savo.
\newblock Eigenvalues of the laplacian on a manifold with density.
\newblock pages 1--20, 2014.

\bibitem{cm}
Bruno Colbois and Daniel Maerten.
\newblock Eigenvalues estimate for the {N}eumann problem of a bounded domain.
\newblock {\em J. Geom. Anal.}, 18(4):1022--1032, 2008.

\bibitem{colin}
Yves Colin~de Verdi{\`e}re.
\newblock Sur la multiplicit\'e de la premi\`ere valeur propre non nulle du
  laplacien.
\newblock {\em Comment. Math. Helv.}, 61(2):254--270, 1986.

\bibitem{E}
Ahmad El~Soufi.
\newblock Applications harmoniques, immersions minimales et transformations
  conformes de la sph\`ere.
\newblock {\em Compositio Math.}, 85(3):281--298, 1993.

\bibitem{EHIS}
Ahmad El~Soufi, Evans~M. Harrell, II, Sa{\"{\i}}d Ilias, and Joachim Stubbe.
\newblock On sums of eigenvalues of elliptic operators on manifolds.
\newblock {\em Journal of Spectral Theory}.

\bibitem{EI}
Ahmad El~Soufi and Sa{\"{\i}}d Ilias.
\newblock Immersions minimales, premi\`ere valeur propre du laplacien et volume
  conforme.
\newblock {\em Math. Ann.}, 275(2):257--267, 1986.

\bibitem{EI4}
Ahmad El~Soufi and Sa{\"{\i}}d Ilias.
\newblock Extremal metrics for the first eigenvalue of the {L}aplacian in a
  conformal class.
\newblock {\em Proc. Amer. Math. Soc.}, 131(5):1611--1618 (electronic), 2003.

\bibitem{EIR}
Ahmad El~Soufi, Sa{\"{\i}}d Ilias, and Antonio Ros.
\newblock Sur la premi\`ere valeur propre des tores.
\newblock In {\em S\'eminaire de {T}h\'eorie {S}pectrale et {G}\'eom\'etrie,
  {N}o. 15, {A}nn\'ee 1996--1997}, volume~15 of {\em S\'emin. Th\'eor. Spectr.
  G\'eom.}, pages 17--23. Univ. Grenoble I, Saint-Martin-d'H\`eres, 1997.

\bibitem{GePa}
G.~Gentile and V.~Pagliara.
\newblock Riemannian metrics with large first eigenvalue on forms of degree
  {$p$}.
\newblock {\em Proc. Amer. Math. Soc.}, 123(12):3855--3858, 1995.

\bibitem{GNP}
Alexandre Girouard, Nikolai Nadirashvili, and Iosif Polterovich.
\newblock Maximization of the second positive {N}eumann eigenvalue for planar
  domains.
\newblock {\em J. Differential Geom.}, 83(3):637--661, 2009.

\bibitem{GNY}
Alexander Grigor{\cprime}yan, Yuri Netrusov, and Shing-Tung Yau.
\newblock Eigenvalues of elliptic operators and geometric applications.
\newblock In {\em Surveys in differential geometry. {V}ol. {IX}}, Surv. Differ.
  Geom., IX, pages 147--217. Int. Press, Somerville, MA, 2004.

\bibitem{asma1}
Asma Hassannezhad.
\newblock Conformal upper bounds for the eigenvalues of the {L}aplacian and
  {S}teklov problem.
\newblock {\em J. Funct. Anal.}, 261(12):3419--3436, 2011.

\bibitem{Henrot}
Antoine Henrot.
\newblock {\em Extremum problems for eigenvalues of elliptic operators}.
\newblock Frontiers in Mathematics. Birkh\"auser Verlag, Basel, 2006.

\bibitem{jammes2}
Pierre Jammes.
\newblock Prescription de la multiplicit\'e des valeurs propres du laplacien de
  {H}odge--de {R}ham.
\newblock {\em Comment. Math. Helv.}, 86(4):967--984, 2011.

\bibitem{jammes3}
Pierre Jammes.
\newblock Spectre et g\'eom\'etrie conforme des vari\'et\'es compactes \`a
  bord.
\newblock {\em Compos. Math.}, 150(12):2112--2126, 2014.

\bibitem{jammes1}
Pierre Jammes.
\newblock Une in\'egalit\'e de {C}heeger pour le spectre de {S}teklov.
\newblock {\em Ann. Inst. Fourier (Grenoble)}, 65(3):1381--1385, 2015.

\bibitem{Kokarev}
Gerasim Kokarev.
\newblock Variational aspects of {L}aplace eigenvalues on {R}iemannian
  surfaces.
\newblock {\em Adv. Math.}, 258:191--239, 2014.

\bibitem{Korevaar}
Nicholas Korevaar.
\newblock Upper bounds for eigenvalues of conformal metrics.
\newblock {\em J. Differential Geom.}, 37(1):73--93, 1993.

\bibitem{Kro}
Pawel~Kr{\"o}ger.
\newblock Upper bounds for the {N}eumann eigenvalues on a bounded domain in
  {E}uclidean space.
\newblock {\em J. Funct. Anal.}, 106(2):353--357, 1992.

\bibitem{Laugesen}
Richard~Snyder Laugesen.
\newblock Eigenvalues of the {L}aplacian on inhomogeneous membranes.
\newblock {\em Amer. J. Math.}, 120(2):305--344, 1998.

\bibitem{Milman}
Emanuel Milman.
\newblock On the role of convexity in isoperimetry, spectral gap and
  concentration.
\newblock {\em Invent. Math.}, 177(1):1--43, 2009.

\bibitem{Petrides2}
Romain Petrides.
\newblock Existence and regularity of maximal metrics for the first {L}aplace
  eigenvalue on surfaces.
\newblock {\em Geom. Funct. Anal.}, 24(4):1336--1376, 2014.

\bibitem{Petrides1}
Romain Petrides.
\newblock On a rigidity result for the first conformal eigenvalue of the
  {L}aplacian.
\newblock {\em J. Spectr. Theory}, 5(1):227--234, 2015.

\bibitem{Schoen-Yau}
Richard Schoen and Shing-Tung Yau.
\newblock {\em Lectures on differential geometry}.
\newblock Conference Proceedings and Lecture Notes in Geometry and Topology, I.
  International Press, Cambridge, MA, 1994.

\end{thebibliography}
\end{document}